\documentclass[12pt]{amsart}
\usepackage[margin= 1 in]{geometry}

\setlength{\parskip}{0.5em}
\synctex=1
\usepackage{pdfsync}

 \usepackage[plainpages=false]{hyperref}
 \usepackage{amsfonts,amsmath,amssymb,amsthm,accents,mathtools}
 \usepackage{latexsym,lscape,rawfonts,mathrsfs}



 \usepackage[all]{xy}
 \usepackage{eufrak}
\usepackage{psfrag}
 \usepackage{graphicx} 
 \usepackage{pstool}
 \usepackage{epstopdf}

 \usepackage{array,tabularx}


 \usepackage{appendix}


\usepackage{txfonts}



 \newcommand{\ba}{\begin{align}}
 \newcommand{\ea}{\end{align}}
 \newcommand{\bal}{\begin{align*}}
 \newcommand{\eal}{\end{align*}}


 \DeclareMathOperator{\diam}{diam}


 \newcommand{\Rm}{\mathbf{Rm}}
 \newcommand{\Rc}{\mathbf{Rc}}

 \newcommand{\dvol}{\text{d}V}

\renewcommand{\epsilon}{\varepsilon}

\newcommand{\rank}{\textbf{rank}}

 \makeatletter
 \def\ExtendSymbol#1#2#3#4#5{\ext@arrow 0099{\arrowfill@#1#2#3}{#4}{#5}}
 
 \makeatother

 \makeatletter
 \def\ExtendSymbol#1#2#3#4#5{\ext@arrow 0099{\arrowfill@#1#2#3}{#4}{#5}}
 
 \makeatother

 \definecolor{hao}{rgb}{1,0.5,0}
 \definecolor{miao}{cmyk}{0.5,0,0.2,0.2}
 \definecolor{qiao}{gray}{0.96}


\newtheorem{prop}{Proposition}[section]

\newtheorem{proposition}[prop]{Proposition}

\newtheorem{theorem}[prop]{Theorem}

\newtheorem{lemma}[prop]{Lemma}

\newtheorem{corollary}[prop]{Corollary}
\newtheorem{remarkin}[prop]{Remark}

\newtheorem*{theorem*}{Theorem}
\theoremstyle{remark}
\newtheorem{remark}{Remark}

\numberwithin{equation}{section}

\keywords{Betti number, collapsing, torus fibration, nilpotency rank, Ricci
flow.}

\address{Shaosai Huang, Two Morneau Shepell Centre Suite 915, 895 Don Mills Road, Toronto, ON, M3C 1W3, Canada}
\email{arthur.ut.ca@gmail.com}

\address{Bing Wang, Institute of Geometry and Physics, and School of
Mathematical Sciences, University of Science and Technology of China, 96
Jinzhai Road, Hefei, Anhui Province, 230026, China}
\email{topspin@ustc.edu.cn}

\title[Rigidity of the first Betti number]{Rigidity of the first
Betti number via Ricci flow smoothing}
\author{Shaosai Huang}
\author{Bing Wang}
\date{\today}

\begin{document}
\maketitle

\begin{abstract}
The Colding-Gromov gap theorem asserts that an almost non-negatively Ricci 
curved manifold with unit diameter and maximal first Betti number is
homeomorphic to the flat torus. In this paper, we prove a parametrized version
of this theorem, in the context of collapsing Riemannian manifolds with Ricci
curvature bounded below: if a closed manifold with Ricci curvature uniformly 
bounded below is Gromov-Hausdorff close to a (lower dimensional) manifold
with bounded geometry, and has the difference of their first Betti numbers 
equal to the dimensional difference, then it is diffeomorphic to a torus bundle
over the one with bounded geometry. We rely on two novel technical tools: the
first is an effective control of the spreading of minimal geodesics with initial
data parallel transported along a short geodesic segment, and the second is a
Ricci flow smoothing result for certain collapsing initial data with Ricci
curvature bounded below.
\end{abstract}

\tableofcontents

\section{Introduction}
\addtocontents{toc}{\protect\setcounter{tocdepth}{1}}

The classical Bochner technique (see \cite{Bochner, BY}) implies that a
closed Riemannian manifold with non-negative Ricci curvature has its first
Betti number bounded above by its dimension, with the equality case only
achieved by the flat torus. This is a rarely found topological rigidity theorem 
for Riemannian manifolds with Ricci curvature bounded below.

After more than three decades since the birth of Bochner's technique, Gromov
\cite[Page 75]{GLP} conjectured a quantitative gap phenomenon, expecting the
existence of a small dimensional constant $\delta_{G}>0$, such that if the
Ricci curvature of a closed Riemannian manifold (with unit diameter) has its
lowest eigenvalue bounded below by $-\delta_{G}$, then the first Betti number
does not exceed the dimension, while equality warrants the toral structure of
the manifold. This conjecture was later proven by Colding \cite{Colding97}
based on his renowned volume continuity theorem; see also \cite{ChCoI}.

The Colding-Gromov gap theorem is akin to Gromov's almost flat manifold theorem
(see \cite{Gromov78b, Ruh}), the latter asserting the existence of a
dimensional gap $\delta_{AF}>0$ such that if a closed Riemannian manifold
(with unit diameter) has sectional curvature bounded by $\delta_{AF}$ in
absolute value, then it is diffeomorphic to an infranil manifold.

While the almost flat manifold theorem is beautiful it only detects a  
special class of manifolds; it is the study of the collapsing geometry with
bounded curvature by Cheeger, Fukaya and Gromov (see \cite{CGI, CGII,
Fukaya87ld, Fukaya88, CFG92}) that fits this theorem in a much broader context
--- as suggested by \cite[Main Theorem]{Fukaya87ld}, if a Riemannian manifold
with uniformly bounded sectional curvature is Gromov-Hausdorff close
(\emph{collapsing}) to a lower dimensional one with bounded geometry, then it is
a fiber bundle over the lower dimensional manifold, with fibers being infranil
manifolds. Putting it another way, one could think of the collapsing manifold
as a collection of infranil manifolds smoothly parametrized by the collapsing
limit space (assumed to be a lower dimensional manifold).

The purpose of the current paper is then to report a parametrized version of
the Colding-Gromov gap theorem, describing the collapsing behavior of
certain Riemannian manifolds with Ricci curvature bounded below. 
 Before stating our theorem, let us fix some notations. We let
 $\mathcal{M}_{Rc}(m)$ denote the collection of $m$-dimensional complete
 Riemannian manifolds $(M,g)$ with $\Rc_g\ge-(m-1)g$. We also let
 $\mathcal{M}_{Rm}(k,D,v)$ denote the collection of $k$-dimensional closed
 Riemannian manifolds with sectional curvature at any point not exceeding $1$
 in absolute value, diameter bounded above by $D\ge 1$ and volume bounded below
 by $v>0$.  

With these notations, our main result states as
\begin{theorem}[Rigidity of the first Betti number]\label{thm: main1}
Given the data $m\in \mathbb{N}$, $D\ge 1$ and $v>0$, there is a
constant $\delta_{B}(m,D,v)\in (0,1)$ such that if for some $(M,g)\in
\mathcal{M}_{Rc}(m)$ and some $(N,h)\in \mathcal{M}_{Rm}(k,D,v)$
(with $k\le m$) it holds $d_{GH}(M,N)<\delta_B$, then
\begin{enumerate}
  \item $b_1(M)-b_1(N)\le m-k$; and 
  \item if the equality holds, then $M$ is diffeomorphic to an $(m-k)$-torus
  bundle over $N$.
\end{enumerate}
\end{theorem}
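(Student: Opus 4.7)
The strategy is to combine the two technical tools announced in the abstract with the Cheeger--Fukaya--Gromov theory of collapsing with bounded sectional curvature. The first step is to apply the Ricci flow smoothing to the initial data $(M,g)$: at some small time $t_0 = t_0(m,D,v)>0$, the flow produces a metric $g(t_0)$ on $M$ enjoying uniform two-sided sectional curvature bounds (and bounds on all higher covariant derivatives of curvature), while the Gromov--Hausdorff distance $d_{GH}((M,g(t_0)),(N,h))$ remains controlled by a universal multiple of $\delta_B$. The parallel transport / geodesic spreading control supplies the key geometric input enabling this curvature regularization starting from only a Ricci lower bound in the collapsed regime.

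With bounded curvature in hand, Fukaya's fibration theorem applies: for $\delta_B$ small enough, the smoothed $(M, g(t_0))$ admits a smooth fiber bundle structure
\[ F \hookrightarrow M \xrightarrow{\pi} N', \]
with $N'$ diffeomorphic to $N$ (via a map close to the Gromov--Hausdorff approximation) and with infranilmanifold fiber $F$ of dimension $m-k$. Since the fibration depends only on the underlying smooth structure of $M$, it carries over to the original $(M,g)$.

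Part (1) then follows from the five-term exact sequence of the Leray--Serre spectral sequence for $\pi$:
\[ 0 \longrightarrow H^1(N';\R) \longrightarrow H^1(M;\R) \longrightarrow H^1(F;\R)^{\pi_1(N')} \longrightarrow H^2(N';\R), \]
which yields $b_1(M) \le b_1(N) + b_1(F) \le b_1(N) + (m-k)$, the last inequality being Bochner's bound applied to any almost-flat metric on the infranil $F$. For Part (2), equality forces $b_1(F) = \dim F$; a standard transfer argument combined with Malcev's structure theory for virtually nilpotent groups then shows that the universal cover of $F$ is abelian and the deck transformations act by translations, so $F$ must in fact be a flat $(m-k)$-torus. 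Consequently $\pi : M \to N$ is the asserted torus bundle.

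\textbf{Main obstacle.} The hardest ingredient is the Ricci flow smoothing in the first step. Under only a Ricci lower bound, the standard Perelman pseudolocality theorem and its descendants do not apply in the collapsed regime, since the local non-collapsing input they require is exactly what is violated here. The crux is therefore to develop a smoothing theorem that \emph{exploits} rather than resists the proximity to the lower dimensional bounded-geometry manifold $N$, with the parallel transport / geodesic spreading control serving as the geometric substitute for the missing non-collapsing hypothesis.
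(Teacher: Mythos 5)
Your plan inverts the logical structure of the argument in a way that cannot be repaired. You propose to smooth $(M,g)$ by Ricci flow into a metric with uniform two-sided curvature bounds using only the hypotheses $\Rc_g\ge-(m-1)g$ and $d_{GH}(M,N)<\delta_B$, and then to read off both claims from a Fukaya fibration. But no such smoothing theorem is available under these hypotheses alone: in the collapsed regime the universal cover of $M$ may itself be collapsed, pseudolocality has no almost-Euclidean balls to act on, and in fact the claim is false as stated --- Ricci-flat collapsing families (e.g.\ the hyperk\"ahler metrics on $K3$ collapsing with unbounded curvature, as in \cite{GW00, HSVZ18}) are static under Ricci flow, so running the flow for a definite time $t_0(m,D,v)$ cannot produce any uniform curvature bound. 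This is precisely why the smoothing result in the paper (Theorem~\ref{thm: main2}) carries the \emph{additional} hypothesis $b_1(M)-b_1(N)=\dim M-\dim N$: that equality is what forces the pseudo-local fundamental groups to have maximal nilpotency rank $m-k$, which by Naber--Zhang makes the universal covering space uniformly non-collapsed (Lemma~\ref{lem: almost_Euclidean}), and only then can pseudolocality be applied to the covering flow. Since your proof of Part (1) is routed through the fibration obtained from the smoothing, and the smoothing is only legitimate once Part (1) and the equality case are already in hand, the proposal is circular at its core; the ``geodesic spreading control'' cannot serve as a substitute for non-collapsing in the way you assert --- in the paper it plays a different role entirely.

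Concretely, Part (1) must be (and in the paper is) proved without any Ricci flow: one introduces the subgroup $H_1^{\delta}(M;\mathbb{Z})$ of homology classes generated by loops of length $\le 10\delta$, shows $\rank\ H_1^{\delta}(M;\mathbb{Z})=b_1(M)-b_1(N)$ by comparing approximating loops in $N$ (Proposition~\ref{prop: rank}), realizes the torsion-free generators by closed geodesics that lift to lines in $\widetilde{M}$, and uses the effective spreading estimate (Theorem~\ref{thm: main3}) together with \cite[Lemma 5.2]{NaberZhang} to slide these loops to an arbitrary point $p$, producing independent torsion-free elements of $\tilde{\Gamma}_{\delta_{Nil}}(p)$, whose rank is bounded by $m-k$ by \cite[Theorem 4.25]{NaberZhang}. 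Your homological bookkeeping in the second half (the five-term exact sequence giving $b_1(M)\le b_1(N)+b_1(F)$, and the identification of the fiber as a torus when $b_1(F)=\dim F$) is fine \emph{granted} a smooth infranil fibration, and is close in spirit to how the paper finishes the equality case after the smoothing and \cite{CFG92, HKRX18} are applied; but without a correct route to that fibration in the equality case (and an independent argument for Part (1)), the proposal has a genuine gap.
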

\begin{remark}
Of course, $(N,h)\in \mathcal{M}_{Rm}(k,D,v)$ is just one way to describe
that $(N,h)$ has ``bounded geometry''. Alternative descriptions include
assuming that $\diam (N,h)\le D$, $\Rc_h\ge -(k-1)h$ and the $C^{1,\frac{1}{2}}$
harmonic radii at all points of $N$ are bounded below by $\iota\in (0,1)$ ---
in fact, $\delta_B(m,D,v)$ directly depends on the $C^{1,\frac{1}{2}}$ harmonic
radii lower bound, obtained in \cite{JK82} for manifolds in
$\mathcal{M}_{Rm}(k,D,v)$.
\end{remark}

While the collapsing phenomena of sequences of Riemannian manifolds with
bounded sectional curvature is well-understood thanks to the works of Cheeger,
Fukaya, Gromov and Rong \cite{CGI, CGII, Fukaya87ld, Fukaya88, CFG92, Rong93,
CR95, CR96}, the behavior of metrics when collapsing with only Ricci curvature
lower bound is much more complicated and much less understood. For instance,
even when a sequence of Ricci flat manifolds collapse to very regular limit
spaces, there may be no uniform curvature bound for the collapsing sequence, as
shown by examples in \cite{GW00, HSVZ18, LiYang17}. Beyond these recently
discovered examples, Theorem~\ref{thm: main1} provides a definite result that
helps us better understand the collapsing geometry with only Ricci curvature
bounded below. The strength of Theorem~\ref{thm: main1} lies in the fact that
 while the assumption on the first Betti numbers is only numerical, the
outcome provides a much more detailed structural description.

The torus fiber bundle structure predicted by Theorem~\ref{thm: main1} is even
simpler than the infranil fibration structure expected from the general theory
of collapsing geometry with bounded sectional curvature (see \cite{Fukaya87ld,
Fukaya88, CFG92}) --- it is the assumption on the first Betti numbers that
drastically reduces the topological complexity. We believe that the methods in
proving Theorem~\ref{thm: main1}, when further localized, should shed some
light on our understanding of the collapsing geometry of Ricci flat K\"ahler
manifolds, especially the SYZ conjecture \cite{SYZ96}.

We notice that the equality case in Claim (2) of Theorem~\ref{thm: main1} does
not apply to Berger's sphere, as $b_1(\mathbb{S}^3)=b_1(\mathbb{S}^2)=0$ --- in
fact, when $M$ is almost non-negatively Ricci curved, we expect that $M\cong
N\times \mathbb{T}^{m-k}$ in the equality case of Theorem~\ref{thm: main1}; compare Theorem~\ref{thm: rigidity_Rc_ge0}. On the other hand, examples of manifolds satisfying Claim (2) of Theorem~\ref{thm: main1} include, but are not limited to, non-positively curved compact manifolds that collapse to lower dimensional manifolds, as discussed by Cao, Cheeger and Rong in \cite{CCR}.

When $b_1(M)-b_1(N)=\dim M-\dim N$ in Theorem~\ref{thm: main1}, since $N$ is a smooth manifold, $M$ admits a pure $Cr$-structure of rank $m-k$, which is a special type of $F$-structure \emph{a la} Cheeger and Gromov \cite{CGI, CGII}; see \cite{Buyalo1, Buyalo2, Buyalo3} and \cite[Section 4]{CCR} for the definition of $Cr$-structure. It is easily seen that we
could construct an invariant metric with respect to such structure; see also
\cite{Molino88}:
\begin{corollary}
In the equality case of Theorem~\ref{thm: main1}, on $M$ there is a Riemannian
metric $g'$ which defines a distance function close to the original one induced
by $g$, and a regular Riemannian foliation on $(M,g')$ with leaves generated by
$m-k$ commuting Killing vector fields. Moreover, by shrinking $g'$ on the leaf
directions, there admits a family of Riemannian metrics on $M$ that (volume)
collapse with uniformly bounded diameter and sectional curvature. 
\end{corollary}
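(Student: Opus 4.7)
The plan is to leverage the pure $Cr$-structure of rank $m-k$ guaranteed by Theorem~\ref{thm: main1}(2) and the discussion preceding the corollary. Such a structure provides a compatible system of local $T^{m-k}$-actions on an open cover of $M$, with overlap maps intertwining those actions. First I would argue that the numerical equality $b_1(M)-b_1(N)=m-k$ forces the monodromy of the torus bundle to act trivially on $H_1$ of the fiber, promoting the local $T^{m-k}$-actions to a single global free $T^{m-k}$-action on $M$ whose orbits are exactly the torus fibers.

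Given such a global action, define $g'$ by averaging: for tangent vectors $v,w$ at $x\in M$, set
\[
g'_x(v,w):=\int_{T^{m-k}} g_{t\cdot x}(t_*v,t_*w)\,\mathrm{d}t,
\]
with $\mathrm{d}t$ the normalized Haar measure. Then $g'$ is $T^{m-k}$-invariant by construction. Since $d_{GH}((M,g),(N,h))<\delta_B$, the orbits of the $T^{m-k}$-action have diameter of order $\delta_B$, so the averaging distorts pointwise inner products by $O(\delta_B)$ and hence distance functions by the same order, yielding the first assertion. The $m-k$ infinitesimal generators of the $T^{m-k}$-action are then Killing for $g'$ and pairwise commuting since $T^{m-k}$ is abelian; they span the tangent distribution to the orbits, producing the advertised regular Riemannian foliation.

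For the second assertion, decompose $g'=g'_H+g'_V$ into horizontal and vertical components with respect to this foliation (the horizontal being the $g'$-orthogonal complement of the leaves), and set $g'_\epsilon:=g'_H+\epsilon^2 g'_V$ for $\epsilon\in(0,1]$. The quotient map $\pi:(M,g')\to (N,\bar g)$ is then a Riemannian submersion with totally geodesic fibers, and each fiber is a flat torus because the leaves are orbits of commuting Killing vector fields. By O'Neill's formulas, the sectional curvatures of $g'_\epsilon$ are controlled in terms of those of $\bar g$ and the $A$-tensor of the submersion, uniformly in $\epsilon$; the diameter stays bounded by $\diam(N,\bar g)+O(\epsilon)$, while the volume collapses like $\epsilon^{m-k}$.

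The main obstacle is the first step: extracting a \emph{global} $T^{m-k}$-action from the topological torus bundle structure of Theorem~\ref{thm: main1}(2). The $Cr$-structure a priori supplies only local actions, and the overlap maps could twist these through nontrivial elements of $\mathrm{Aut}(T^{m-k})=GL(m-k,\mathbb{Z})$. The saving grace is that any such nontrivial monodromy would strictly decrease $b_1$ of the total space below $b_1(N)+(m-k)$, contradicting the equality assumption; carrying out this homological argument carefully, and then invoking the classification of principal torus bundles, is where the real work of the corollary lies.
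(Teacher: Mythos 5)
Your plan has a genuine gap at the averaging step. You define $g'$ by averaging $g$ over the torus action and assert that, because the orbits have diameter $O(\delta_B)$, the averaging ``distorts pointwise inner products by $O(\delta_B)$.'' Small orbits only control the displacement $d_g(x,t\cdot x)$; they give no control whatsoever on the differentials $t_*$ of the action maps, so the pulled-back metrics $t^{\ast}g$ need not be pointwise comparable to $g$, and the length (hence distance) of curves in the averaged metric can be distorted by an arbitrarily large amount. This cannot be repaired while working with $g$ alone: $g$ has only a Ricci lower bound, and the torus-bundle structure of Theorem~\ref{thm: main1}(2) is produced for the Ricci-flow-smoothed metric $g(T_B)$ of Theorem~\ref{thm: main2}, not for $g$; an abstractly chosen smooth principal structure on that bundle carries no geometric bounds relative to $g$. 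The way the paper intends this to go is through the bounded-geometry metric $g(T_B)$: the fibration (equivalently the pure $Cr$-/$F$-structure) comes with controlled geometry for $g(T_B)$ via \cite{CFG92, HKRX18}, the invariant metric close to $g(T_B)$ is supplied by the Cheeger--Fukaya--Gromov/Molino machinery \cite{CGI, CGII, Molino88}, and closeness of the induced distance to $d_g$ then follows from the distance distortion estimate of Lemma~\ref{lem: dis_dis}. Your averaging formula is fine in spirit, but it must be applied to $g(T_B)$ together with the derivative control on the local actions that this theory provides.

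Two smaller points. First, your reduction to a global $T^{m-k}$-action: the homological computation (nontrivial monodromy on $H_1(T^{m-k})$ forces the rank of the coinvariants, hence $b_1(M)-b_1(N)$, below $m-k$) is correct, but passing from ``trivial monodromy on $H_1$'' to ``principal bundle'' is not pure bundle theory, since $\mathrm{Diff}(T^{m-k})$ is not homotopy equivalent to the affine group in higher dimensions; you need the affine (infranil) structure group furnished by \cite{CFG92, HKRX18}, which is exactly the step you flag but do not carry out. (The paper itself avoids a global action and works with the $Cr$-structure, for which local actions suffice.) Second, the fibers of the submersion $(M,g')\to N$ are not totally geodesic in general; what makes the canonical variation $g'_H+\epsilon^2 g'_V$ have curvature bounded uniformly as $\epsilon\to 0$ is that the orbits of an isometric torus action are intrinsically flat, together with compactness bounds on the O'Neill tensors --- this part of your argument is salvageable once the totally geodesic claim is dropped.
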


A basic concept in studying the local geometry of Riemannian manifolds in
$\mathcal{M}_{Rc}(m)$, as discussed in \cite{FY92, KW11, NaberZhang}, is the
fibered fundamental group, which takes into consideration those very short
loops based at a given point, and allowed to be deformed in a definite geodesic
ball centered at that point. More precisely, given $(M,g)\in
\mathcal{M}_{Rc}(m)$, for any $\delta\in (0,1)$ and any $p\in M$, the
\emph{fibered fundamental group} at $p$ is defined as
\begin{align*}
\Gamma_{\delta}(p)\ :=\ Image[\pi_1(B_g(p,\delta),p)\to \pi_1(B_g(p,2),p)].
\end{align*}
For suitably small $\delta$, it is known, through the work of Kapovitch and
Wilking (\cite[Theorem 1]{KW11}), that $\Gamma_{\delta}(p)$ is an almost
nilpotent group with nilpotency rank bounded above by $m=\dim M$.
In the setting of Theorem~\ref{thm: main1},  $M$ is
$\delta$-Gromov-Hausdorff close to some $(N,h)\in \mathcal{M}_{Rm}(k,D,v)$,
and the work of Naber and Zhang \cite{NaberZhang} provides more information: 
by \cite[Theorem 2.27]{NaberZhang} we know that $\rank\ \Gamma_{\delta}(p)\le
\dim M-\dim N$ when $\delta>0$ is sufficiently small, and \cite[Proposition
5.9]{NaberZhang} tells that when the equality holds, the universal covering
space of $B_g(p,2)$ is uniformly non-collapsing.

From this point of view, Theorem~\ref{thm: main1} could also be seen as a global
version of the above mentioned results on the fibered fundamental groups, in a
more natural situation --- notice that the conditions on the fibered fundamental
groups are purely local, and could hardly be checked at each and every
single point, whereas our considerations on the first Betti numbers in
Theorem~\ref{thm: main1} are global and topological. In fact, much of our effort
is devoted to ``localizing'' the information encoded in the first Betti numbers
to control the nilpotency rank of the fibered fundamental groups.

This ``localization'' is carried out by first locating those very short loops in
$M$. We collect all the first homology classes
that could be generated by loops of lengths not exceeding $10\delta$ in
$H_1^{\delta}(M;\mathbb{Z})$, which clearly is a subgroup of the abelian group
$H_1(M;\mathbb{Z})$, and we will show that $b_1(M)-b_1(N)=\rank\
H_1^{\delta}(M;\mathbb{Z})$ in Proposition~\ref{prop: rank}. Notice that if
$\gamma'$ is a geodesic loop based at some $p_0\in M$, representing a
torsion-free generator of $H_1^{\delta}(M;\mathbb{Z})$ with $|\gamma'|\le
10\delta$, we could perturb it in its \emph{free homotopy class} to find a
shortest representative $\gamma:[0,1]\to M$ --- this does not alter the
homology class of $\gamma'$ although in general $\gamma$ may no longer be a
loop passing through $p_0\in M$. The advantage of $\gamma$ is that it is a
\emph{closed geodesic}, rather than just being a geodesic loop. In the second
step, we will basically show that for $\delta$ sufficiently small, if we slide
$\gamma$ along a minimal geodesic initially perpendicular to $\gamma$, it will then 
end up with being a geodesic loop of length comparable to $\delta$. In this
way, if $\gamma$ generates a torsion-free class in $H_1^{\delta}(M;\mathbb{Z})$, then
sliding it to another point $p\in M\backslash \gamma([0,1])$ will produce a
loop contained in $B_g(p,\bar{C}(m,D)\delta)$, with $D\ge \diam M$ and $m=\dim
M$. Making $\delta$ sufficiently small, we could make sure that any torsion-free
class in $H_1^{\delta}(M;\mathbb{Z})$ defines a torsion-free homotopy class in
$\tilde{\Gamma}_{\bar{C}\delta}(p)$, therefore bounding $\rank\
\tilde{\Gamma}_{\bar{C}\delta}(p)$ from below by $\rank\
H_1^{\delta}(M;\mathbb{Z})$, which is shown to be equal to $b_1(M)-b_1(N)$ ---
here $\tilde{\Gamma}_{\delta}(p)$ denotes the \emph{pseudo-local fundamental
group}, which is defined for any $\delta\in (0,1)$ and $p\in M$ as
 \begin{align*}
 \tilde{\Gamma}_{\delta}(p)\ :=\ Image[\pi_1(B_g(p,\delta))\to \pi_1(M,p)].
 \end{align*}
Roughly speaking, this group considers those very short loops based at the
given point, but are allowed to deform within the entire manifold. The
pseudo-local fundamental group is an intermediate concept that interpolates
between the $\delta$-small first homology classes $H_1^{\delta}(M;\mathbb{Z})$,
which is entirely global, and the purely local concept $\Gamma_{\delta}(p)$. In
Lemma~\ref{lem: nil_rank}, we will check that under the assumption of
Theorem~\ref{thm: main1}, each $\tilde{\Gamma}_{\delta}(p)$ is almost nilpotent
with $\rank\ \tilde{\Gamma}_{\delta}(p)\le \dim M-\dim N$, as long as $\delta$
is sufficiently small. This will lead to the first claim in Theorem~\ref{thm:
main1}.

We now present our first major technical input, which is an effective control
of the geodesic spreading. To set up the context, for $(M,g)\in
\mathcal{M}_{Rc}(m)$ and $\Sigma\subset M$ a closed embedded submanifold, we
let $r_{\Sigma}:M\to \mathbb{R}$ denote the distance function to $\Sigma$. This
is a Lipschitz function and is almost everywhere smooth (see \S 4.1). It
defines a smooth vector field $\nabla r_{\Sigma}$ almost everywhere on $M$.
Notice that any minimal geodesic realizing the distance between a point and
$\Sigma$ is an integral curve of $\nabla r_{\Sigma}$ with initial value in
$T^{\perp}\Sigma$, the normal bundle of $\Sigma\subset M$. We now state
\begin{theorem}\label{thm: main3}
For any positive numbers $D\ge 1$, $\beta<10^{-2}$ and $m\in \mathbb{N}$, there
are constants  $\bar{r}\in (0,1)$ and $\bar{C}>1$ solely determined by
$m$, $D$, and $\beta$, to the following effect: let $(M,g)\in
\mathcal{M}_{Rc}(m)$ and let $\Sigma\subset M$ be a closed embedded submanifold,
and let $\sigma_0,\sigma_1:[0,l]\to M$ ($\frac{1}{4}\le l\le D$) be two minimal
geodesics of unit speed that are also integral curves of $\nabla r_{\Sigma}$
with $\sigma_0(0),\sigma_1(0)\in \Sigma$, if $d_g(\sigma_0(\beta
l),\sigma_1(\beta l))\le\bar{r}$, then we have
\begin{align}
\forall t\in [\beta l,(1-\beta)l],\quad
d_g\left(\sigma_0(t), \sigma_1(t)\right)\ \le\ \bar{C} d_g\left(\sigma_0(\beta
l),\sigma_1(\beta l)\right).
\end{align} 
\end{theorem}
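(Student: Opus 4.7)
The argument proceeds by contradiction and a blow-up analysis, leveraging the Cheeger--Colding almost-splitting theorem together with Colding--Naber's parabolic regularization of distance functions. Assume the conclusion fails for some admissible $(m, D, \beta)$. Extracting a subsequence, we obtain data $(M_i, g_i, \Sigma_i, \sigma_{0,i}, \sigma_{1,i}, l_i)$ with $(M_i, g_i) \in \mathcal{M}_{Rc}(m)$, $l_i \in [\tfrac14, D]$, such that
\[
r_i := d_{g_i}\bigl(\sigma_{0,i}(\beta l_i), \sigma_{1,i}(\beta l_i)\bigr) \longrightarrow 0,
\]
while $d_{g_i}(\sigma_{0,i}(t_i), \sigma_{1,i}(t_i)) \geq C_i r_i$ with $C_i \to \infty$ at some times $t_i \in [\beta l_i, (1-\beta) l_i]$. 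Rescale $\tilde g_i := r_i^{-2} g_i$: then $\Rc_{\tilde g_i} \geq -(m-1) r_i^2 \tilde g_i \to 0$, the reparametrized curves $\tilde\sigma_{j,i}(\tau) := \sigma_{j,i}(r_i \tau)$ remain unit-speed minimal geodesics of length $l_i/r_i \to \infty$, are unit-speed integral curves of $\nabla \tilde r_{\Sigma_i}$ for the rescaled distance $\tilde r_{\Sigma_i} := r_i^{-1} r_{\Sigma_i}$, and the 2-Lipschitz function $\tilde f_i(\tau) := d_{\tilde g_i}(\tilde\sigma_{0,i}(\tau), \tilde\sigma_{1,i}(\tau))$ satisfies $\tilde f_i(\beta l_i/r_i) = 1$ while $\tilde f_i(t_i/r_i) \geq C_i \to \infty$.

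Cover the interval $[\beta l_i/r_i, (1-\beta) l_i/r_i]$ by a bounded number $J = J(\beta)$ of overlapping balls centered at points $p_i^{(j)} := \tilde\sigma_{0,i}(\tau_i^{(j)})$. At each center the segment $\tilde\sigma_{0,i}$ extends a distance $\geq \beta l_i/r_i \to \infty$ in both directions, so by diagonal extraction and the Cheeger--Colding almost-splitting theorem,
\[
(M_i, \tilde g_i, p_i^{(j)}) \longrightarrow \bigl(\mathbb{R} \times X^{(j)}, (0, x_0^{(j)})\bigr)
\]
in the pointed Gromov--Hausdorff sense, with $\tilde\sigma_{0,i}$ converging to the line $\tau \mapsto (\tau - \tau_i^{(j)}, x_0^{(j)})$. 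The shifted function $\tilde r_{\Sigma_i} - \tau_i^{(j)}$ is 1-Lipschitz, has $|\nabla \tilde r_{\Sigma_i}|_{\tilde g_i} = 1$ almost everywhere, and coincides with the line parameter along $\tilde\sigma_{0,i}$; by the rigidity of splitting functions (a consequence of the Cheeger--Colding generalized Bochner identity), its Gromov--Hausdorff limit equals the projection onto the $\mathbb{R}$-factor. The integral curve $\tilde\sigma_{1,i}$ therefore converges on the $j$-th ball to an $\mathbb{R}$-direction line $\tau \mapsto (\tau - \tau_i^{(j)}, x_1^{(j)})$ at some $x_1^{(j)} \in X^{(j)}$.

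Consequently, $\tilde f_i(\tau) \to d_{X^{(j)}}(x_0^{(j)}, x_1^{(j)})$ pointwise on each ball, and the overlapping chain forces these limit constants to coincide and equal $A^{(0)} = \lim_i \tilde f_i(\beta l_i/r_i) = 1$. We conclude
\[
\tilde f_i(\tau) = 1 + o(1) \qquad \text{uniformly on }[\beta l_i/r_i, (1-\beta) l_i/r_i],
\]
directly contradicting $\tilde f_i(t_i/r_i) \geq C_i \to \infty$. Unwinding the rescaling yields the claimed bound with $\bar C = \bar C(m, D, \beta)$.

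The principal technical difficulty is twofold: (i) verifying that the integral curves of the merely Lipschitz function $\tilde r_{\Sigma_i}$ pass correctly to the limit as $\mathbb{R}$-direction lines in the splitting --- this requires replacing $\tilde r_{\Sigma_i}$ by its parabolic regularization (in the spirit of Colding--Naber), whose Hessian has small $L^2$-norm under almost-non-negative Ricci, so that the smoothed gradient flow stays close to the actual minimizing geodesics $\tilde \sigma_{j,i}$; and (ii) uniformly chaining the almost-constancy across the $J \sim J(\beta)$ overlapping splittings while each ball has radius comparable to $\beta l_i/r_i \to \infty$. Issue (ii) is handled by a diagonal extraction combined with uniformly quantifying the Cheeger--Colding error $\Psi((m-1) r_i^2, r_i/l_i \mid m) \to 0$ across the finite chain.
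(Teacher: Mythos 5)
Your blow-up strategy has a genuine gap at its core: a scale mismatch that the almost-splitting theorem cannot bridge. After rescaling by $r_i^{-2}$, the separation you need to control is of size $\approx 1$ (or $C_i$), but the window $[\beta l_i/r_i,(1-\beta)l_i/r_i]$ has length $\to\infty$. The Cheeger--Colding almost-splitting theorem gives Gromov--Hausdorff closeness to $\mathbb{R}\times X$ on a ball of radius $R$ only up to an error of the form $\Psi\cdot R$, i.e.\ small \emph{relative to the scale of the ball}. If you take the $J(\beta)$ balls of your ``bounded chain'', their radii must be comparable to $\beta l_i/r_i\to\infty$, so the splitting error, while small relative to the radius, is enormous compared to the unit separation of the two geodesics, and the conclusion ``$\tilde f_i(\tau)=1+o(1)$'' simply does not follow from closeness to a product at that scale. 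If instead you use balls of a fixed radius $R$ (so the error tends to $0$ in absolute terms), the number of balls in the chain is $\sim l_i/(Rr_i)\to\infty$ and the per-step errors accumulate without bound, so again no contradiction is reached; pointwise convergence on each fixed ball gives no uniformity over a window whose length blows up. There is also a circularity in the intermediate step: to say that $\tilde\sigma_{1,i}$ converges on the $j$-th ball to a vertical line you must already know it stays within bounded distance of $\tilde\sigma_{0,i}$ there, which is essentially the statement being proved. (Your identification of the limit of $\tilde r_{\Sigma_i}$ with the splitting coordinate is fine --- it follows from a two-sided triangle-inequality argument along the limit line and needs no Bochner input --- but it does not rescue the propagation step.)

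This scale-decoupling is exactly the difficulty that Colding--Naber's work, and the paper's proof, are designed to overcome: one cannot obtain the multiplicative constant $\bar C$ by soft compactness, because the estimate must hold at a scale $r$ arbitrarily small compared to the length over which it is propagated. The paper instead proves a Laplacian comparison for $r_\Sigma$, builds the parabolic approximation $h_{\varepsilon^2 l^2}$ with $L^2$ Hessian bounds along the geodesic (Proposition~\ref{prop: approx_est}), uses the segment inequality and interior volume comparison to produce almost-full-measure good sets $T^r_{\eta,s}(\sigma)$ whose gradient-flow images track $\sigma$ with an explicit $r\sqrt{s/l}$ error (Lemmas~\ref{lem: distance_estimate} and~\ref{lem: big_A}), and then iterates this effective estimate a definite number $\lceil 2\bar\varepsilon_0^{-1}\rceil$ of times, each step advancing a fixed fraction of $l$ and losing only a fixed multiplicative factor. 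Some quantitative mechanism of this kind is unavoidable; your proposal, as written, does not supply it.
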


In the application, if $\gamma: [0,1]\to M$ is a closed geodesic generating a
torsion-free class in $H_1^{\delta}(M;\mathbb{Z})$ with $|\gamma|\le 10\delta$,
then it lifts to the universal covering $\widetilde{M}$ (equipped with the
covering metric) and becomes a complete geodesic $\tilde{\gamma}:\mathbb{R}\to
\widetilde{M}$. Regarding $\gamma$ as an isometric action on $\widetilde{M}$, we
understand that bounding the size of $\gamma$ slided along a minimal geodesic
$\sigma$ realizing $d_g(p,\gamma([0,1]))$ amounts to estimating the distance
between the two lifted minimal geodesics $\sigma_0=\tilde{\sigma}$ and
$\sigma_1=\gamma.\tilde{\sigma}$ in $\widetilde{M}$ --- here we notice that
$\tilde{\gamma}(\mathbb{R})\subset \widetilde{M}$ is a closed embedded
smooth submanifold and that both $\sigma_0$ and $\sigma_1$ are integral curves of
$\nabla r_{\tilde{\gamma}(\mathbb{R})}$ --- Theorem~\ref{thm: main3}
applies to the pair $(\widetilde{M}, \tilde{\gamma}(\mathbb{R}))$; see
Figure~\ref{fig:threemoduli}.

\begin{figure}
 \begin{center}
 \psfrag{A}[c][c]{$\widetilde{M}$}
 \psfrag{B}[c][c]{$M$}
 \psfrag{C}[c][c]{$\pi$}
 \psfrag{E1}[c][c]{$\textcolor{red}{\gamma}$}
 \psfrag{E2}[c][c]{$\textcolor{red}{\tilde{\gamma}}$}
 \psfrag{p}[c][c]{$p$}
 \psfrag{p1}[c][c]{$\tilde{p}$}
 \psfrag{p2}[c][c]{$\gamma . \tilde{p}$}
 \psfrag{q}[c][c]{$q$}
 \psfrag{q1}[c][c]{$\tilde{q}$}
 \psfrag{q2}[c][c]{$\gamma . \tilde{q}$}
  \psfrag{s}[c][c]{$\textcolor{blue}{\sigma}$}
 \psfrag{s1}[c][c]{$\textcolor{blue}{\tilde{\sigma}}$}
 \psfrag{s2}[c][c]{$\textcolor{blue}{\gamma . \tilde{\sigma}}$}
 \includegraphics[width=0.5 \columnwidth]{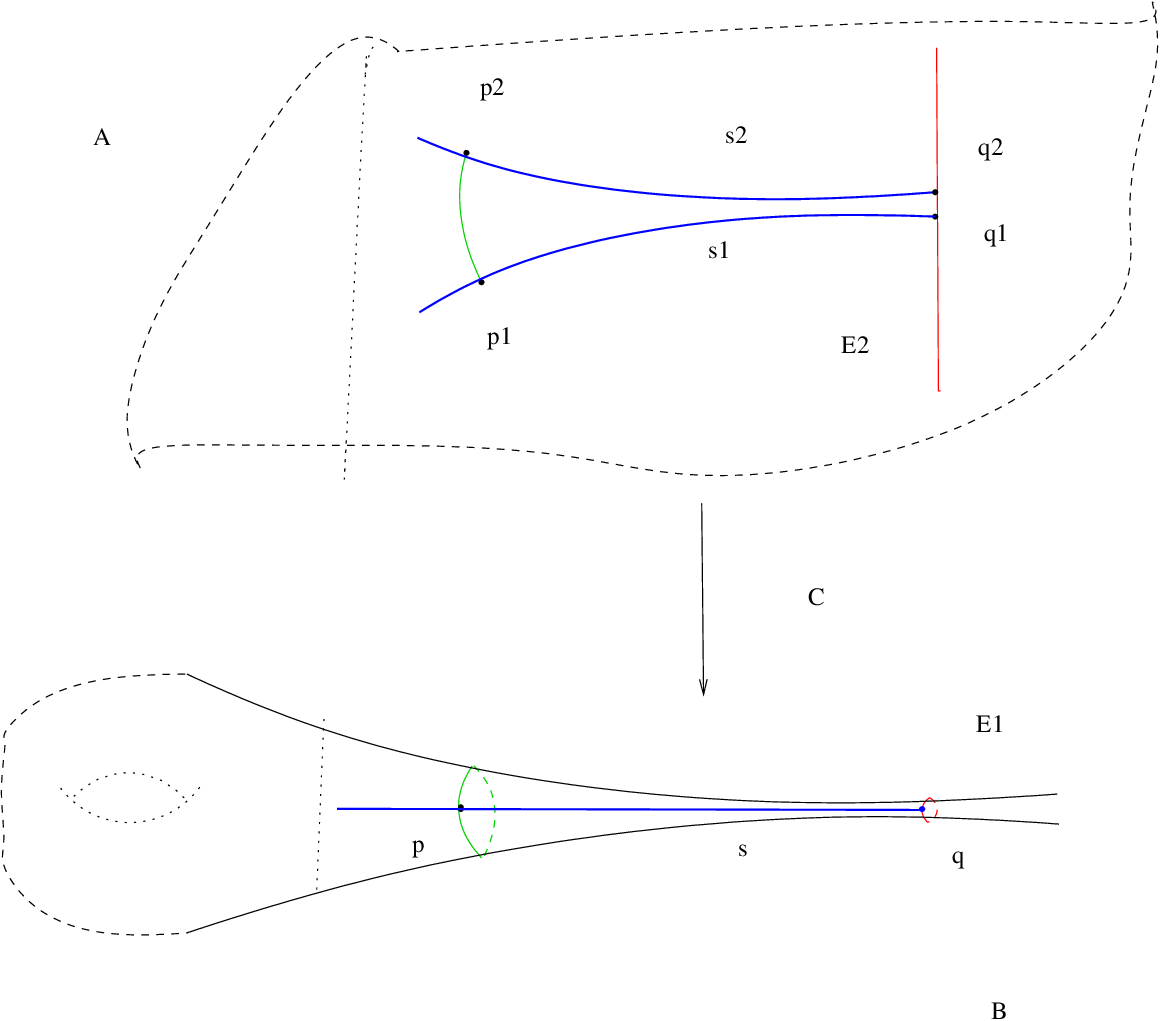}
 \caption{Pseudo-local actions from small first homology classes}
 \label{fig:threemoduli}
 \end{center}
 \end{figure}

We remark that the proof of Theorem~\ref{thm: main3} is inspired by Colding and
Naber's original work \cite{ColdingNaber}, where the H\"older continuity (in
the Gromov-Hausdorff sense) of geodesic balls centered along the middle of a
minimal geodesic is proven. Colding and Naber \cite{ColdingNaber} developed
ingenious and powerful arguments that enable us to pass the metric properties
along the middle of a minimal geodesic beyond the local scale, and we expect
applications in many other settings. For instance, in \cite{HLW18} their
arguments are adapted to show that any pointed Gromov-Hausdorff limit of a
sequence of Ricci shrinkers with a uniform $\boldsymbol{\mu}$-entropy lower
bound is a conifold Ricci shrinker; see also \cite{LLW18}.

The rigidity case in Theorem~\ref{thm: main1}, i.e. when $b_1(M)-b_1(N)=\dim
M-\dim N$, is then a relatively straightforward consequence of our second major
technical tool, a Ricci flow smoothing result:
\begin{theorem}\label{thm: main2}
Given positive constants $D\ge 1$, $m\in \mathbb{N}$, $\alpha<10^{-2}m^{-1}$
and $\iota<\min\{1,10^{-2}D\}$, there are positive constants
$\delta_{RF}(m,D,\alpha,\iota)<1$ and $\varepsilon_{RF}(m,D,\alpha,\iota)<1$ to
the following effect: if $(M,g)\in \mathcal{M}_{Rc}(m)$ and $(N,h)\in
\mathcal{M}_{Rm}(k,D,v)$ (with $k\le m$) satisfy
\begin{enumerate}
  \item $d_{GH}(M,N)\ <\ \delta$ for some $\delta\le \delta_{RF}$, and 
  \item $b_1(M)-b_1(N)\ =\ \dim M-\dim N$,
\end{enumerate}
then there is a Ricci flow solution $g(t)$ defined on $M$ with $g(0)=g$,
existing for a period no shorter than $\varepsilon_{RF}^2$, such that 
\begin{align}\label{eqn: main2}
\forall\ t\ \in\ (0,\varepsilon_{RF}^2],\quad
\sup_{M}\left|\Rm_{g(t)}\right|_{g(t)}\ \le\ \alpha
t^{-1}+\varepsilon_{RF}^{-2}.
\end{align}
\end{theorem}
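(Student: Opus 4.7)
The plan is to exploit the equality $b_1(M) - b_1(N) = m-k$ to upgrade the collapsing geometry on $M$ into a non-collapsed one on local universal covers, and then invoke Perelman-type pseudolocality on these covers to construct a Ricci flow whose curvature estimate descends equivariantly to $M$.

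First I would show that the equality hypothesis forces $\rank\,\tilde{\Gamma}_{\bar C\delta}(p) = m-k$ at every $p \in M$, where $\bar C = \bar C(m,D)$ is the constant from Theorem \ref{thm: main3}. Following the scheme sketched in the introduction, one locates $m-k$ short closed geodesics representing independent torsion-free classes of $H_1^\delta(M;\mathbb Z)$, slides each of them to an arbitrary basepoint $p$ along a minimal geodesic, and uses Theorem \ref{thm: main3} on the universal cover to verify that the resulting loops have length comparable to $\delta$ and represent linearly independent classes in $\tilde{\Gamma}_{\bar C\delta}(p)$. This yields $b_1(M)-b_1(N) \le \rank\,\tilde{\Gamma}_{\bar C\delta}(p)$, and combined with the Naber--Zhang bound $\rank\,\tilde{\Gamma}_{\bar C\delta}(p) \le m-k$ the equality hypothesis forces maximality.

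Next, by \cite[Proposition 5.9]{NaberZhang} this maximality implies uniform non-collapsing of the universal cover $\widetilde{B_g(p,2)}$, namely a definite volume lower bound at unit scale. Together with Gromov--Hausdorff closeness of $M$ to the smooth base $(N,h)\in \mathcal{M}_{Rm}(k,D,v)$, which presents $B_g(p,r)$ locally as an almost $\mathbb T^{m-k}$-bundle over a Euclidean $k$-ball, unwinding the free abelian subgroup of rank $m-k$ produces a pointed cover that is Gromov--Hausdorff close to a Euclidean $m$-ball at some small scale $r_0$. Since the Ricci lower bound lifts to the cover, Cheeger--Colding $\epsilon$-regularity yields a uniform $C^{1,1/2}$-harmonic radius lower bound on this cover. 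Perelman's pseudolocality then produces a Ricci flow $\tilde g(t)$ on the cover, existing for time of order $r_0^2$ and satisfying $|\Rm_{\tilde g(t)}|_{\tilde g(t)} \le \alpha t^{-1} + C$. Because deck transformations act isometrically on the initial data and Ricci flow with bounded curvature on a complete manifold is unique (hence equivariant under initial isometries), these bounds descend to $M$; short-time existence on the closed manifold $M$ is automatic by Hamilton's theorem, and the descended uniform curvature estimate together with a standard continuation argument extends the flow up to time $\varepsilon_{RF}^2$.

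The hardest part is expected to be making the local universal covers simultaneously non-collapsed \emph{and} almost Euclidean at a common scale suitable for pseudolocality. Non-collapsing from \cite{NaberZhang} holds in the large ball $\widetilde{B_g(p,2)}$, whereas pseudolocality demands $\epsilon$-closeness to $\mathbb R^m$ at a finer scale; matching the two relies on a quantitative use of Theorem \ref{thm: main3} to guarantee that the lifts of the $m-k$ generating short closed geodesics form an almost flat $\mathbb Z^{m-k}$ lattice in the cover, so that the cover is genuinely approximated by $\mathbb R^k \times \mathbb R^{m-k}$ rather than a sheared or twisted variant. Reconciling this interplay --- between the global Betti number equality, the local almost-fibered structure inherited from $N$, and the covering-space unwinding --- is where all three technical inputs of the paper (the pseudo-local fundamental group analysis, the geodesic spreading estimate of Theorem \ref{thm: main3}, and Naber--Zhang's non-collapsing) must be combined coherently.
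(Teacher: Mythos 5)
Your high-level strategy matches the paper's: use the Betti number equality to force maximal rank of the pseudo-local fundamental group at every point, deduce from Naber--Zhang that the universal cover is uniformly non-collapsed, and run a pseudolocality argument on the cover to obtain the curvature bound (\ref{eqn: main2}), which then descends to $M$ because the covering map is a local isometry for the flows.

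There is, however, a genuine gap at the pseudolocality step. You assert that ``Cheeger--Colding $\epsilon$-regularity yields a uniform $C^{1,1/2}$-harmonic radius lower bound'' on the cover from its Gromov--Hausdorff proximity to a Euclidean ball. This is not available in the present setting: extracting $C^{1,\alpha}$ harmonic coordinates from Gromov--Hausdorff closeness to $\mathbb{R}^m$ requires a two-sided Ricci bound (this is Anderson's $\epsilon$-regularity). With only a Ricci \emph{lower} bound, the Cheeger--Colding almost rigidity theory gives at best Reifenberg-type/measure-theoretic control, and in particular says nothing about the $C^{1,\alpha}$ harmonic radius. Without that bound you cannot verify the almost-Euclidean isoperimetric hypothesis of Perelman's original pseudolocality theorem on the cover, so your invocation of Perelman's pseudolocality is blocked.

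The paper circumvents this precisely by choosing a different version of pseudolocality. In Lemma~\ref{lem: almost_Euclidean}, after \cite[Proposition 5.4]{NaberZhang} gives that balls in the universal cover are Gromov--Hausdorff close to Euclidean balls at a definite scale, the paper applies Colding's volume continuity theorem \cite[Main Lemma 2.1]{Colding97} to upgrade this to an \emph{almost-maximal volume ratio} at every point of $\widetilde{M}$ below a definite scale $r_{AE}$. It then invokes the Tian--Wang version of pseudolocality (Proposition~\ref{prop: CTY11}), whose hypotheses are exactly a Ricci lower bound plus an almost-maximal volume ratio, with no isoperimetric or harmonic-radius condition. That is what makes the one-sided Ricci hypothesis workable. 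Replacing your harmonic-radius intermediary with Colding's volume continuity theorem and swapping Perelman's pseudolocality for the Tian--Wang variant would bring your argument into line with the paper's.
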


This theorem grows out of a program initiated by the first named author in
\cite{Foxy1808} to investigate the behavior of Ricci flows with possibly
collapsing initial data. While in the setting of Theorem~\ref{thm: main2}, one
could always start a Ricci flow as $M$ is a closed manifold (see
\cite{Hamilton}), the emphasis here is the uniform lower bound on the existence
time, a crucial aspect when applying Ricci flows as means of smoothing. In
dimensions at least three, all known results on the short-time existence of
Ricci flows with initial Ricci curvature lower bound (see \cite{Simon12,
Hochard, He16, ST17, SHuang19}) rely on the initial uniform
\emph{non-collapsing} assumption to bound from below the existence time. In
contrast, Theorem~\ref{thm: main2} (when $k<m$) enables one to start the Ricci flow from collapsing initial data for a definite period of time. A localized version of Theorem~\ref{thm: main2} with singular collapsing limit could be found in \cite{HW20b}.

In fact, by (\ref{eqn: main2}) it is not hard to check that the smoothing
metric $g(\varepsilon^2_{RF})$, obtained from Theorem~\ref{thm: main2}, defines
a distance function that is equivalent to the original distance specified by
$g=g(0)$, and thus $(M,g(\varepsilon^2_{RF}))$ is also sufficiently
Gromov-Hausdorff close to the (lower dimensional) manifold $(N,h)$ at a fixed
scale. By the fibration theorems in \cite{CFG92, HKRX18}, we know that $M$ is
an infranil fiber bundle over the smooth manifold $N$. Relatively simple
arguments involving the Hurewicz theorem and the first Betti number then show
that the fibers must be tori.

After discussing the background and pointing out the technical difficulties in
proving Theorem~\ref{thm: main1} in \S 2, we will quantify the first Betti number difference by short loop homology (cf. Proposition \ref{prop: rank})
in \S 3. We will then prove
Theorem~\ref{thm: main3} in \S 4, and consequently Claim (1) of
Theorem~\ref{thm: main1} in \S 5. The proof of Claim (2) in Theorem~\ref{thm:
main1} will follow once Theorem~\ref{thm: main2} is established in \S 6, and
some further remarks will be left in the final section.

\section{Background and preliminary discussion}
In this section we explain the rationale and the technical difficulties in the 
proof of Theorem~\ref{thm: main1}. We begin our discussion with a much simpler
case.

\subsection{A precursor for non-negative Ricci curvature}
For closed manifolds with non-negative Ricci curvature, the Bochner technique tells that $b_1(M)\le \dim M$, and the following theorem, which is due to Yau \cite[Theorem 3]{Yau72}, reveals the structural information encoded in $b_1(M)$:
\begin{theorem}(Yau, 1972)\label{thm: rigidity_Rc_ge0}
If $(M,g)$ is a close oriented Riemannian manifold with non-negative Ricci curvature and
dimension at least three, then there is a closed Riemannian manifold $(F,h)$ with non-negative Ricci curvature, such that $M$ is isometric to an $F$-bundle over $\mathbb{T}^{b_1(M)}$.
\end{theorem}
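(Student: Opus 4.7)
The plan is to combine Bochner's technique with the de Rham decomposition theorem on the universal cover. First, because $\Rc_g \ge 0$, the classical Bochner identity forces every harmonic $1$-form on $M$ to be parallel. Choose harmonic representatives $\omega_1, \ldots, \omega_k$ (with $k := b_1(M)$) forming a basis of the free part of $H^1(M;\mathbb{Z}) \hookrightarrow H^1(M;\mathbb{R})$; dualizing yields parallel---hence mutually commuting and Killing---vector fields $V_1, \ldots, V_k$ on $M$ that span a parallel, flat, totally geodesic rank-$k$ distribution. Lifting to the universal cover $\widetilde{M}$, the de Rham decomposition theorem produces an isometric splitting $\widetilde{M} \cong \widetilde{F} \times \mathbb{R}^k$ in which the Euclidean factor integrates $\widetilde{V}_1, \ldots, \widetilde{V}_k$, and $\widetilde{F}$ is a complete Riemannian manifold with $\Rc \ge 0$ (inheriting the curvature bound as a factor of the Riemannian product).

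Next I analyze the deck action $G := \pi_1(M) \curvearrowright \widetilde{M}$. Since each $\widetilde{V}_i$ is the pullback of a globally defined $V_i$ on $M$, every $\gamma \in G$ satisfies $d\gamma(\widetilde{V}_i) = \widetilde{V}_i$, and a direct computation exploiting the isometry condition forces the product form $\gamma(x, v) = (\phi_\gamma(x),\, v + \tau(\gamma))$ with $\phi_\gamma \in \mathrm{Isom}(\widetilde{F})$ and $\tau(\gamma) \in \mathbb{R}^k$. The translation part $\tau : G \to \mathbb{R}^k$ is a group homomorphism, and by arranging the $V_i$'s to be metrically dual to the $\omega_i$'s one identifies
\begin{equation*}
\tau(\gamma)\ =\ \left(\int_\gamma \omega_1,\ \ldots,\ \int_\gamma \omega_k\right).
\end{equation*}
With the integral-basis convention, the image $\Lambda := \tau(G)$ coincides with the period lattice $\mathbb{Z}^k \subset \mathbb{R}^k$.

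Set $K := \ker \tau$. Elements of $K$ act trivially on the Euclidean factor, so the $G$-action restricts to a free, properly discontinuous action of $K$ on $\widetilde{F}$ by isometries; compactness of $M = \widetilde{M}/G$ together with cocompactness of $\Lambda$ in $\mathbb{R}^k$ then forces $F := \widetilde{F}/K$ to be a closed Riemannian manifold with $\Rc \ge 0$. The short exact sequence $1 \to K \to G \to \Lambda \to 1$ induces a well-defined, free, properly discontinuous action of $G/K \cong \mathbb{Z}^k$ on $F \times \mathbb{R}^k$ by $([x], v) \mapsto ([\bar\phi_\gamma(x)],\, v + \tau(\gamma))$ (independence of coset representative holds because $K$ acts trivially on $F$). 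The resulting isometry $M \cong (F \times \mathbb{R}^k)/\mathbb{Z}^k$ fibers over $\mathbb{R}^k/\mathbb{Z}^k = \mathbb{T}^{b_1(M)}$, giving the desired $F$-bundle structure with flat connection and holonomy representation $\mathbb{Z}^k \to \mathrm{Isom}(F)$.

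The principal delicacy lies in the second step: one must simultaneously exploit (i) that the Euclidean factor of the de Rham splitting is intrinsically distinguished by the descent of the $V_i$'s to $M$, so no deck transformation may mix $\mathbb{R}^k$ with any flat sub-factor possibly hidden in $\widetilde{F}$, and (ii) that the isometry condition rigidly forces the translation component of $\gamma$ to be independent of the $\widetilde{F}$-coordinate. Once this product form is secured, the identification of $\Lambda$ as a full-rank lattice and the compactness of $F$ are consequences of our integral choice of $\omega_1, \ldots, \omega_k$ and compactness of $M$; the dimensional hypothesis $\dim M \ge 3$ is not used essentially and appears only to sidestep trivial low-dimensional cases.
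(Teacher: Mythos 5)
Your argument is correct, but it follows a genuinely different route from the one the paper presents. You run the classical Bochner--Hodge line: harmonic representatives of an integral basis of $H^1(M;\mathbb{Z})$ are parallel under $\Rc_g\ge 0$, their dual parallel Killing fields span a holonomy-invariant distribution, and the de Rham decomposition of $\widetilde{M}$ gives $\widetilde{F}\times\mathbb{R}^{b_1}$; the deck group is then analyzed explicitly through the period homomorphism $\tau$, whose image is the full lattice $\mathbb{Z}^{b_1}$ by unimodularity of the pairing between $H^1(M;\mathbb{Z})$ and $H_1(M;\mathbb{Z})/\mathrm{Tor}$. The delicate step you flag --- that deck transformations split as $\phi_\gamma\times(\text{translation})$ --- is indeed secured because they preserve the individual lifted fields $\widetilde{V}_i$ (not merely the distribution), so no mixing with flat factors of $\widetilde{F}$ can occur. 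The paper instead avoids Hodge theory entirely: it realizes the torsion-free generators of $H_1(M;\mathbb{Z})$ by closed geodesics of minimal length in their free homotopy classes, proves that each such geodesic lifts to a line in $\widetilde{M}$, and then splits off $\mathbb{R}$-factors inductively via the Cheeger--Gromoll splitting theorem, deferring the deck-group analysis to Yau's original argument. Your approach (essentially Bochner--Yau's original one, which the paper acknowledges) buys a transparent, explicit identification of the holonomy lattice via periods; the paper's geodesic-and-lines route is chosen deliberately because closed geodesics, lines, and the resulting splitting arguments localize and quantify to the collapsing, Ricci-bounded-below setting of Theorem~\ref{thm: main1}, where parallel harmonic forms are unavailable. (Incidentally, you are right that orientability and $\dim M\ge 3$ play no essential role in either argument.)
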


Just as the approach of Bochner's original theorem (see \cite{Bochner, BY}), Yau's proof relied on the existence of harmonic vector fields. The assumed non-negative Ricci curvature plays a key role in the proof: firstly, it implies that the harmonic vector fields are parallel, therefore integrating to \emph{lines} in the universal covering space; and secondly, it allows the application of the de Rham (or the Cheeger-Gromoll) splitting theorem, to isometricaly produce an $\mathbb{R}^{b_1(M)}$-factor of the universal covering space. Moving from the case of non-negative Ricci curvature to the more general setting of Ricci curvature bounded (negatively) from below usually involves non-trivial localization and quantification, as examplified by the Colding-Gromov gap theorem \cite{Colding97} and the Cheeger-Colding almost splitting theorem \cite{ChCo0}. In order to facilitate such localization and quantification, we will realize the torsion-free first homology classes by closed geodesics.

 By the Hurewicz theorem, for each torsion-free generator in 
 $H_1(M;\mathbb{Z})$, we could find a continuous loop $\gamma':[0,1]\to M$ such that the
 homotopy class $[\gamma']\in \pi_1(M,\gamma'(0))$ is also torsion-free.
 Minimizing the length functional within the \emph{free homotopy class} of
 $\gamma'$, we could find a loop $\gamma:[0,1]\to M$ of minimal length in the class. Clearly $[\![\gamma]\!]=[\![\gamma']\!]\in
 H_1(M;\mathbb{Z})$, and  $\gamma$ is in fact a \emph{closed geodesic} (\cite[\S 12.2]{doCarmo}), i.e.
 $\gamma$ is a smooth geodesic and $(\gamma(0),\dot{\gamma}(0))
 =(\gamma(1),\dot{\gamma}(1))\in TM$.  
 
 Let $\pi:\widetilde{M}\to M$ denote the universal covering map, and 
equip $\widetilde{M}$ with the pull-back metric $\pi^{\ast}g$. 
 Now lifting $\gamma$ to $\widetilde{M}$, since $\gamma$ is a closed geodesic,
 we know that the lifted curve extends over both ends as a \emph{smooth
 geodesic}. Moreover, since $[\![\gamma]\!]$ is torsion-free, it is of infinite
 order --- we could therefore extend the lifted curve infinitely towards both
 directions and obtain a smooth geodesic $\tilde{\gamma}: \mathbb{R} \to
 \widetilde{M}$. In particular, $\tilde{\gamma}(\mathbb{R})\subset \widetilde{M}$ is a closed embedded submanifold.

In general, $\tilde{\gamma}$ is not a line; even if it were a line its existence in a complete manifold with a negative Ricci curvature lower bound does not guarantee the desired isometric splitting. We therefore needs to rely on Theorem~\ref{thm: main3} to quantitatively and uniformly control the size of the action induced by a small isometric action along the curve $\tilde{\gamma}$. On the other hand, the existence of such small isometric actions is a consequence of the ``collapsing'' assumption in Theorem~\ref{thm: main1} --- the Gromov-Hausdorff closeness of $M$ to (the lower dimensional) $N$ enables us to find the short closed geodesics that generate small isometric actions along their lifts in the universal covering space $\widetilde{M}$.

\subsection{Outlining the proof of Theorem~\ref{thm: main1}}
As previously mentioned, we will need tol ``localize'' and ``quantify'' the information encoded in the first Betti number.

Given $(M,g)\in \mathcal{M}_{Rc}(m)$, we recall that the pseudo-local fundamental group $\tilde{\Gamma}_{\delta}(p)$ is defined for any $\delta\in (0,1)$ and $p\in M$ as
\begin{align*}
\tilde{\Gamma}_{\delta}(p)\ =\ Image[\pi_1(B_g(p,\delta),p)\to \pi_1(M,p)].
\end{align*}
Notice that $\tilde{\Gamma}_{\delta}(p)$ could be generated by geodesic loops
$\gamma:(\mathbb{S}^1,1)\to (B_g(p,\delta),p)$ with length not exceeding
$2\delta$. On the other hand, considering the induced action of $\gamma$ on
$(\widetilde{M},\pi^{\ast}g)$ --- here $\pi:\widetilde{M}\to M$ is the
universal covering and $\pi^{\ast}g$ is the covering metric --- we clearly see
that $d_{\pi^{\ast}g}(\gamma.\tilde{p},\tilde{p})\le 2\delta$, with
$\tilde{p}\in \pi^{-1}(p)$ denoting a lift of $p$ to the universal covering
space $\widetilde{M}$. From this point of view, $\tilde{\Gamma}_{\delta}(p)$
could be characterized as a subgroup of $Isom(\widetilde{M},\pi^{\ast}g)$, by
\begin{align*}
\tilde{\Gamma}_{\delta}(p)\ \cong\ \widetilde{G}_{\delta}(p)\ :=\ \left\langle
\gamma\in \pi_1(M,p):\ d_{\pi^{\ast}g}(\gamma.\tilde{p},\tilde{p})\le
2\delta \right\rangle,
\end{align*}
for any given lift $\tilde{p}\in \pi^{-1}(p)$ of $p\in M$. 

If for some $(N,h)\in \mathcal{M}_{Rm}(k,D,v)$ with $k\le m$ and
$\iota\in (0,10^{-2}D)$, we have $d_{GH}(M,N)<10^{-1}\delta$, then we could
see that whenever $\delta>0$ is sufficiently small,
$\tilde{\Gamma}_{\delta}(p)$ is almost nilpotent with nilpotency rank not
exceeding $m-k$, for any $p\in M$.

\begin{lemma}\label{lem: nil_rank}
In the setting above, there is a constant $\delta_{Nil}>0$ determined by
$\iota$ and $m$, such that if $d_{GH}(M,N)<\delta$ for some
$\delta\le 10^{-1}\delta_{Nil}$, then $\rank\
\tilde{\Gamma}_{\delta_{Nil}}(p)\le \dim M-\dim N$ for any $p\in M$.
\end{lemma}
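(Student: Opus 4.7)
The plan is to reduce the rank estimate for $\tilde{\Gamma}_{\delta_{Nil}}(p)$ to the already-known rank estimate for the fibered fundamental group $\Gamma_{\delta_{Nil}}(p)$ via a canonical surjection between them, thereby leveraging \cite[Theorem 2.27]{NaberZhang} directly.

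First, I would observe that for any $\delta\in (0,1)$ and any $p\in M$, the inclusions $B_g(p,\delta)\subset B_g(p,2)\subset M$ induce a factorization
\begin{align*}
\pi_1(B_g(p,\delta),p)\ \longrightarrow\ \pi_1(B_g(p,2),p)\ \longrightarrow\ \pi_1(M,p),
\end{align*}
so the image in the right-most group is the image of the image in the middle. This yields a canonical surjection $\Gamma_{\delta}(p)\twoheadrightarrow \tilde{\Gamma}_{\delta}(p)$. This step is purely formal but is the crucial observation: all rank information about $\tilde{\Gamma}_\delta(p)$ is controlled by that of $\Gamma_\delta(p)$.

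Next, I would invoke \cite[Theorem 2.27]{NaberZhang}. Since $(N,h)\in\mathcal{M}_{Rm}(k,D,v)$, the Jost--Karcher result \cite{JK82} provides a uniform lower bound $\iota=\iota(k,D,v)\in(0,1)$ on the $C^{1,\frac{1}{2}}$-harmonic radius of $N$, which is the scale at which $N$ is Euclidean up to controlled distortion. Their theorem then supplies a scale $\delta_{Nil}=\delta_{Nil}(\iota,m)>0$ such that whenever $d_{GH}(M,N)<10^{-1}\delta_{Nil}$, one has $\rank\ \Gamma_{\delta_{Nil}}(p)\le \dim M-\dim N$ at every $p\in M$; by shrinking $\delta_{Nil}$ if necessary, one may simultaneously apply \cite[Theorem 1]{KW11} to ensure that $\Gamma_{\delta_{Nil}}(p)$ is in fact almost nilpotent, so that ``$\rank$'' is well-defined.

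Finally, I would transfer the conclusion through the surjection of Step 1. A quotient of an almost nilpotent group is almost nilpotent (the image of a finite-index nilpotent subgroup is a finite-index nilpotent subgroup of the quotient), and the torsion-free rank is non-increasing under quotients of nilpotent groups (the upper central series of the quotient is a quotient, term by term, of the upper central series of the source). Thus
\begin{align*}
\rank\ \tilde{\Gamma}_{\delta_{Nil}}(p)\ \le\ \rank\ \Gamma_{\delta_{Nil}}(p)\ \le\ \dim M-\dim N,
\end{align*}
as desired. The substantive analytic content is entirely absorbed into \cite{KW11} and \cite{NaberZhang}; the only possible obstacle would be verifying that the ``sufficiently small'' threshold in \cite[Theorem 2.27]{NaberZhang} truly depends only on $\iota$ and $m$ (and not on $v$ or $D$ individually), which is why the harmonic radius $\iota$---rather than $(k,D,v)$ themselves---is the honest parameter in the statement.
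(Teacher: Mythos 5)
Your argument is correct, but it reaches the rank bound by a different route than the paper. The paper never passes through the fibered fundamental group $\Gamma_{\delta}(p)$: it identifies $\tilde{\Gamma}_{\delta_{Nil}}(p)$ with the group $\widetilde{G}_{\delta_{Nil}}(p)$ of deck transformations of the universal cover $\pi:\widetilde{M}\to M$ (restricted over a ball around $p$) that move a lift $\tilde{p}$ by at most $2\delta_{Nil}$, and then, after the same Jost--Karcher/rescaling step you perform, applies \cite[Theorem 4.25]{NaberZhang}, which is formulated for groups of small deck transformations of an arbitrary normal covering of an almost-Euclidean ball; this directly yields almost nilpotency and $\rank \le m-k$ for $\tilde{\Gamma}_{\delta_{Nil}}(p)$, and the realization $\widetilde{G}_{\delta}(p)$ is reused later in the paper. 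You instead invoke the $\varepsilon$-regularity statement for the fibered fundamental group (\cite[Theorem 2.27]{NaberZhang}, plus \cite{KW11} for almost nilpotency) and push the bound through the canonical surjection $\Gamma_{\delta}(p)\twoheadrightarrow\tilde{\Gamma}_{\delta}(p)$ coming from the factorization $\pi_1(B_g(p,\delta))\to\pi_1(B_g(p,2))\to\pi_1(M)$; since a homomorphic image of an almost nilpotent group is almost nilpotent and its rank does not increase, the conclusion follows. This is a legitimate and somewhat more formal alternative; what it buys is that all the geometry is outsourced to the already-stated fibered-fundamental-group theorem, at the cost of the extra group-theoretic step. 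Two small points to tighten: (i) the rescaling happens at the harmonic-radius scale $\iota_0$, so what Naber--Zhang actually controls is $\mathrm{Image}[\pi_1(B_g(p,\delta_{Nil}))\to\pi_1(B_g(p,\iota_0))]$ rather than the group with outer radius $2$; this is harmless because $\tilde{\Gamma}_{\delta_{Nil}}(p)$ is a quotient of that group as well, by the same factorization through the intermediate ball, but it should be said explicitly. (ii) Your parenthetical justification of rank monotonicity via the upper central series is not quite right as stated (a surjection only maps $Z_i(G)$ into $Z_i(Q)$); the clean argument uses the lower central series, whose image is the lower central series of the quotient, or simply the standard fact that Hirsch length is non-increasing under quotients of finitely generated nilpotent groups.
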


\begin{proof}
For any $(N,h)\in \mathcal{M}_{Rm}(k,D,v)$, by \cite{JK82} we understand
that there are uniform constants $C_{hr}(k,D,v)>0$ and 
$\iota_{hr}(k,D,v)\in (0,1)$ such that the $C^{1,\frac{1}{2}}$ harmonic radii
at all points in $N$ is bounded below by $\iota_{hr}$. We let
$\bar{\iota}_{hr}(m,D,v):=\min_{0\le k\le m}\iota_{hr}(k,D,v)$. On the
other hand, there is a constant $\varepsilon_{NZ}(m) :=\min_{0\le l\le m}
\varepsilon_0(m, \mathbb{B}^l(1))$, with each
$\varepsilon_0(m,\mathbb{B}^l(1))\in (0,1)$ denoting the uniform constant
obtained in \cite[Theorem 4.25]{NaberZhang}. Now by the $C^{1,\frac{1}{2}}$
harmonic radius lower bound, we have a uniform radius
$\iota_0\left(m,\max_{0\le k\le m}C_{hr}(k,D,v)\right)\in (0,\bar{\iota}_{hr})$
such that
\begin{align}
\forall \bar{p}\in N,\quad
d_{GH}\left(B_h(\bar{p},\iota_0),\mathbb{B}^k(\iota_0) \right)\ <\
10^{-1}\varepsilon_{NZ}(m)\iota_0.
\end{align}
We now set 
$\delta_{Nil}(m,D,v):=2^{-1} \varepsilon_{NZ}(m)\iota_0$, and 
assume that $\delta\le 10^{-1} \delta_{Nil}$.

If $d_{GH}(M,N)<\delta$, let $\Phi:M\to N$ denote a
$\delta$-Gromov-Hausdorff approximation and we have
\begin{align*}
 d_{GH}\left(B_g(p,\iota_0),\mathbb{B}^k(\iota_0) \right)\
\le\ d_{GH}\left(M,N\right)
+d_{GH}\left(B_h(\Phi(p),\iota_0), \mathbb{B}^k(\iota_0) \right)
\le\ 5^{-1}\epsilon_{NZ}\iota_0
\end{align*}
for every $p \in M$.

Now performing the rescaling $g\mapsto 4\iota_0^{-2}g=:\bar{g}$ and $h\mapsto
4\iota_0^{-2}h=:\bar{h}$, the above estimate becomes 
\begin{align}
d_{GH}\left(B_{\bar{g}}(p,2),\mathbb{B}^k(2)\right)\ <\ \varepsilon_{NZ}.
 \end{align} 
 
On the other hand, we notice  that the universal covering $\pi:
\widetilde{M}\to M$ is a normal covering with deck transformation group
$\pi_1(M,p)$, and the same conditions hold for its restriction to the local
covering $\pi_p:\pi^{-1}(B_{\pi^{\ast}\bar{g}}(p,2))\to B_{\bar{g}}(p,2)$ ---
the rescaled metric $\bar{g}$ is pulled back to the universal covering space
$\widetilde{M}$. We then see that
\begin{align*}
\widetilde{G}_{\delta_{Nil}}(p)\ =\ &\left\langle \gamma\in \pi_1(M,p):\
d_{\pi^{\ast}g}(\gamma.\tilde{p},\tilde{p})\le 2\delta_{Nil}\right\rangle\\
=\ &\left\langle \gamma\in \pi_1(M,p):\ d_{\pi^{\ast}\bar{g}}(\gamma.\tilde{p},
\tilde{p})\le 2\varepsilon_{NZ}\right\rangle.
\end{align*}
Consequently, we appeal to \cite[Theorem 4.25]{NaberZhang} to see that
$\widetilde{G}_{\delta_{Nil}}(p)$ is almost nilpotent with nilpotency rank
bounded above by $m-k$. But as we have already
seen that $\tilde{\Gamma}_{\delta_{Nil}}(p)\cong
\widetilde{G}_{\delta_{Nil}}(p)$ for any $p\in M$, we know
$\tilde{\Gamma}_{\delta_{Nil}}(p)$ is almost nilpotent, with $\rank\
\tilde{\Gamma}_{\delta_{Nil}}(p)\le \dim M-\dim N$.
\end{proof}
For any $\delta<\delta_{Nil}$, once we have bounded $\rank\
\tilde{\Gamma}_{\delta_{Nil}}(p)$ by the dimensional difference, our goal would
be to show that $b_1(M)-b_1(N)\le \rank\ \tilde{\Gamma}_{\delta_{Nil}}(p)$ for
any $p\in M$.

To extract the homological information and get the desired control on the
pseudo-local fundamental group, we collect the first homology classes in $M$
generated by short loops in the group
\begin{align*}
H_1^{\delta}(M;\mathbb{Z})\ :=\ \left\langle [\![\gamma]\!]:\ |\gamma|\le
10\delta \right\rangle.
\end{align*}
Clearly, $H_1^{\delta}(M;\mathbb{Z})$ is an abelian subgroup of
$H_1(M;\mathbb{Z})$, and in Proposition~\ref{prop: rank} we will show, under the
assumption of Theorem~\ref{thm: main1}, that 
\begin{align}\label{eqn: rank_b1}
\rank\ H_1^{\delta}(M;\mathbb{Z})\ =\ b_1(M)-b_1(N).
\end{align}

So our discussion will now be to compare $\rank\ H_1^{\delta}(M;\mathbb{Z})$ and $\rank\ \tilde{\Gamma}_{\delta_{Nil}}(p)$ for any $p\in M$.
While the Hurewicz theorem tells that 
\begin{align*}
\forall p\in M,\quad H_1(M;\mathbb{Z})\ \cong\ \pi_1(M,p)\slash [\pi_1(M,p),
\pi_1(M,p)],
\end{align*}
the same reasoning may not directly lead to the realization of
$H_1^{\delta}(M;\mathbb{Z})$ as (a sub-group of) 
\begin{align*}
\tilde{\Gamma}_{\delta_{Nil}}(p)\slash
\left([\pi_1(M,p),\pi_1(M,p)]\cap \tilde{\Gamma}_{\delta_{Nil}}(p)\right)
\end{align*} 
for every $p\in M$.
This is because the definition of
$\tilde{\Gamma}_{\delta_{Nil}}(p)$ not just requires the generating loops in
consideration to be very short, but also to be based at the given point
$p\in M$. A $\delta$-small generator in $H_1^{\delta}(M;\mathbb{Z})$
may, however, be located anywhere in $M$, not necessarily passing through the
given point $p\in M$. In contrast, the Hurewicz theorem holds because in
$H_1(M;\mathbb{Z})$ the size of the generators are allowed to be arbitrarily
large --- though not exceeding $2\diam (M,g)$.

To remedy the situation, we would start from the $\delta$-small generators of
$H_1^{\delta}(M;\mathbb{Z})$, and estimate its size when slided to other points.
More specifically, denoting $\rank\ H_1^{\delta}(M;\mathbb{Z})=:l_M$, we could
find geodesic loops $\gamma_1',\ldots,\gamma_{l_M}'$ of length not exceeding
$10\delta$, such that $[\![\gamma_1']\!],\ldots,[\![\gamma_{l_M}']\!]$ generate
the torsion-free part of $H_1^{\delta}(M;\mathbb{Z})$, which is a rank $l_M$
free $\mathbb{Z}$-module. For each $i=1,\ldots,l_M$, we may then perturb
$\gamma'_i$ within its free homotopy class to some $\gamma_i$, achieving
the minimal possible length. Then each $\gamma_i$ becomes a closed geodesic
with $[\![\gamma_i]\!]=[\![\gamma_i']\!]$ and $|\gamma_i|\le 10\delta$.
Notice that each
$\gamma_i\in \tilde{\Gamma}_{5\delta}(\gamma_i(0))\cong
\widetilde{G}_{5\delta}(\gamma_i(0))$, and we will examine the effect of the
action $\gamma_i\in Isom(\widetilde{M},\pi^{\ast}g)$ on $\pi^{-1}(p)$, for any
$p\not\in \gamma_i([0,1])$.

Fix some $\gamma_i$ ($i=1,\ldots,l_M$), by straightforward volume comparison we
get an estimate of the form
$d_{\pi^{\ast}g}(\gamma_i.\tilde{p},\tilde{p}) \le
Cd_{g}(p,\gamma_i)^m\delta^{-m}$, 
for any $\tilde{p}\in \pi^{-1}(p)$ with $p\not\in \gamma_i([0,1])$; compare the
constants in \cite[Lemma 5.2]{NaberZhang}. While this estimate may be useful
when $p$ and $\gamma_i([0,1])$ are within a distance comparable to $o(\delta)$, 
it clearly provides insufficient information to recognize $\gamma_i$
as an element of $\tilde{\Gamma}_{C\delta}(p)$, when $d_g(p,\gamma_i([0,1]))$
is comparable to $\diam (M,g)$. A more reasonable attempt would rely on Colding
and Naber's H\"older continuity theorem (\cite[Theorem 1.1]{ColdingNaber}),
where, say, for a minimal geodesic $\sigma: [0,1+\varepsilon]\to M$ such that
$p=\sigma(1)$, $\gamma_i(t_0)=\sigma(\varepsilon)$ and 
$\left|\sigma|_{[\varepsilon,1]}\right|=d_g(p,\gamma_i([0,1])$, we could lift
it to a minimal geodesic $\tilde{\sigma}$ in $\widetilde{M}$ with
$\tilde{\sigma}(\varepsilon)=\tilde{\gamma}_i(t_0)$ and see
\begin{align}\label{eqn: CoNa_GH}
d_{GH}\left(B_{\pi^{\ast}g}(\tilde{p},r),
B_{\pi^{\ast}g}(\tilde{\sigma}(\varepsilon),r) \right)\ \le\
C(m,D)\varepsilon^{-1}r,
\end{align}
for  $\varepsilon, r>0$ sufficiently small, with
$\tilde{p}=\tilde{\sigma}(1)\in \pi^{-1}(p)$. Let $\tilde{\Phi}:
B_{\pi^{\ast}g}(\tilde{\sigma}(\varepsilon),r)\to B_{\pi^{\ast}g}(\tilde{p},r)$
denote the Gromov-Hausdorff approximation obtained from the proof of Colding and
Naber's theorem (\cite[Theorem 1.1]{ColdingNaber}). While (\ref{eqn: CoNa_GH})
provides certain control on
$d_{\pi^{\ast}g}\left(\tilde{\Phi}(\gamma_i.\tilde{\sigma}(\varepsilon)),
\tilde{\Phi}(\tilde{\sigma}(\varepsilon)) \right)$ in terms of
$d_{\pi^{\ast}g}\left(\gamma_i.\tilde{\sigma}(\varepsilon),
\tilde{\sigma}(\varepsilon)\right)$, the problem is that $\tilde{\Phi}$ is
\emph{not} almost equivariant with respect to the action of $\gamma_i$ --- in
general we have \emph{no} comparison between
$d_{\pi^{\ast}g}\left(\tilde{\Phi}(\gamma_i.\tilde{\sigma}(\varepsilon)),
\tilde{\Phi}(\tilde{\sigma}(\varepsilon)) \right)$ and
$d_{\pi^{\ast}g}(\gamma_i.\tilde{\sigma}(1),\tilde{\sigma}(1))$. See
Figure~\ref{fig:geodesicflow} for an illustration.

\begin{figure}
 \begin{center}
 \psfrag{A}[c][c]{$\tilde{\sigma}(1+\epsilon)$}
 \psfrag{B}[c][c]{$\tilde{\sigma}(0)$}
 \psfrag{F1}[c][c]{$\textcolor{green}{B_g(\tilde{\sigma}(\epsilon),r)}$}
 \psfrag{F2}[c][c]{$\textcolor{green}{\tilde{\Phi}(B_g(\tilde{\sigma}(\epsilon),r))}$}
 \psfrag{p}[c][c]{$\tilde{p}=\tilde{\sigma}(1)$}
 \psfrag{p1}[c][c]{$\tilde{\gamma} . \tilde{p}$}
 \psfrag{q}[c][c]{$\tilde{\sigma}(\epsilon)$}
 \psfrag{q1}[c][c]{$\tilde{\gamma} . \tilde{q}$}
 \psfrag{g}[c][c]{$\textcolor{red}{\tilde{\gamma}}$}
 \psfrag{s}[c][c]{$\textcolor{blue}{\tilde{\sigma}}$}
 \psfrag{s1}[c][c]{$\textcolor{blue}{\tilde{\gamma} .  \tilde{\sigma}}$}
 \includegraphics[width=0.5 \columnwidth]{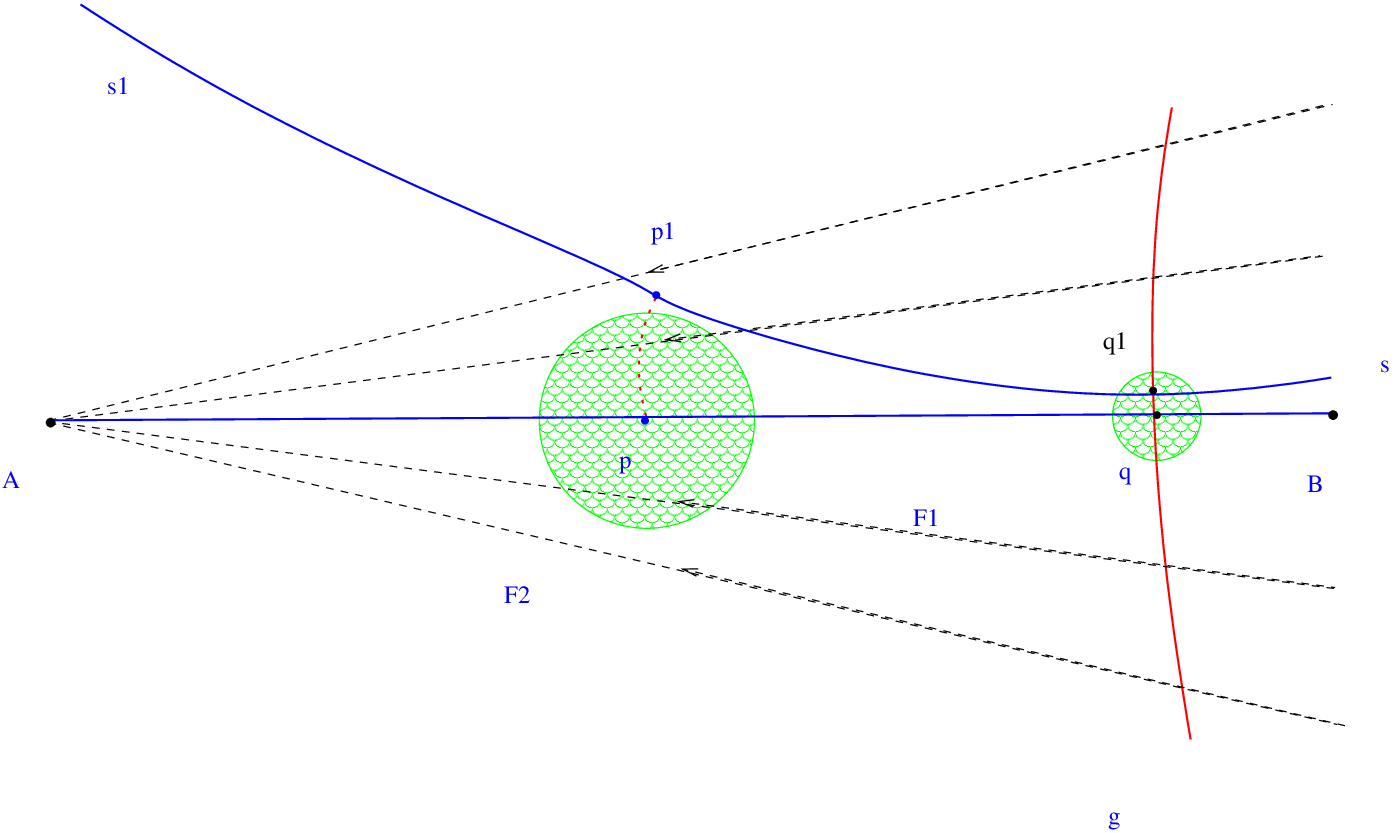}
 \caption{Lack of equivariance of the Gromov-Hausdorff approximation
 $\tilde{\Phi}$}
 \label{fig:geodesicflow}
 \end{center}
 \end{figure}
This explains the necessity of developing Theorem~\ref{thm: main3}, whose proof
in \S 5 essentially relies on Colding and Naber's original arguments and results
in \cite{ColdingNaber}. With this theorem at hand, we could slide $\gamma_i$ to
any $p\in M$ and obtain a geodesic loop of length smaller than $\delta_{Nil}$,
producing an element of $\tilde{\Gamma}_{\delta_{Nil}}(p)$ --- in fact, the
``slided loop'' at $p$ is defined as the projection under $\pi$ of any minimal
geodesic realizing $d_{\pi^{\ast}g}(\gamma.\tilde{\sigma}(1),
\tilde{\sigma}(1))$ in the setting above. The $\mathbb{Z}$-independence of the
homology classes $[\![\gamma_1]\!],\ldots, [\![\gamma_{l_M}]\!]$ then
guarantees the new loops at $p$ to define independent torsion-free elements of
$\tilde{\Gamma}_{\delta_{Nil}}(p)$, proving $\rank\
\tilde{\Gamma}_{\delta_{Nil}}(p)\ge l_M =\rank\ H_1^{\delta}(M;\mathbb{Z})$ ---
here we obviously need to assume that $\delta<<\delta_{Nil}(m)$ is sufficiently
small.

\section{First homology classes generated by short loops}
The goal of this section is to prove the equality $\rank\
H_1^{\delta}(M;\mathbb{Z})=b_1(M)-b_1(N)$ for manifolds $(M,g)$ and $(N,h)$
satisfying the assumption of Theorem~\ref{thm: main1} and $\delta\le
10^{-3}\bar{\iota}_{hr}$. In this section, we let $M\in \mathcal{M}_{Rc}(m)$
and $N\in \mathcal{M}_{Rm}(k,D,v)$, and assume that there is a
$10^{-1}\delta$-Gromov-Hausdorff approximation $\Phi: M\to N$ with $\delta\in
(0,10^{-3}\bar{\iota}_{hr})$. We put the following notations for any
$\varepsilon>0$: we say that two curves $c_0,c_1: [0,1]\to M$ are
$\varepsilon$-close to each other if $\sup_{t\in [0,1]} d(c_0(t),c_1(t))<2
\varepsilon$; also, for any curve $c:[0,1]\to M$ we let $c^{-1}$ denote inverse
curve $c^{-1}(t):=c(1-t):[0,1]\to M$. Moreover, for any curve $c:[0,1]\to M$,
we say that a curve $\bar{c}:[0,1]\to N$ is $\delta$-\emph{approximating} if
$\sup_{t\in [0,1]} d_h\left(\bar{c}(t), \Phi(c(t))\right)<\delta$ --- notice
that $\Phi$ is not necessarily continuous and so we cannot directly take
$\Phi(c)$ as a $\delta$-approximating loop, but for any curve in $M$, it is
easy to see that a $\delta$-approximating curve in $N$ always exists. 

To see this, we just let $0=t_0<t_1<\cdots<t_n=1$ be a fine enough partition of
$[0,1]$ so that $\left|c|_{[t_{i-1},t_i]}\right|\le 10^{-1}\delta$ for
each $i=1,\ldots,n$, pick $y_i\in B_h(\Phi(c(t_i)),10^{-1}\delta)$ and let
$\bar{\mu}_i$ be a minimal geodesic connecting $y_{i-1}=\bar{\mu}_i(0)$ to
$y_i=\bar{\mu}_i(1)$ (we could choose $\bar{\mu}_1(0)=\bar{\mu}_n(1)$ when $c$
is a loop); it is easy to see that
 \begin{align}\label{eqn: muba_i}
 \begin{split}
 \left|\bar{\mu}_i\right|\ \le\ &d_h\left(y_{i-1},\Phi(c(t_{i-1}))\right)
 +d_h\left(y_i,\Phi(c(t_i))\right)
 +d_h\left(\Phi(c(t_{i-1})),\Phi(c(t_i))\right)\\
 \le\ &\frac{3}{10}\delta+d_g(c(t_{i-1}),c(t_i))\ \le\
 \frac{3}{10}\delta+\left|c|_{[t_{i-1},t_i]}\right|\ \le\
 \frac{2}{5}\delta;
 \end{split}
 \end{align}
 forming the loop $\bar{c}:=\bar{\mu}_1\ast
 \bar{\mu}_2\ast\cdots\ast\bar{\mu}_n$, it is easily seen that for any $t\in
 [0,1]$, say, $t\in [t_{i-1},t_i]$,
 \begin{align}\label{eqn: muba_dist}
 \begin{split}
 d_h\left(\Phi(c(t)),\bar{c}(t)\right)\ =\
 &d_h\left(\Phi(c(t)),\bar{\mu}_i(t)\right)\ \le\
 d_h\left(\Phi(c(t)), y_{i-1}\right) +\left|\bar{\mu}_i\right|\\
 \le\ &\left|c|_{[t_{i-1},t_i]}\right|+\frac{3}{5}\delta\ \le\
 \frac{7}{10}\delta.
 \end{split}
 \end{align}
Therefore, $\bar{c}$ is the desired $\delta$-approximating curve of
$c$. Since the harmonic radii at all points of $N$ are bounded below by
$\bar{\iota}_{hr} \ge 10^3\delta$, we also notice that two $\delta$-approximating
loops for a given approximating loop determines the same homology class in
$H_1(N;\mathbb{Z})$.

We now discuss the finitely generated abelian group
$H_1^{\delta}(M;\mathbb{Z})$, consisting of homology classes generated by
geodesic loops of length not exceeding $10\delta$. As a basic property, we
notice that loops that are $\delta$-close to each other in $M$ define the same
first homology class modulo $H_1^{\delta}(M;\mathbb{Z})$:
\begin{lemma}\label{lem: loop_close}
Let $\gamma_0:[0,1]\to M$ be a loop formed by connecting geodesic segments of
lengths not exceeding $\delta$, and let $\gamma_1:[0,1]\to M$ be another loop
which is $2\delta$-close to $\gamma_0$, then
\begin{align*} 
[\![\gamma_0]\!]\ \equiv\ [\![\gamma_1]\!] \mod H_1^{\delta}(M;\mathbb{Z}).
\end{align*}
\end{lemma}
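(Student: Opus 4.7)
The plan is to realize $[\![\gamma_0]\!] - [\![\gamma_1]\!]$ as a sum of short loop classes in $H_1^{\delta}(M;\mathbb{Z})$ by bridging the two curves with minimal geodesics at finely chosen sample times, and then invoking a telescoping identity of singular $1$-chains.

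First, I would choose a partition $0 = t_0 < t_1 < \cdots < t_n = 1$ of $[0,1]$ containing the breakpoints of $\gamma_0$ and fine enough that $\left|\gamma_1|_{[t_{i-1},t_i]}\right| \le \delta$ for every $i$; since the original geodesic pieces of $\gamma_0$ already have length at most $\delta$, any such refinement still satisfies $\left|\gamma_0|_{[t_{i-1},t_i]}\right| \le \delta$. For each $i = 0, 1, \ldots, n$, let $\beta_i$ be a minimal geodesic from $\gamma_0(t_i)$ to $\gamma_1(t_i)$; since $\gamma_0(0) = \gamma_0(1)$ and $\gamma_1(0) = \gamma_1(1)$, I may take $\beta_0 = \beta_n$. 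The hypothesis that $\gamma_1$ is $2\delta$-close to $\gamma_0$ gives $|\beta_i| < 4\delta$.

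Next, I form the ``quadrilateral'' loops
\begin{align*}
\sigma_i\ :=\ \gamma_0|_{[t_{i-1},t_i]}\ \ast\ \beta_i\ \ast\ \left(\gamma_1|_{[t_{i-1},t_i]}\right)^{-1}\ \ast\ \beta_{i-1}^{-1}
\end{align*}
for $i = 1, \ldots, n$, each based at $\gamma_0(t_{i-1})$. The length bound $|\sigma_i| < \delta + 4\delta + \delta + 4\delta = 10\delta$ places every $[\![\sigma_i]\!]$ inside $H_1^{\delta}(M;\mathbb{Z})$ by definition.

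Finally, I would verify the telescoping identity at the level of singular $1$-chains,
\begin{align*}
\sum_{i=1}^n \sigma_i\ =\ \sum_{i=1}^n \gamma_0|_{[t_{i-1},t_i]}\ -\ \sum_{i=1}^n \gamma_1|_{[t_{i-1},t_i]}\ +\ (\beta_n - \beta_0),
\end{align*}
in which the bridge term vanishes by our choice $\beta_0 = \beta_n$, while the two remaining sums are homologous to $\gamma_0$ and $\gamma_1$ respectively via the standard singular subdivision. Passing to $H_1(M;\mathbb{Z})$ then yields $[\![\gamma_0]\!] - [\![\gamma_1]\!] = \sum_i [\![\sigma_i]\!]$, which is the desired congruence modulo $H_1^{\delta}(M;\mathbb{Z})$. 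I do not anticipate a serious obstacle; the only care needed is to refine the partition enough to control the length of each $\sigma_i$ and to impose the matching boundary condition $\beta_0 = \beta_n$ so the bridges cancel cleanly in the $1$-chain identity.
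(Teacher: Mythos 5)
Your proof is correct and follows essentially the same bridging argument as the paper: connect the two loops by minimal geodesics at finely spaced sample times and write $\gamma_0-\gamma_1$ as a sum of short $1$-cycles, each of length at most $10\delta$ and hence representing a class in $H_1^{\delta}(M;\mathbb{Z})$. The only cosmetic difference is that you cut the region between the loops into quadrilaterals at every fine sample time, whereas the paper uses a fan of triangles emanating from the breakpoints of $\gamma_0$; both decompositions yield the same telescoping cycle identity and length bounds within the $10\delta$ threshold.
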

\begin{proof}
Suppose $\gamma_0=\mu_1\ast\mu_2\ast\cdots \ast\mu_l$ with $\mu_j$
being minimal geodesics in $M$, connecting $\gamma_0(s_{j-1})$ to
$\gamma_0(s_j)$ for $j=1,\ldots,l$, and $\left|\mu_j\right|
=\left|\gamma_0|_{[s_{j-1},s_j]}\right|\le \delta$; clearly
$\gamma_0(s_0)=\gamma_0(s_l)$. We also subdivide each $[s_{j-1},s_j]$
sufficiently fine by inserting $t_i$ so that
$\left|\gamma_1|_{[t_i,t_{i+1}]}\right|\le \delta$. We denote $t_{i_j}=s_j$, and
set
\begin{align*}
I_j\ :=\ \left\{0\le i\le n:\ t_i\in [s_{j-1},s_j)\right\}\ \text{for}\
j=1,\ldots,l.
\end{align*}
So our notation becomes
\begin{align*}
0=t_{i_0}=s_0<t_1<\cdots<t_{i_{j-1}}=s_{j-1}<t_{i_{j-1}+1}
<\cdots<t_{i_j-1}<t_{i_j}=s_j<\cdots<t_{i_l}=t_n=s_{l}=1,
\end{align*}
showing $I_j=\left\{i_{j-1},i_{j-1}+1,\ldots,i_j-1 \right\}$ in the middle.

For each $j=1,\ldots,l$ and $i\in I_j\cup\{i_{j-1}-1\}$ (with $i_0-1=i_l-1$),
connect $\gamma_0(s_{j-1})=:\mu_{j-1,i}(0)$ to $\gamma_1(t_i)=:\mu_{j-1,i}(1)$
by a minimal geodesic $\mu_{j-1,i}$, and we have
\begin{align*}
\left|\mu_{j-1,i}\right|\ \le\ d_g(\gamma_0(s_{j-1}),\gamma_0(t_i))
+d_g(\gamma_0(t_i),\gamma_1(t_i))\ 
\le\ \max\left\{\left|\mu_{j-1}\right|,\left|\mu_j\right|\right\}+2\delta\ \le\
3\delta.
\end{align*}
Notice that for $j=1,\ldots,l$, each $i_j-1$ is ``double-booked'' ---
 $\mu_{j-1,i_j-1}(1) =\gamma_1(t_{i_j-1}) =\mu_{j,i_j-1}(1)$. 
 
 With the convention $i_{-1}=0$, we then define a family of singular $1$-cycles
 as 
 \begin{align*}
\sigma_j\ &:=\ \mu_{j}\ast \mu_{j,i_j-1}\ast \mu_{j-1,i_j-1}^{-1} \quad
\text{for each}\ j=1,\ldots,l;\\
 \sigma_{j,i}\ &:=\ \mu_{j-1,i}\ast
\gamma_1|_{[t_{i-1},t_i]}^{-1}\ast \mu_{j-1,i-1}^{-1}\quad \text{for each}\
i\in I_j.
\end{align*}
 Clearly we have $|\sigma_j|\le 7\delta$ and $|\sigma_{i,j}|\le 7\delta$ for all
 possible $i$ and $j$. Moreover, from the construction it
 is clear that
 \begin{align*}
 \gamma_0-\gamma_1\ =\ \sum_{j=1}^l\left(\sigma_j+\sum_{i\in
 I_j}\sigma_{j,i}\right),
 \end{align*}
implying the claim of the lemma, as the right-hand side defines an element in
$H_1^{\delta}(M;\mathbb{Z})$.
\end{proof}

This lemma enables us to replace any loop in $M$ with one that we could
construct bare handedly with an error in $H_1^{\delta}(M;\mathbb{Z})$.
Actually, our basic principle predicts that for any loop $\gamma$ in $M$, if
its $\delta$-approximating loop is trivial in $H_1(N;\mathbb{Z})$, then we must
have $[\![\gamma]\!]\in H_1^{\delta}(M;\mathbb{Z})$. Intuitively speaking, we
expect deformations of $\delta$-approximating loops in $N$ to produce
corresponding deformations of the original loops in $M$ modulo loops of lengths
comparable to the Gromov-Hausdorff distance between $M$ and $N$. We now explain
a relatively simple case:
\begin{lemma}\label{lem: homotopy}
If $\gamma_0$ and $\gamma_1$ are two geodesic loops in $M$, such that the
$\delta$-approximating loops $\bar{\gamma}_0$ and $\bar{\gamma}_1$ are
homotopic to each other, then $[\![\gamma_0]\!]\equiv [\![\gamma_1]\!]\mod 
H_1^{\delta}(M;\mathbb{Z})$.
\end{lemma}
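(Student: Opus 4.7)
My plan is to use the homotopy $H\colon S^1\times[0,1]\to N$ between $\bar{\gamma}_0$ and $\bar{\gamma}_1$ as a scaffolding to build a singular $2$-chain in $M$, whose boundary differs from $\gamma_0-\gamma_1$ only by a sum of $1$-cycles each of length $\le 10\delta$. Concretely, I would proceed in four steps.

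First, by uniform continuity of $H$, I would choose a sufficiently fine grid
$$0=s_0<s_1<\cdots<s_n=1,\qquad 0=t_0<t_1<\cdots<t_N=1$$
on $S^1\times[0,1]$ so that each closed cell $[s_{i-1},s_i]\times[t_{j-1},t_j]$ has $H$-image of diameter at most $\frac{1}{10}\delta$ in $N$. On the two boundary circles I would further refine so that both $\left|\gamma_0|_{[s_{i-1},s_i]}\right|\le \frac{1}{10}\delta$ and $\left|\gamma_1|_{[s_{i-1},s_i]}\right|\le\frac{1}{10}\delta$ for every $i$.

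Second, I would assign each grid vertex $(s_i,t_j)$ a lift $p_{ij}\in M$. For $j=0$ set $p_{i,0}:=\gamma_0(s_i)$, and for $j=N$ set $p_{i,N}:=\gamma_1(s_i)$; for interior vertices pick any $p_{ij}\in M$ with $d_h(\Phi(p_{ij}),H(s_i,t_j))<\frac{1}{10}\delta$, which exists because $\Phi$ is a $10^{-1}\delta$-Gromov--Hausdorff approximation. A direct estimate, using the $\delta$-approximating property of $\bar{\gamma}_0,\bar{\gamma}_1$ together with the GH-approximation bound on $\Phi$, shows that adjacent lifts satisfy $d_g(p_{ij},p_{i\pm 1,j})<\delta$ and $d_g(p_{ij},p_{i,j\pm 1})<\delta$. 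Then I connect every pair of adjacent lifts by a minimal geodesic segment $\mu^{hor}_{ij},\mu^{vert}_{ij}\subset M$ of length at most $\delta$.

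Third, for each grid cell I form the geodesic quadrilateral $\sigma_{ij}$ obtained by going around its four boundary edges in $M$; since every edge has length $\le\delta$, $|\sigma_{ij}|\le 4\delta\le 10\delta$, and hence $[\![\sigma_{ij}]\!]\in H_1^{\delta}(M;\mathbb{Z})$. Summing these quadrilaterals with consistent orientation, the interior edges cancel telescopically, and the boundary of the total $2$-chain equals the difference between the broken-geodesic loop $\tilde{\gamma}_1:=\mu^{hor}_{1,N}\ast\cdots\ast\mu^{hor}_{n,N}$ along the top row and the analogous loop $\tilde{\gamma}_0$ along the bottom row. That is,
\begin{align*}
[\![\tilde{\gamma}_0]\!]-[\![\tilde{\gamma}_1]\!]\ \equiv\ 0 \mod H_1^{\delta}(M;\mathbb{Z}).
\end{align*}

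Fourth, since $p_{i,0}=\gamma_0(s_i)$ and the partition is fine enough that $\left|\gamma_0|_{[s_{i-1},s_i]}\right|\le\frac{1}{10}\delta$, the broken geodesic loop $\tilde{\gamma}_0$ is built from edges of length $\le\delta$ and is trivially $2\delta$-close to $\gamma_0$; Lemma~\ref{lem: loop_close} therefore yields $[\![\tilde{\gamma}_0]\!]\equiv[\![\gamma_0]\!]\mod H_1^{\delta}(M;\mathbb{Z})$, and symmetrically for $\gamma_1$. Combining with the previous step gives the claim.

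The main obstacle I anticipate is the bookkeeping for the sides $t=0$ and $t=N$: one must carefully coordinate the choice of preimages so that the ``top'' and ``bottom'' broken loops are genuinely close to the original $\gamma_0$ and $\gamma_1$ in the sense required by Lemma~\ref{lem: loop_close}, which is why I pre-specify $p_{i,0}$ and $p_{i,N}$ on $\gamma_0$ and $\gamma_1$ rather than choosing arbitrary GH-preimages. The rest is routine chasing of quadrilateral estimates analogous to~(\ref{eqn: muba_i})--(\ref{eqn: muba_dist}).
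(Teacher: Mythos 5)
Your proposal is correct and follows essentially the same route as the paper: subdivide the homotopy into small cells, lift the grid vertices to $M$ (with the boundary rows placed on $\gamma_0$ and $\gamma_1$), join adjacent lifts by short minimal geodesics, observe that each resulting quadrilateral is a $1$-cycle of length well under $10\delta$ and hence lies in $H_1^{\delta}(M;\mathbb{Z})$, and then compare the boundary broken loops to $\gamma_0,\gamma_1$ via Lemma~\ref{lem: loop_close}. The only quibble is numerical: for edges meeting the rows $j=0$ and $j=N$ the bound $d_g(p_{ij},p_{i,j\pm1})<\delta$ should be relaxed (the approximating loops are only $\delta$-close under $\Phi$, giving roughly $\tfrac{13}{10}\delta$), but the quadrilateral lengths remain far below $10\delta$, so the argument is unaffected.
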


\begin{proof}
Let $\bar{\gamma}_0$ and $\bar{\gamma}_1$ be $\delta$-approximating loops
of  $\gamma_0$ and $\gamma_1$, respectively. Let $H:[0,1]^2\to N$ be a homotopy
between $\bar{\gamma}_0$ and $\bar{\gamma}_1$, with $H(0,-)=\bar{\gamma}_0$,
$H(1,-)=\bar{\gamma}_1$ and $H(-,0)=H(-,1)$. By the compactness of $[0,1]^2$,
we may let $n$ be so large that $\diam_hH([\frac{i}{n},\frac{i+1}{n}]\times
[\frac{j}{n},\frac{j+1}{n}])<10^{-1}\delta$. Let us define the paths in $N$ by
$\bar{\mu}_{i,j}(t):=H(\frac{i}{n},\frac{j+t}{n})$ and
$\bar{\nu}_{i,j}(t):=H(\frac{i+t}{n},\frac{j}{n})$ for 
$i,j=0,1,\ldots,n-1$. Now we could find $p_{i,j}\in M$ such that
$\Phi(p_{i,j})\in B_h(H(i,j),10^{-1}\delta)$; clearly, we have 
\begin{align}
\label{eqn: p_ijkl}
\begin{split}
d_g\left(p_{i,j},p_{k,l}\right)\ \le\
&d_h\left(\Phi(p_{i,j}),\Phi(p_{k,l})\right)+10^{-1}\delta\\
\le\ &d_h\left(H(i,j),H(k,l) \right) +d_h\left(H(i,j), \Phi(p_{i,j})\right)
+d_h\left(H(k,l),\Phi(p_{k,l}) \right)+10^{-1}\delta\\
 <\ &\frac{2}{5}\delta
\end{split}
\end{align} 
as long as $\max\{|i-k|,|j-l|\}\le 1$.
Since $\gamma_0$ and $\gamma_1$ already exist in $M$, we could assume that
$\{p_{0,j}\}\subset \gamma_0([0,1])$ and $\{p_{n,j}\}\subset \gamma_1([0,1])$.

We could moreover find minimal geodesics $\mu_{i,j}$ connecting 
$\mu_{i,j}(0)=p_{i,j}$ to $\mu_{i,j}(1)=p_{i,j+1}$, as well as $\nu_{i,j}$
with $\nu_{i,j}(0)=p_{i,j}$ and $\nu_{i,j}(1)=p_{i+1,j}$. We now form the loops
$\sigma_{i,j}:=\mu_{i,j}\ast \nu_{i,j+1}\ast \mu_{i+1,j}^{-1}\ast
\nu_{i,j}^{-1}$ for all $i,j=0,1,\ldots,n-1$. By (\ref{eqn: p_ijkl}) it is clear
that
\begin{align}\label{eqn: sigma_length_homotopy}
\left|\sigma_{i,j}\right|\ \le\ \left|\mu_{i,j}\right| +\left|\nu_{i,j+1}\right|
+\left|\mu_{i+1,j}\right| +\left|\nu_{i,j}\right|\ \le\ \frac{8}{5}\delta.
\end{align}
Moreover, it is easily seen that the loop $\sigma_{i,j}$ have its image
contained within $B_g(p_{i,j},\frac{4}{5}\delta)$, for all $i,j=0,1,\ldots,n-1$.

We also notice that the boundary loop $\gamma_0':=
\mu_{0,0}\ast\mu_{0,1}\ast\cdots \ast\mu_{0,n-1}$ is $2\delta$-close to the
original loop $\gamma_0$, as we now check:
 For any $t\in [0,1]$, say $t\in [\frac{j}{n},\frac{j+1}{n}]$, then by
 (\ref{eqn: p_ijkl}) and the choice that $\Phi(p_{0,j})\in
 B_h(H(0,j),10^{-1}\delta)$ we have
\begin{align}\label{eqn: 2delta_close}
\begin{split}
d_g\left(\gamma_0(t), \gamma_0'(t)\right)\ =\ 
&d_g\left(\gamma_0(t),\mu_{0,j}(t)\right)\ \le\
d_g\left(\gamma_0(t),p_{0,j}\right) +\left|\mu_{0,j}\right|\\ 
\le\ &d_h\left(\Phi(\gamma_0(t)),\Phi(p_{0,j}) \right) + \frac{1}{2}\delta\\
\le\ &d_h\left(\Phi(\gamma_0(t)),\bar{\gamma}_0(t) \right)
+d_h\left(\bar{\gamma}_0(t),\Phi(p_{0,j}) \right)+\frac{1}{2}\delta\\
 \le\ &\delta+d_h\left(\bar{\gamma}_0(t),H(0,j) \right)+\frac{3}{5}\delta\ 
<\ 2\delta.
\end{split}
\end{align}
And the same reasoning shows that $\gamma_1':= \mu_{n,0}\ast\mu_{n,1} \ast\cdots
\ast\mu_{n,n-1}$ is $2\delta$-close to $\gamma_1$. Therefore, by
Lemma~\ref{lem: loop_close}, we have $[\![\gamma_0]\!]-[\![\gamma_0']\!],
[\![\gamma_1]\!]-[\![\gamma_1']\!]\in H_1^{\delta}(M;\mathbb{Z})$.

Back in $N$, we also let $\bar{\sigma}_{i,j}=\bar{\mu}_{i,j}\ast
\bar{\nu}_{i,j+1}\ast \bar{\mu}_{i+1,j}^{-1}\ast \bar{\nu}_{i,j}^{-1}$, and
clearly $\partial H([\frac{i}{n},\frac{i+1}{n}]\times
[\frac{j}{n},\frac{j+1}{n}])=\bar{\sigma}_{i,j}$ with appropriate ordering. We
then have the following relation among singular $1$-cycles in $N$:
\begin{align}\label{eqn: homotopy}
\bar{\gamma}_1-\bar{\gamma}_0\ =\
\sum_{i,j}\bar{\sigma}_{i,j}.
\end{align}
Notice that the right-hand side of this equality vanishes because each
$\bar{\sigma}_{i,j}$ is a $1$-boundary in $N$, provided by the homotopy $H$, as
mentioned above. However, the relation (\ref{eqn: homotopy}), combinatorial in
nature, still holds for $\gamma'_{0}$, $\gamma'_1$ and $\sigma_{i,j}$ according
to our construction above, and thus 
\begin{align*}
 [\![\gamma_1']\!]-[\![\gamma_0']\!]\ =
\sum_{i,j}[\![\sigma_{i,j}]\!]\ \in\ H_1^{\delta}(M;\mathbb{Z}),
\end{align*}
since each $\left|\sigma_{i,j}\right|<10\delta$ by (\ref{eqn:
sigma_length_homotopy}). Finally, we have
\begin{align*}
[\![\gamma_1]\!]-[\![\gamma_0]\!]\ =\ \left([\![\gamma_1]\!]-[\![\gamma_1']\!]
\right)-\left([\![\gamma_0]\!]-[\![\gamma_0']\!]
\right)+\sum_{i,j}[\![\sigma_{i,j}]\!]\ \in\ H_1^{\delta}(M;\mathbb{Z}),
\end{align*}
whence the claim of the lemma.
\end{proof}

An immediate consequence concerns the case when $\pi_1(N)=0$: 
\begin{corollary}\label{cor: simply_connected}
If $N$ is simply connected, then $H_1(M;\mathbb{Z})
=H_1^{\delta}(M;\mathbb{Z})$.
\end{corollary}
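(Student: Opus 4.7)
The plan is to establish the non-trivial inclusion $H_1(M;\mathbb{Z})\subseteq H_1^{\delta}(M;\mathbb{Z})$; the reverse inclusion is by definition. I would fix an arbitrary class $[c]\in H_1(M;\mathbb{Z})$ represented by a (continuous) loop $\gamma:[0,1]\to M$, and first replace $\gamma$ by a piecewise geodesic loop $\gamma_0:[0,1]\to M$ whose successive segments each have length at most $\delta$. Concretely, I would pick a partition $0=t_0<t_1<\cdots<t_n=1$ fine enough that $\left|\gamma|_{[t_{i-1},t_i]}\right|\le\delta$, and connect $\gamma(t_{i-1})$ to $\gamma(t_i)$ by a minimal geodesic in $M$. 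By construction $\gamma_0$ is $\delta$-close to $\gamma$, so Lemma~\ref{lem: loop_close} gives $[\![\gamma]\!]\equiv[\![\gamma_0]\!]\mod H_1^{\delta}(M;\mathbb{Z})$, reducing the problem to showing $[\![\gamma_0]\!]\in H_1^{\delta}(M;\mathbb{Z})$.

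Next I would exploit the hypothesis $\pi_1(N)=0$. Construct a $\delta$-approximating loop $\bar{\gamma}_0:[0,1]\to N$ of $\gamma_0$ as in (\ref{eqn: muba_i})--(\ref{eqn: muba_dist}). Since $N$ is simply connected, $\bar{\gamma}_0$ is null-homotopic in $N$, hence homotopic to the constant loop at $\Phi(\gamma_0(0))$. Let $\gamma_1$ be the constant loop in $M$ at the basepoint $\gamma_0(0)$; its $\delta$-approximating loop in $N$ is (essentially) the constant loop at $\Phi(\gamma_0(0))$, which is trivially homotopic to $\bar{\gamma}_0$. Invoking Lemma~\ref{lem: homotopy} would then yield
\begin{align*}
[\![\gamma_0]\!]\ \equiv\ [\![\gamma_1]\!]\ =\ 0 \mod H_1^{\delta}(M;\mathbb{Z}),
\end{align*}
which combined with the reduction step finishes the proof.

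The only point that warrants attention is verifying that Lemma~\ref{lem: homotopy}, whose statement is phrased for two \emph{geodesic} loops, still applies when $\gamma_0$ is merely piecewise geodesic and $\gamma_1$ is constant. Inspecting its proof, the ``geodesic loop'' hypothesis is never used in an essential way: the grid-based construction of the auxiliary short loops $\sigma_{i,j}$ from the homotopy $H$, together with the estimates (\ref{eqn: p_ijkl})--(\ref{eqn: 2delta_close}), only require that $\gamma_0$ and $\gamma_1$ be continuous loops with chosen $\delta$-approximating loops in $N$ and that vertices $p_{0,j}$ (resp.\ $p_{n,j}$) can be placed on $\gamma_0$ (resp.\ $\gamma_1$), both of which hold trivially in the present setting. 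Thus no new technical input is needed beyond Lemmas~\ref{lem: loop_close} and \ref{lem: homotopy}, and the anticipated main obstacle is only to cleanly articulate this mild extension of the homotopy lemma to accommodate piecewise geodesic and constant loops.
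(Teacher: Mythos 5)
Your proposal is correct and follows essentially the route the paper intends, since the paper states Corollary~\ref{cor: simply_connected} as an immediate consequence of Lemma~\ref{lem: homotopy}: simple connectedness of $N$ makes every $\delta$-approximating loop null-homotopic, so every class of $H_1(M;\mathbb{Z})$ lands in $H_1^{\delta}(M;\mathbb{Z})$ via Lemma~\ref{lem: loop_close} and Lemma~\ref{lem: homotopy}. Your observation that the ``geodesic loop'' hypothesis in Lemma~\ref{lem: homotopy} is inessential is accurate (its proof only uses the $\delta$-approximating property and the grid construction), and one could equally avoid the issue by taking geodesic representatives furnished by the Hurewicz theorem and length minimization, as the paper does elsewhere.
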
 
We could also see that the first homology of $M$ at scale $\bar{\iota}_{hr}$ is
determined by $N$ up to $H_1^{\delta}(M;\mathbb{Z})$:

\begin{corollary}\label{cor: homotopy_nearby}
If $\gamma_0, \gamma_1: [0,1]\to M$ are two $\frac{1}{4}\bar{\iota}_{hr}$-close
loops, then $[\![\gamma_0]\!]\equiv [\![\gamma_1]\!]\mod H_1^{\delta}(M;\mathbb{Z})$.
\end{corollary}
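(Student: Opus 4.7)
The plan is to deduce the corollary from Lemma \ref{lem: homotopy} by producing a homotopy in $N$ between the $\delta$-approximating loops of $\gamma_0$ and $\gamma_1$. The key observation is that being $\tfrac{1}{4}\bar{\iota}_{hr}$-close in $M$ transfers, via the Gromov--Hausdorff approximation $\Phi$ and the construction of $\delta$-approximating loops, into pointwise closeness in $N$ at a scale strictly below the harmonic radius $\bar{\iota}_{hr}$.

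Concretely, first I would choose $\delta$-approximating loops $\bar{\gamma}_0, \bar{\gamma}_1 : [0,1] \to N$ of $\gamma_0$ and $\gamma_1$ as constructed in the preamble to this section. Using (\ref{eqn: muba_dist}) together with the fact that $\Phi$ is a $10^{-1}\delta$-Gromov--Hausdorff approximation, the triangle inequality yields, for every $t \in [0,1]$,
\begin{align*}
d_h\bigl(\bar{\gamma}_0(t), \bar{\gamma}_1(t)\bigr) \;\le\; \tfrac{7}{10}\delta + \bigl( d_g(\gamma_0(t), \gamma_1(t)) + \tfrac{1}{10}\delta \bigr) + \tfrac{7}{10}\delta \;<\; \tfrac{1}{2}\bar{\iota}_{hr} + \tfrac{3}{2}\delta \;<\; \bar{\iota}_{hr},
\end{align*}
where in the last step I have used $\delta < 10^{-3}\bar{\iota}_{hr}$ and the hypothesis that $\gamma_0, \gamma_1$ are $\tfrac{1}{4}\bar{\iota}_{hr}$-close.

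Next, I would construct the homotopy $H : [0,1]^2 \to N$ by defining $H(s,t)$ to be the time-$s$ point along the unique minimizing geodesic from $\bar{\gamma}_0(t)$ to $\bar{\gamma}_1(t)$. The existence, uniqueness, and continuous dependence on $t$ of these geodesics follow from the $C^{1,\frac{1}{2}}$ harmonic radius lower bound on $N$, which supplies a uniform injectivity radius at scale comparable to $\bar{\iota}_{hr}$. Since $\bar{\gamma}_0, \bar{\gamma}_1$ are loops, the endpoint conditions $H(s,0) = H(s,1)$ hold automatically by uniqueness, so $H$ is a free homotopy between $\bar{\gamma}_0$ and $\bar{\gamma}_1$. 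With $H$ at hand, Lemma \ref{lem: homotopy} applies and gives $[\![\gamma_0]\!] \equiv [\![\gamma_1]\!] \mod H_1^{\delta}(M;\mathbb{Z})$. The main (minor) obstacle is that Lemma \ref{lem: homotopy} is formally stated for geodesic loops, whereas $\gamma_0, \gamma_1$ in the corollary are only assumed continuous; however inspection of the proof of Lemma \ref{lem: homotopy} shows that the argument uses only the closeness of $\bar{\gamma}_0, \bar{\gamma}_1$ to $\gamma_0, \gamma_1$ through $\Phi$ together with a combinatorial cycle identity among singular $1$-chains, neither of which requires the original loops to be geodesic.
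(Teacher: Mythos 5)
Your argument is correct and follows essentially the paper's own route: both reduce to Lemma~\ref{lem: homotopy} by observing that the $\delta$-approximating loops in $N$ are pointwise close at a scale below $\bar{\iota}_{hr}$ and hence freely homotopic (the paper cites contractibility of $\bar{\iota}_{hr}$-balls, you make the homotopy explicit by geodesic interpolation), and your observation that Lemma~\ref{lem: homotopy} does not really need the loops in $M$ to be geodesic is accurate. The only gloss is attributing the existence, uniqueness and continuity of the connecting minimizing geodesics to the $C^{1,\frac{1}{2}}$ harmonic radius bound alone; strictly one should also invoke $\left|\sec_h\right|\le 1$ (equivalently, the uniform injectivity radius lower bound for $\mathcal{M}_{Rm}(k,D,v)$ via Cheeger's lemma), a standard point for this class.
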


\begin{proof}
This is because
$d_h(\Phi(\gamma_0(t)), \Phi(\gamma_1(t)))<\delta+\frac{\bar{\iota}_{hr}}{2}$,
the assumption $\delta \le 10^{-3}\bar{\iota}_{hr}$, and that the
$\bar{\iota}_{hr}$-balls in $N$ are contractible, as the harmonic radii of all
points are at least $\bar{\iota}_{hr}$ on $N$.
\end{proof}

For a loop $c$ defined on $[0,1]$, we let $c^{\ast k}$ denote the $k$-fold
concatenation of itself, as a loop defined on $[0,k]$. Clearly, for any loop
$c$, the homology class $k[\![c]\!]$ can be represented by the loop $c^{\ast
k}$. We now upgrade Lemma~\ref{lem: homotopy} by showing the same results for
homologous loops in $N$:
\begin{lemma}\label{lem: homology_0}
If $\gamma_1,\ldots,\gamma_l$ are geodesic loops in $M$ with
$\delta$-approximating loops $\bar{\gamma}_1,\ldots,\bar{\gamma}_l$ in $N$, such
that there is a vanishing $\mathbb{Z}$-linear combination
$k_1[\![\bar{\gamma}_1]\!] +\cdots+k_l[\![\bar{\gamma}_l]\!] = 0 \in
H_1(N;\mathbb{Z})$, then we have
\begin{align*}
k_1[\![\gamma_1]\!]+\cdots+k_l[\![\gamma_l]\!]\ \in\ H_1^{\delta}(M;\mathbb{Z}).
\end{align*}
\end{lemma}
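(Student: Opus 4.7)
The plan is to bootstrap from the homotopy case (Lemma~\ref{lem: homotopy}) to the homology case, using the Hurewicz theorem to recast the null-homology assumption as a product-of-commutators expression at the level of $\pi_1$.

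First I would reduce to the case of a single null-homologous loop in $N$. Fix a basepoint $p_\ast \in M$, set $\bar p_\ast \in N$ close to $\Phi(p_\ast)$, and for each $i$ pick a minimal geodesic $\tau_i$ in $M$ from $p_\ast$ to $\gamma_i(0)$, together with a $\delta$-approximating path $\bar\tau_i$ in $N$ from $\bar p_\ast$ to $\bar\gamma_i(0)$. Setting $\hat\gamma_i := \tau_i \ast \gamma_i \ast \tau_i^{-1}$ and $\hat{\bar\gamma}_i := \bar\tau_i \ast \bar\gamma_i \ast \bar\tau_i^{-1}$, form
\begin{align*}
c_M\ :=\ \hat\gamma_1^{\ast k_1} \ast \cdots \ast \hat\gamma_l^{\ast k_l},\qquad c_N\ :=\ \hat{\bar\gamma}_1^{\ast k_1} \ast \cdots \ast \hat{\bar\gamma}_l^{\ast k_l}.
\end{align*}
By construction $c_N$ is a $\delta$-approximating loop of $c_M$, while $[\![c_M]\!] = \sum_i k_i [\![\gamma_i]\!]$ in $H_1(M;\mathbb{Z})$ (the $\tau_i^{\pm 1}$ cancel in homology) and $[\![c_N]\!] = \sum_i k_i [\![\bar\gamma_i]\!] = 0$ in $H_1(N;\mathbb{Z})$.

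By the Hurewicz theorem, $H_1(N;\mathbb{Z}) = \pi_1(N, \bar p_\ast)^{ab}$, so $[c_N] \in \pi_1(N, \bar p_\ast)$ lies in the commutator subgroup and is therefore homotopic to a finite product $\prod_{j=1}^J [a_j, b_j]$ for some loops $a_j, b_j : (\mathbb{S}^1, 1) \to (N, \bar p_\ast)$. Mimicking the discretization used in the proof of Lemma~\ref{lem: homotopy} (where the vertices $p_{i,j}$ and edges $\mu_{i,j}, \nu_{i,j}$ were built), I would subdivide each $a_j, b_j$ into segments of length $\le 10^{-1}\delta$, lift the vertices to $M$ via an approximate inverse of $\Phi$, and connect them by minimal geodesics in $M$ to produce piecewise-geodesic loops $\alpha_j, \beta_j$ in $M$ based at $p_\ast$. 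A $\delta$-approximating loop of $\alpha_j$ (resp.\ $\beta_j$) is then $\tfrac{1}{4}\bar{\iota}_{hr}$-close to $a_j$ (resp.\ $b_j$) and hence homotopic to it in $N$, by the contractibility of $\bar{\iota}_{hr}$-balls used in Corollary~\ref{cor: homotopy_nearby}. Setting $c'_M := \prod_{j=1}^J [\alpha_j, \beta_j]$, a $\delta$-approximating loop of $c'_M$ in $N$ is homotopic to $\prod_j [a_j, b_j]$, and hence to $c_N$.

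Now Lemma~\ref{lem: homotopy}, applied after subdividing both $c_M$ and $c'_M$ into sufficiently short geodesic pieces (so that the proof, which only uses short-segment subdivision via Lemma~\ref{lem: loop_close}, goes through), yields
\begin{align*}
[\![c_M]\!]\ \equiv\ [\![c'_M]\!] \pmod{H_1^{\delta}(M;\mathbb{Z})}.
\end{align*}
Since $H_1(M;\mathbb{Z})$ is abelian, each commutator $[\alpha_j,\beta_j]$ is null-homologous in $M$, so $[\![c'_M]\!] = 0$. Combining, $\sum_i k_i [\![\gamma_i]\!] = [\![c_M]\!] \in H_1^{\delta}(M;\mathbb{Z})$, as claimed.

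The main obstacle I anticipate is the lifting step in the second paragraph: the commutator loops $a_j, b_j$ produced by Hurewicz carry no a priori length bound, so the discretization and minimal-geodesic construction in $M$ must be performed uniformly, and one must verify---using only the harmonic-radius lower bound $\bar{\iota}_{hr}$ on $N$ together with the Gromov--Hausdorff closeness---that the resulting $\delta$-approximating loops are not merely close to $a_j, b_j$ but genuinely homotopic to them. This is the same mechanism underlying Lemma~\ref{lem: homotopy} and Corollary~\ref{cor: homotopy_nearby}, now to be adapted to loops of arbitrary length.
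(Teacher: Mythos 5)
Your proposal is correct, but it takes a genuinely different route from the paper's proof. The paper argues at the chain level: it takes singular $2$-simplices in $N$ witnessing $\sum_i k_i[\![\bar{\gamma}_i]\!]=0$, barycentrically subdivides until every simplex has diameter less than $10^{-1}\delta$, transfers the vertices to $M$ via the Gromov--Hausdorff approximation and joins them by minimal geodesics, and then reads off the same combinatorial boundary relation among the resulting geodesic triangles, each of length at most $2\delta$ and hence representing a class in $H_1^{\delta}(M;\mathbb{Z})$; Lemma~\ref{lem: loop_close} is used to replace $\gamma_i^{\ast k_i}$ by the transferred loops. You instead concatenate everything into a single null-homologous based loop, invoke the Hurewicz theorem to write its class in $\pi_1(N)$ as a product of commutators, transfer the commutator loops to $M$, and then quote Lemma~\ref{lem: homotopy} together with the fact that a product of commutators is null-homologous in $M$. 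Your reduction is conceptually appealing because it re-uses Lemma~\ref{lem: homotopy} instead of re-running the discretization for $2$-chains (which is what the paper does, in a construction exactly parallel to its converse Lemma~\ref{lem: homology}), and it makes the group-theoretic content explicit; the cost is precisely what you flag: the commutator loops $a_j,b_j$ carry no length bound, so you must establish that uniformly $2\delta$-close loops of arbitrary length in $N$ are homotopic --- this is true under the bounded geometry of $N$ (e.g. by a grid homotopy built from short connecting geodesics, using the contractibility of $\bar{\iota}_{hr}$-balls, i.e. the same mechanism as Lemma~\ref{lem: homotopy} run inside $N$) --- and you need some basepoint/parametrization bookkeeping plus mild constant-chasing so that $c_N$ and the concatenated approximating loop of $c'_M$ qualify as $\delta$-approximating loops in the paper's strict sense (forcing the endpoint of $\bar{\tau}_i$ to be $\bar{\gamma}_i(0)$ costs a constant factor, which is harmless since any two approximating loops of the same curve are uniformly close and hence homotopic in $N$). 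The paper's version avoids all of these points by keeping every transferred object of size $O(\delta)$, at the price of being less conceptual.
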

\begin{proof}
Clearly we only need to consider the case $k^2_1+\cdots+k^2_l\not=0$. By the
assumption we could find singular $2$-simplicies 
$\bar{\omega}_{abc}: \Delta^2\to N$ such that as singular $1$-cycles,
\begin{align*}
k_1\bar{\gamma}_1+\cdots+k_l\bar{\gamma}_l\ =\
\sum_{a,b,c}\partial \bar{\omega}_{abc}.
\end{align*}
By covering $\bar{\omega}_{abc}(\Delta^2)$ by $20^{-1}\delta$-balls and the
compactness of $\Delta^2$, we may perform barycentric subdivision and
guarantee that each $\diam_h\bar{\omega}_{abc}(\Delta^2)< 10^{-1}\delta$. Each
$\partial \bar{\omega}_{abc}$ is a $1$-boundary in $N$, and we denote
$\partial \bar{\omega}_{abc}=\bar{\mu}_{ab}+\bar{\mu}_{bc} -\bar{\mu}_{ac}$,
with the singular $1$-simplicies $\bar{\mu}_{ab},\bar{\mu}_{bc},
\bar{\mu}_{ac}: [0,1]\to N$ connecting the corresponding vertices
$\bar{p}_a,\bar{p}_b,\bar{p}_c\in N$ oriented in accordance with the
subscripts. Clearly, $d_h(\bar{p}_a,\bar{p}_b)<10^{-1}\delta$, and notice that
the above equation becomes
\begin{align}\label{eqn: homology_0}
k_1\bar{\gamma}_1+\cdots+k_l\bar{\gamma}_l\ =\
\sum_{a,b,c}\bar{\mu}_{ab}+\bar{\mu}_{bc}-\bar{\mu}_{ac},
\end{align}
and each $[\![\bar{\mu}_{ab}+\bar{\mu}_{bc}-\bar{\mu}_{ac}]\!]=0$ in
$H_1(N;\mathbb{Z})$ thanks to the singular $2$-simplex $\bar{\omega}_{abc}$.
Moreover, for each loop $\bar{\gamma}_i^{\ast k_i}:[0,k_i]\to N$, we may assume
it is a concatenation of some $\bar{\mu}_{ab}$, i.e. there
is a sub-collection $\left\{\bar{\mu}_{a^i_{0}a^i_{1}},\ldots,
\bar{\mu}_{a^i_{n_i-1}a^i_{n_i}}\right\}$ of the singular $1$-simplicies
appeared on the right-hand side of (\ref{eqn: homology_0}), such that
$\bar{\gamma}_i^{\ast k_i}=\bar{\mu}_{a^i_{0}a^i_{1}}\ast \cdots \ast
\bar{\mu}_{a^i_{n_i-1}a^i_{n_i}}$ with
$\bar{p}_{a^i_{0}}=\bar{p}_{a^i_{n_i}}$.

 We could then work as before to find $\{p_a\}\subset M$ such
that $d_h(\Phi(p_a),\bar{p}_a)<10^{-1}\delta$, and minimal geodesics $\mu_{ab}$
with $\mu_{ab}(0)=p_a$ and $\mu_{ab}(1)=p_b$. By the same argument leading to
(\ref{eqn: p_ijkl}), we know that $\left|\mu_{ab}\right|\le \frac{2}{5}\delta$
for all indices $a,b$. Moreover, let $\gamma_i'$ be the loop in $M$ formed by
concatenating those $\mu_{ab}$'s such that $\bar{p}_a,\bar{p}_b$ are in
$\bar{\gamma}_i^{\ast k_i}$, then by the same way leading to the estimate
(\ref{eqn: 2delta_close}), we know that each loop $\gamma_i'$ is
$2\delta$-close to the loop $\gamma_i^{\ast k_i}$. Consequently, we have
$[\![\gamma_i']\!]\equiv k_i[\![\gamma_i]\!]\mod H_1^{\delta}(M;\mathbb{Z})$
for each $i=1,\ldots,l$, thanks to Lemma~\ref{lem: loop_close}.

We now consider the geodesic triangles $\sigma_{abc} :=\mu_{ab}\ast
\mu_{bc}\ast \mu_{ac}^{-1}$ as singular $1$-cycles in $M$. Clearly each
$\left|\sigma_{abc}\right|\le 2\delta$, and has its image contained in in
$B_g(p_a,\delta)$. Moreover, the above combinatorial relation (\ref{eqn:
homology_0}) implies that
\begin{align*}
[\![\gamma'_1]\!]+\cdots+[\![\gamma'_l]\!]\ =\
\sum_{a,b,c}[\![\sigma_{abc}]\!]\ \in H_1^{\delta}(M;\mathbb{Z}),
\end{align*}
whence the claim of the lemma, as $k_i[\![\gamma_i]\!]-[\![\gamma_i']\!]\in
H_1^{\delta}(M;\mathbb{Z})$ for each $i=1,\ldots,l$.
\end{proof}
We also have a certain inverse to this lemma:
\begin{lemma}\label{lem: homology}
If $\gamma_1,\ldots,\gamma_l:[0,1]\to M$ are geodesic loops, such that there is
a $\mathbb{Z}$-linear relation
\begin{align*}
k_1[\![\gamma_1]\!]+\cdots+k_l[\![\gamma_l]\!]\ \in\ H_1^{\delta}(M;\mathbb{Z}),
\end{align*}
then their $\delta$-approximating loops $\bar{\gamma}_1,\ldots,\bar{\gamma}_l$
in $N$, as constructed at the very beginning of the subsection, satisfy
\begin{align*}
k_1[\![\bar{\gamma}_1]\!]+\cdots+k_l[\![\bar{\gamma}_l]\!]\ =\ 0\ \in
H_1(N;\mathbb{Z}).
\end{align*}
\end{lemma}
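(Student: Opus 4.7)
The plan is to reduce the assertion to the fact that sufficiently small loops in $N$ are null-homologous in $H_1(N;\mathbb{Z})$. By the definition of $H_1^{\delta}(M;\mathbb{Z})$, the hypothesis furnishes finitely many geodesic loops $\eta_1,\ldots,\eta_r:[0,1]\to M$ with $|\eta_j|\le 10\delta$, integers $n_1,\ldots,n_r$, and a singular $2$-chain $\Omega=\sum_a c_a\omega_a$ in $M$, such that as singular $1$-cycles
\begin{align*}
k_1\gamma_1+\cdots+k_l\gamma_l-\sum_j n_j\eta_j\ =\ \partial\Omega.
\end{align*}

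First, I would dispatch the $\eta_j$ contributions. By the construction of $\delta$-approximating loops at the beginning of the section (cf. (\ref{eqn: muba_i})), each $\bar{\eta}_j$ is a concatenation of minimal geodesic segments of length $\le \tfrac{2}{5}\delta$, whose total length is bounded by $|\eta_j|+O(\delta)\le 11\delta \ll \bar{\iota}_{hr}$, since $\delta\le 10^{-3}\bar{\iota}_{hr}$. Consequently $\bar{\eta}_j$ lies in a single geodesic ball of radius $\bar{\iota}_{hr}$ in $N$, which is contractible thanks to the harmonic radius lower bound, so $[\![\bar{\eta}_j]\!]=0\in H_1(N;\mathbb{Z})$ for every $j$.

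Next, I would transfer $\Omega$ to a $2$-chain $\bar{\Omega}$ in $N$ whose boundary exhibits the desired homological relation. Barycentrically subdivide $\Omega$ until each $\omega_a:\Delta^2\to M$ has $g$-diameter less than $10^{-1}\delta$. To every vertex $v$ of the refined simplicial complex I would assign a point $\bar{p}_v\in N$ with $d_h(\Phi(\omega_a(v)),\bar{p}_v)<10^{-1}\delta$, insisting that whenever $v$ coincides with a sample point used to build $\bar{\gamma}_i$ or $\bar{\eta}_j$ along the boundary $\partial\Omega$, the choice of $\bar{p}_v$ matches the corresponding vertex on that approximating loop. Adjacent vertices are then joined by minimal geodesics, reusing the segments already present in $\bar{\gamma}_i$ or $\bar{\eta}_j$ on the boundary whenever possible; the estimate leading to (\ref{eqn: muba_i}) shows each such geodesic has length $\le \tfrac{2}{5}\delta$. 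For each refined $2$-simplex, the three chosen vertices lie within a common $\bar{\iota}_{hr}$-ball in $N$, so its three geodesic edges bound a singular $2$-simplex $\bar{\omega}_a:\Delta^2\to N$ obtained by coning in harmonic coordinates. Setting $\bar{\Omega}:=\sum_a c_a\bar{\omega}_a$, the boundary $\partial\bar{\Omega}$ differs from $\sum_i k_i\bar{\gamma}_i-\sum_j n_j\bar{\eta}_j$ only by a combination of geodesic triangles of diameter $<\bar{\iota}_{hr}$, each a $1$-boundary in $N$. Passing to homology and using the first step,
\begin{align*}
\sum_i k_i[\![\bar{\gamma}_i]\!]\ =\ \sum_j n_j[\![\bar{\eta}_j]\!]+[\![\partial\bar{\Omega}]\!]\ =\ 0\ \in\ H_1(N;\mathbb{Z}),
\end{align*}
as desired.

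The main obstacle will be the combinatorial bookkeeping in matching $\partial\bar{\Omega}$ with the prescribed approximating loops $\bar{\gamma}_i$ and $\bar{\eta}_j$ along $\partial\Omega$, and in verifying that the correction triangles all lie inside $\bar{\iota}_{hr}$-balls. The argument parallels the constructions of Lemma~\ref{lem: homotopy} and Lemma~\ref{lem: homology_0}, with $2$-chains in place of homotopies or explicit null-homologies there, so the same strategy (subdivide finely, sample carefully at vertices, connect via minimal geodesics, discard loops in harmonic balls) should go through without essential modification.
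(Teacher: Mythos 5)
Your proposal is correct and is essentially the paper's own argument: write the relation as a null-homology modulo short loops, subdivide everything into pieces of size $O(\delta)$, transfer the vertices to $N$ via $\Phi$ and join them by minimal geodesics, and kill each small correction cycle (including the approximations of the short generators) inside contractible $\bar{\iota}_{hr}$-balls, which is exactly how the paper produces the triangles $\bar{\sigma}_{abc}$, shows each is a $1$-boundary, and then concludes using that any two $\delta$-approximating loops of the same loop are homologous in $N$. The only difference is presentational: you assemble the coned-off triangles into an explicit $2$-chain $\bar{\Omega}$ and dispose of the short loops $\eta_j$ separately, whereas the paper keeps the bookkeeping at the level of the $1$-cycles $\sigma_{abc}$ and $\bar{\sigma}_{abc}$.
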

\begin{proof}
We could find a sufficiently large $n\in \mathbb{N}$ such that
$\left|\gamma_i|_{[\frac{j-1}{n},\frac{j}{n}]}\right|<10^{-1}\delta$ for each
$i=1,\ldots,l$ and $j=1,\ldots,n$. For each $i=1,\ldots,l$, we also let
$p_{i,j}:=\gamma_i^{\ast k_i}(\frac{j}{k_in})$, for $j=0,1,\ldots,k_in$. Notice
that $d_{g}(p_{i,j},p_{i,j-1})<10^{-1}\delta$ and
$p_{i,0}=p_{i,n}=p_{i,2n}=\cdots=p_{i,k_in}$. By the assumption, we know that
there are singular $1$-cycles $\sigma_{abc}$ such that
\begin{align}\label{eqn: homology_1}
\gamma_1^{\ast k_1}+\cdots +\gamma_l^{\ast k_l}\ =\ \sum_{a,b,c}\sigma_{abc}.
\end{align}
In particular, we have singular $1$-simplicies $\left\{\mu_{ab}:[0,1]\to
M\right\}$ such that $\sigma_{abc}=\mu_{ab}+\mu_{bc}-\mu_{ac}$, and let
$p_a:=\mu_{ab}(0)$ and $p_b:=\mu_{ab}(1)$ for all indicies $a$ and $b$. Without
loss of generality, we may assume that for each $i=1,\ldots,l$ and
$j=1,\ldots,k_in$, there is some $\mu_{a^i_{j-1}a^i_{j}}=\gamma_i^{\ast
k_i}|_{[\frac{j-1}{k_in}, \frac{j}{k_in}]}$. Obviously, $\mu_{a^i_{j-1}a^i_j}$
connects from $p_{a^i_{j-1}}=p_{i,j-1}$ to $p_{a^i_j}=p_{i,j}$ and
$\gamma_i=\mu_{a^i_0a^i_1}\ast\cdots \ast \mu_{a^i_{n-1}a^i_n}$.
 
We now pick $\{\bar{p}_a\}\subset N$ with $d_h(\Phi(p_a),\bar{p}_a)
<10^{-1}\delta$, and let $\bar{\mu}_{ab}$ be a minimal geodesic connecting
$\bar{p}_a=\bar{\mu}_{ab}(0)$ to $\bar{p}_b=\bar{\mu}_{ab}(1)$. Here we insist
that if $p_{a^i}=p_{b^i}\in M$, then we pick $\bar{p}_{a^i}=\bar{p}_{b^i}\in
N$. By the assumption that $\left|\mu_{ab}\right|\le 10^{-1}\delta$, we could
argue as in (\ref{eqn: muba_i}) to see that $|\bar{\mu}_{ab}|\le
\frac{2}{5}\delta$. 
For each $i=1,\ldots,l$ we define $\bar{\gamma}'_i:=\bar{\mu}_{a_0^ia_1^i}\ast
\bar{\mu}_{a_1^ia_2^i}\ast \cdots \ast \bar{\mu}_{a_{n_{i}-1}^ia_{n_i}^i}$ with
$a_{n_i}^i=a_0^i$, and (\ref{eqn: muba_dist}) shows that $\bar{\gamma}_i'$ is
a $\delta$-approximating loop of $\gamma_i^{\ast k_i}$. Notice that
$\bar{\gamma}_i:=\bar{\mu}_{a_0^ia_1^i}\ast \cdots \ast
\bar{\mu}_{a^i_{n-1}a^i_{n}}$ is also a $\delta$-approximating loop of
$\gamma_i$.

Now for any triple $(a,b,c)$, if $\sigma_{abc}$ appear on the right-hand side of
(\ref{eqn: homology_1}), we form the geodesic triangles
$\bar{\sigma}_{abc}:=\bar{\mu}_{ab}\ast \bar{\mu}_{bc}\ast\bar{\mu}_{ac}^{-1}$
in $N$, and regard each such $\bar{\sigma}_{abc}$ as a singular $1$-cycle in
$N$. According to (\ref{eqn: homology_1}), we then have the following equation
of singular $1$-cycles in $N$:
\begin{align}\label{eqn: bar_sigma_abc}
\bar{\gamma}'_1+\cdots +\bar{\gamma}'_l\ =\ \sum_{a,b,c}\bar{\sigma}_{abc}.
\end{align}
Moreover, it is clear that $|\bar{\sigma}_{abc}|\le 2\delta$, ensuring each
$\bar{\sigma}_{abc}\subset B_h(\bar{p}_a,\bar{\iota}_{hr})$ as $\delta\le
10^{-3}\bar{\iota}_{hr}$.
But since the harmonic radius at $\bar{p}_a$ is no less than $\bar{\iota}_{hr}$,
$B_h(\bar{p}_a,\bar{\iota}_{hr})$ is homeomorphic to an Euclidean ball, which is
contractible. By the Poincar\'e lemma, $\bar{\sigma}_{abc}$ is a $1$-boundary,
i.e. $\bar{\sigma}_{abc}=\partial \bar{\omega}_{abc}$ for some singular
$2$-simplex $\bar{\omega}_{abc}:\Delta^2\to B_h(\bar{p}_a,\bar{\iota}_{hr})$.
Therefore, the right-hand side of (\ref{eqn: bar_sigma_abc}) vanishes in
$H_1(N;\mathbb{Z})$. But the left-hand side represents
$\sum_ik_i[\![\bar{\gamma}_i]\!]$ and each
$\bar{\gamma}_i:=\bar{\gamma}_i'|_{[0,1]}$ is a $\delta$-approximation of
$[\![\gamma_i]\!]$.
\end{proof}
With the above understanding, we could now compare $\rank\ H_1(M;\mathbb{Z})$
and $b_1(M)-b_1(N)$. 
\begin{proposition}\label{prop: rank}
The shortest representatives of the torsion-free generators of
$H_1(M;\mathbb{Z})$ have lengths either $\le 10\delta$ or $\ge
10^{-1}\bar{\iota}_{hr}$.
Among these loops, we have a total number of $b_1(N)$ loops of length $\ge
10^{-1}\bar{\iota}_{hr}$, representing distinct torsion-free homology classes in
$H_1(M;\mathbb{Z})$, and making $\rank\ H_1(M;\mathbb{Z})\slash
H_1^{\delta}(M;\mathbb{Z}) =  b_1(N).$ Consequently, we have 
\begin{align}\label{eqn: rank}
\rank\ H_1^{\delta}(M;\mathbb{Z})\ =\ b_1(M)-b_1(N).
\end{align}
\end{proposition}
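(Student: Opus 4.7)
The plan is to realise the inclusion $H_1^{\delta}(M;\mathbb{Z})\subseteq H_1(M;\mathbb{Z})$ as the kernel of a surjective homomorphism onto $H_1(N;\mathbb{Z})$, so that the rank identity~(\ref{eqn: rank}) drops out of additivity of rank in short exact sequences of finitely generated abelian groups. Define $\Phi_\ast: H_1(M;\mathbb{Z})\to H_1(N;\mathbb{Z})$ by $\Phi_\ast[\![\gamma]\!]:=[\![\bar\gamma]\!]$, where $\bar\gamma$ is any $\delta$-approximating loop of $\gamma$ built as at the start of the section. Well-definedness on homology classes reduces to Lemma~\ref{lem: homotopy} together with the observation that two $\delta$-approximating loops of a fixed $\gamma$ differ only by $1$-cycles contained in contractible harmonic $\bar\iota_{hr}$-balls of $N$ (using $\delta\le 10^{-3}\bar\iota_{hr}$); additivity under concatenation is immediate. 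The identification $\ker\Phi_\ast=H_1^{\delta}(M;\mathbb{Z})$ is then a direct restatement of Lemma~\ref{lem: homology_0} for one inclusion and Lemma~\ref{lem: homology} for the reverse.

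For surjectivity I would lift a loop $\eta$ from $N$: subdivide $\eta$ so that consecutive values are within $10^{-1}\delta$ in $N$, pick an approximate preimage in $M$ of each vertex via the Gromov--Hausdorff approximation, and concatenate minimal geodesics in $M$ to form a loop $\gamma$. A $\delta$-approximating loop of $\gamma$ is then $O(\delta)$-close to $\eta$ vertex-by-vertex, hence homologous to $\eta$ in $N$ via the harmonic-ball contractibility. This yields $\Phi_\ast[\![\gamma]\!]=[\![\eta]\!]$ and completes the short exact sequence, giving~(\ref{eqn: rank}).

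For the length dichotomy and the construction of $b_1(N)$ long generators, I would assemble two bases. The torsion-free quotient of $H_1^{\delta}(M;\mathbb{Z})$ admits a $\mathbb{Z}$-basis $\{[\![\eta_j]\!]\}$ with representatives of length $\le 10\delta$ by definition. The surjectivity construction, applied to shortest geodesic representatives of a $\mathbb{Z}$-basis of $H_1(N;\mathbb{Z})/\mathrm{torsion}$, produces classes $[\![\gamma_1]\!],\dots,[\![\gamma_{b_1(N)}]\!]$ whose images under $\Phi_\ast$ are, by construction, linearly independent in $H_1(N;\mathbb{Z})/\mathrm{torsion}$, and hence $\mathbb{Z}$-linearly independent modulo $H_1^{\delta}(M;\mathbb{Z})$. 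Since any $N$-loop of length $<\bar\iota_{hr}$ is null-homotopic (being contained in a contractible harmonic ball), the shortest representatives $\bar\gamma_i$ satisfy $|\bar\gamma_i|\ge \bar\iota_{hr}$. To rule out any intermediate shortest length, suppose $\gamma$ is a loop in $M$ of length $\ell\in(10\delta,10^{-1}\bar\iota_{hr})$; the subdivision used in~(\ref{eqn: muba_i}) produces a $\delta$-approximating loop of length at most $4\ell+\tfrac{2}{5}\delta<\bar\iota_{hr}$, which is therefore null-homotopic in $N$, and Lemma~\ref{lem: homology_0} forces $[\![\gamma]\!]\in\ker\Phi_\ast=H_1^{\delta}(M;\mathbb{Z})$. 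Consequently a shortest representative of any torsion-free class outside $H_1^{\delta}(M;\mathbb{Z})$ --- in particular of each $[\![\gamma_i]\!]$ --- must have length $\ge 10^{-1}\bar\iota_{hr}$, while shortest representatives of elements of $H_1^{\delta}(M;\mathbb{Z})$ in our basis have length $\le 10\delta$.

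The main technical obstacle will be tracking the constants in the $\delta$-approximation construction sharply enough to guarantee the strict inequality $|\bar\gamma|<\bar\iota_{hr}$ when $|\gamma|<10^{-1}\bar\iota_{hr}$, and verifying that the two bases above fit together into a free $\mathbb{Z}$-basis of $H_1(M;\mathbb{Z})/\mathrm{torsion}$. Beyond these bookkeeping tasks, the argument is essentially a repackaging of the four preceding lemmas into the short exact sequence and the dichotomy.
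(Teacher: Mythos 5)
Your proposal assembles exactly the same ingredients as the paper --- the $\delta$-approximating construction, Lemmas~\ref{lem: loop_close}--\ref{lem: homology}, and the lifting of a geodesic basis of $H_1(N;\mathbb{Z})\slash Torsion$ back to loops in $M$ --- but repackages them as a single homomorphism $\Phi_\ast$ with kernel $H_1^{\delta}(M;\mathbb{Z})$ and image $H_1(N;\mathbb{Z})$, followed by rank additivity. The paper instead proves the two inequalities $\rank\ H_1(M;\mathbb{Z})\slash H_1^{\delta}(M;\mathbb{Z})\ge b_1(N)$ (independence of the lifted classes modulo $H_1^{\delta}$, via Lemma~\ref{lem: homology}) and $\le b_1(N)$ (every long generator is congruent modulo $H_1^{\delta}$ to a combination of the lifted classes plus torsion, via Lemma~\ref{lem: homology_0}) without ever introducing a map; your version, if carried out, gives the slightly stronger conclusion $H_1(M;\mathbb{Z})\slash H_1^{\delta}(M;\mathbb{Z})\cong H_1(N;\mathbb{Z})$, of which the paper only extracts the rank. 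The price of the repackaging is that you must verify that $\Phi_\ast$ is well defined and additive on homology classes, a step the paper's organization never requires, and it is precisely there that your justification is off: Lemma~\ref{lem: homotopy} runs in the wrong direction (its hypothesis concerns the approximating loops in $N$, its conclusion concerns classes in $M$ modulo $H_1^{\delta}$), so it cannot show that homologous loops in $M$ have homologous approximating loops in $N$. What you need is Lemma~\ref{lem: homology} applied to the relation $[\![\gamma_0]\!]-[\![\gamma_1]\!]=0\in H_1^{\delta}(M;\mathbb{Z})$ (and, for additivity, to $[\![\gamma_0]\!]+[\![\gamma_1]\!]-[\![\gamma_2]\!]$), together with the remark preceding Lemma~\ref{lem: loop_close} that any two $\delta$-approximating loops of a fixed loop determine the same class in $H_1(N;\mathbb{Z})$. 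With that substitution the exact sequence, and hence (\ref{eqn: rank}), goes through.

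Two smaller remarks. Your surjectivity step relies on the fact that two loops in $N$ that are pointwise $O(\delta)$-close (with $\delta\le 10^{-3}\bar{\iota}_{hr}$) are homologous; this is true for the same harmonic-radius reason as the remark above, but note the paper sidesteps it entirely by checking directly, as in (\ref{eqn: muba_dist}), that the chosen loop $\bar{\gamma}_i$ in $N$ is itself a $\delta$-approximating loop of the lifted loop $\gamma_i$, which you could also do. Finally, the ``two bases fit together'' bookkeeping you flag at the end (choosing representatives of length $\le 10\delta$ for a basis of the torsion-free part of $H_1^{\delta}$) is the same looseness already present in the paper's formulation of the dichotomy, so it is not a defect introduced by your approach; your length estimate $\approx 4\ell$ for the approximating loop of a loop of length $\ell<10^{-1}\bar{\iota}_{hr}$ serves the same purpose as the paper's containment of $\bar{\gamma}$ in $B_h(\Phi(\gamma(0)),\bar{\iota}_{hr})$, and either suffices.
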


\begin{proof}
If $\gamma$ represents a generator of $H_1(M;\mathbb{Z})$ and $|\gamma|
<10^{-1}\bar{\iota}_{hr}$, then it has a $\delta$-approximating loop
$\bar{\gamma}$ in $B_h(\Phi(\gamma(0)),\bar{\iota}_{hr})$, since
$\delta+10^{-1}\bar{\iota}_{hr}<\frac{1}{2}\bar{\iota}_{hr}$.
Since the harmonic radius of $\Phi(\gamma(0))$ is at least
$\bar{\iota}_{hr}$, it means that $B_h(\Phi(\gamma(0)),\bar{\iota}_{hr})$ is
contractible, and thus $\bar{\gamma}\simeq_N \overline{\Phi(\gamma(0))}$, the
constant loop based at $\Phi(\gamma(0))$. By Lemma~\ref{lem: homotopy}, we must
have $[\![\gamma]\!]\in H_1^{\delta}(M;\mathbb{Z})$.

Since $H_1(M;\mathbb{Z})$ is a finitely generated abelian group, so are 
$H_1^{\delta}(M;\mathbb{Z})$ and their quotients. To compute the rank of the
quotient group $H_1(M;\mathbb{Z})\slash H_1^{\delta}(M;\mathbb{Z})$, we notice
that a coset $[\![\gamma]\!]+H_1^{\delta}(M;\mathbb{Z})$ defines a torsion
element in the quotient group \emph{if and only if} $k[\![\gamma]\!]\in
H_1^{\delta}(M;\mathbb{Z})$ for some $k\in \mathbb{Z}$. By the Hurewicz theorem,
we could find $\bar{\gamma}_1,\ldots,\bar{\gamma}_{b_1(N)}:[0,1]\to N$, 
representing the distinct generators of $H_1(N;\mathbb{Z})\slash Torsion$, which
is a free $\mathbb{Z}$-module of rank $b_1(N)$. Subdividing $[0,1]$ by
$0=t_0<t_1<\cdots <t_n=1$ sufficiently fine, we could ensure
$\left|\bar{\gamma}_i|_{[t_{j-1},t_{j}]}\right|\le 10^{-1}\delta$ for each
$j=1,\ldots, n$. We then find $\{p_{i,j}\}\subset M$ such that
$d_h(\Phi(p_{i,j}),\bar{\gamma}_i(t_j))<10^{-1}\delta$ for each $i$ and $j$, 
let $\mu_{i,j}$ be a minimal geodesic connecting $p_{i,j-1}=\mu_{i,j}(0)$ to
$p_{i,j}=\mu_{i,j}(1)$, with $\mu_{i,n}(1)=\mu_{i,1}(0)$, and form the loops
$\gamma_i:=\mu_{i,1}\ast \mu_{i,2}\ast \cdots \ast \mu_{i,n}$ for each
$i=1,\ldots,b_1(N)$. Just done in (\ref{eqn: p_ijkl}) we see that
$\left|\mu_{i,j}\right|\le \frac{2}{5}\delta$, and for $t\in [t_{j-1},t_j]$ we
have
\begin{align*}
d_h(\Phi(\gamma_i(t)),\bar{\gamma}_i(t) )\ \le\ \left|\mu_{i,j}\right|+
\left|\bar{\gamma}_i|_{[t_{j-1},t_j]}\right|+\frac{1}{5}\delta\ \le\
\frac{7}{10}\delta.
\end{align*}
This implies that each $\bar{\gamma}_i$ is $\delta$-approximating $\gamma_i$. We
notice that $|\gamma_i|\ge 10^{-1}\bar{\iota}_{hr}$, because otherwise we must have
$[\![\gamma_i]\!]\in H_1^{\delta}(M;\mathbb{Z})$, making
$[\![\bar{\gamma}_i]\!]=0\in H_1(N;\mathbb{Z})$ by Lemma~\ref{lem: homology},
contradicting the choice of $[\![\gamma_i]\!]$ as a generator.

In the same vein, we could show that the homology classes
$[\![\gamma_1]\!],\ldots,[\![\gamma_{b_1(N)}]\!]$ are indeed
$\mathbb{Z}$-independent modulo $H_1^{\delta}(M;\mathbb{Z})$: 
if there is some $\mathbb{Z}$-linear relation such that
\begin{align*}
k_1[\![\gamma_1]\!]+\cdots+k_{b_1(N)}[\![\gamma_{b_1(N)}]\!]\ \in\ 
H^{\delta}_1(M;\mathbb{Z}),
\end{align*} 
then by Lemma~\ref{lem: homology} we have $k_1[\![\bar{\gamma}_1]\!]+\cdots+
k_{b_1(N)}[\![\bar{\gamma}_{b_1(N)}]\!]=0 \in H_1(N;\mathbb{Z})$, and the
$\mathbb{Z}$-independence of the chosen classes in $H_1(N;\mathbb{Z})$ forces
$k_1=\cdots =k_{b_1(N)}=0$. Moreover, each $\gamma_i$ ($i=1,\ldots,b_1(N)$) is
torsion free again by Lemma~\ref{lem: homology}: if $[\![\gamma_i]\!]$ defines a
torsion element in $H_1(M;\mathbb{Z})\slash H_1^{\delta}(M;\mathbb{Z})$, then 
$n[\![\gamma_i]\!]\in H_1^{\delta}(M;\mathbb{Z})$ for some $n\in \mathbb{Z}$,
implying that  $n[\![\bar{\gamma}_i]\!]=0\in H_1(N;\mathbb{Z})$, contradicting
the choice $[\![\gamma_i]\!]$ as a generator of $H_1(N;\mathbb{Z})\slash
Torsion$. This proves $\rank\ H_1(M;\mathbb{Z})\slash
H_1^{\delta}(M;\mathbb{Z})\ge b_1(N)$.

Conversely, by the Hurewicz theorem, $H_1(M;\mathbb{Z})\slash Torsion$ also has
a collection of generators represented by geodesic loops. If $\gamma:[0,1]\to M$
is such a representing loop with $|\gamma|\ge 10^{-1}\bar{\iota}_{hr}$, we
consider a $\delta$-approximating loop $\bar{\gamma}:[0,1]\to N$, and we have  
\begin{align*}
[\![\bar{\gamma}]\!]\ =\ \sum_{i=1}^{b_1(N)}k_i[\![\bar{\gamma}_i]\!]
+\sum_j[\![\bar{\gamma}^{tor}_j]\!]\ \in\ H_1(N;\mathbb{Z}),
\end{align*}
where $[\![\bar{\gamma}_j^{tor}]\!]\in H_1(N;\mathbb{Z})$ are torsion elements,
represented by geodesic loops (by the Hurewicz theorem). Since we could
argue as before to obtain  loops $\gamma_j^{tor}:[0,1]\to M$ so that each 
$\bar{\gamma}_j^{tor}$ is $\delta$-approximating to $\gamma_j^{tor}$, by
Lemma~\ref{lem: homology_0} we know that
\begin{align*}
[\![\gamma]\!]\ \equiv\ \sum_{i=1}^{b_1(N)}k_i[\![\gamma_i]\!]+
\sum_j[\![\gamma^{tor}_j]\!] \mod H_1^{\delta}(M;\mathbb{Z}).
\end{align*}
But since certain finite multiple of $[\![\bar{\gamma}_j^{tor}]\!]$ vanishes in
$H_1(N;\mathbb{Z})$, by Lemma~\ref{lem: homology_0} we know that the coset
$\sum_j[\![\gamma^{tor}_j]\!] +H_1^{\delta}(M;\mathbb{Z})$ defines a torsion
element in $H_1(M;\mathbb{Z}) \slash H_1^{\delta}(M;\mathbb{Z})$. 
Consequently, the coset $[\![\gamma]\!] + H_1^{\delta}(M;\mathbb{Z})$ is
generated by those of $[\![\gamma_1]\!],\ldots, [\![\gamma_{b_1(N)}]\!]$.

The above discussion implies that $\rank\ H_1(M;\mathbb{Z})\slash
H_1^{\delta}(M;\mathbb{Z})\le b_1(N)$. Moreover, since $H_1(M;\mathbb{Z})$ and
$H_1^{\delta}(M;\mathbb{Z})$ are finitely generated $\mathbb{Z}$-modules, we
have
\begin{align*}
\rank\ H_1^{\delta}(M;\mathbb{Z})\ =\ \rank\ H_1(M;\mathbb{Z})-\rank\
H_1(M;\mathbb{Z})\slash H_1^{\delta}(M;\mathbb{Z})\ =\ b_1(M)-b_1(N),
\end{align*}
which is the desired equality for this section.
\end{proof}

\section{Effective distance control of initially nearby geodesics}
In this section we prove Theorem~\ref{thm: main3}, which is the major 
technical ingredient in proving the first claim Theorem~\ref{thm: main1}. Our
proof is inspired by the work of Colding and Naber \cite{ColdingNaber} --- in
fact, once we have set up the most basic estimates, i.e. the Laplacian
comparison for the distance function to a closed embedded submanifold, and the
local control of the spreading of minimal geodesics, the rest of Colding and
Naber's original argument works directly. While this may seem to be obvious to
experts, we will fill in the necessary details that bridge our considerations
to Colding and Naber's original results in \cite[\S 2 and \S 3]{ColdingNaber}.

Let $\Sigma$ be a smoothly embedded submanifold of a complete Riemannian
manifold $(M^m,g)$ such that $\Sigma=\overline{\Sigma}$, i.e. $\Sigma$ is
closed but not necessarily bounded. Let $r_{\Sigma}: M\to \mathbb{R}$ denote
the distance to $\Sigma$, i.e. $r_{\Sigma}(q):=\inf_{y\in \Sigma} d_g(q,y)$. By
the completeness of $(M,g)$ and the closedness of $\Sigma$, we know that for
any $q\in M\backslash \Sigma$, $r_{\Sigma}(q)>0$ is always realized by some
unit speed smooth geodesic $\sigma$ with $\sigma(0)=p\in \Sigma$,
$\sigma(r_{\Sigma}(q))=q$ and $|\sigma|=r_{\Sigma}(q)$.
This tells, by the triangle inequality, that $r_{\Sigma}$ is a $1$-Lipschitz
function on $M$. We will check that $r_{\Sigma}$ is in fact smooth almost
everywhere on $M$ in Lemma~\ref{lem: ae_smooth}. Consequently, the gradient
vector field $\nabla r_{\Sigma}$ is smoothly defined \emph{almost everywhere} on
$M$, and so is its gradient flow $\psi^{\Sigma}_s$ for each $s\ge 0$.

Assuming $\Rc_g\ge -(m-1)g$ and $\diam (M,g)\le D$, we will check that $\Delta
r_{\Sigma}\le C(m,D)r_{\Sigma}^{-1}$ in the distributional sense in
the first subsection, and then locally control the spreading of the flow lines
of $\nabla r_{\Sigma}$ in the second subsection. Once these are done, we could
directly appeal to the estimates in \cite[\S 2]{ColdingNaber} to obtain uniform
$C^1_{loc}$ and $H^2_{loc}$ control of the parabolic approximation of
$r_{\Sigma}$ in the third subsection, and finally, we follow the argument in
\cite[\S 3]{ColdingNaber} to effectively control the spreading of the flow
lines of $\nabla r_{\Sigma}$.

\subsection{Laplacian comparison for distance to submanifolds}
In this subsection, we will obtain an upper bound of $\Delta r_{\Sigma}$ in
(\ref{eqn: Laplace_comparison}), in the barrier sense \emph{a la} Calabi
\cite{Calabi58}. Though being a simple estimate, we surprisingly notice its
absence in the literature, and the purpose of this subsection is to fill this   
gap; also compare \cite[Lemma 2.1 and Lemma 2.2]{CF18} for the case of
non-negative Ricci curvature. 
The first order of business is to understand the regularity of $r_{\Sigma}$.
\begin{lemma}\label{lem: ae_smooth}
The function $r_{\Sigma}$ is almost everywhere smooth on $M$. 
\end{lemma}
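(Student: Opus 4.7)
The plan is to identify an open subset of $M$ on which $r_{\Sigma}$ is smooth and then to argue that its complement is null with respect to the Riemannian measure. Throughout, I may assume $\Sigma$ has positive codimension, since the case $\dim \Sigma = m$ is immediate: $\Sigma$ is then an open-and-closed subset, $r_{\Sigma}$ vanishes on $\Sigma$ and is smooth on its (open) complement.

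First, I define the \emph{regular set} $R_{\Sigma} \subset M\setminus \Sigma$ to consist of those $q$ admitting a \emph{unique} unit-speed minimizing geodesic $\sigma:[0,r_{\Sigma}(q)] \to M$ from $\Sigma$ to $q$ at which $q$ is not a focal point of $\Sigma$, meaning the normal exponential map $\exp^{\perp} : T^{\perp}\Sigma \to M$ is a local diffeomorphism at $(\sigma(0), r_{\Sigma}(q)\dot{\sigma}(0))$. The first variation forces $\dot{\sigma}(0) \perp T_{\sigma(0)}\Sigma$, so this initial data lies in $T^{\perp}\Sigma$. By the inverse function theorem, on a neighborhood of $q$ contained in $R_{\Sigma}$ one can smoothly invert $\exp^{\perp}$ as $(\exp^{\perp})^{-1}(q') = (\pi(q'), V(q'))$ with $V(q')\neq 0$, and then $r_{\Sigma}(q') = |V(q')|_g$ is smooth in $q'$. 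A routine limiting argument using compactness of the unit normal sphere bundle (over bounded subsets of $\Sigma$) and continuity of the differential of $\exp^{\perp}$ shows that $R_{\Sigma}$ is open.

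Next, I decompose $M\setminus(R_{\Sigma}\cup \Sigma)$ into the focal locus $F_{\Sigma}$, the image under $\exp^{\perp}$ of its own critical set, and the non-focal cut locus $C_{\Sigma}$, consisting of points with at least two minimizing geodesics to $\Sigma$, none of which has passed a focal point. By Sard's theorem, $F_{\Sigma}$ is null. For $C_{\Sigma}$, I introduce the cut-distance function $c:S^{\perp}\Sigma \to (0,\infty]$ on the unit normal bundle, defined as the supremum of $t>0$ such that $s\mapsto \exp^{\perp}_p(sv)$ minimizes the distance to $\Sigma$ on $[0,t]$. Standard arguments (parallel to those for the intrinsic cut locus) give lower semicontinuity of $c$, so its graph $G := \{(p, c(p,v)v) : (p,v)\in S^{\perp}\Sigma,\ c(p,v)<\infty\} \subset T^{\perp}\Sigma$ is a Borel set of Hausdorff dimension at most $m-1$. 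Since $\exp^{\perp}$ is locally Lipschitz and $C_{\Sigma} \subset \exp^{\perp}(G)$, the set $C_{\Sigma}$ has $m$-dimensional Hausdorff measure zero, hence Lebesgue measure zero in $M$. Combined with $\Sigma$ itself being null (positive codimension) and $F_{\Sigma}$ being null, this gives that $r_{\Sigma}$ is smooth on the full-measure open set $R_{\Sigma}$.

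The main obstacle is the treatment of the non-focal cut set $C_{\Sigma}$, where $\exp^{\perp}$ has full rank but fails to be injective; neither the implicit function theorem nor Sard's theorem applies directly. The key point is that the failure of uniqueness occurs precisely along the ``arrival boundary'' of $\exp^{\perp}$, parametrized by the graph of the cut-distance function $c$, which is $(m-1)$-dimensional once semicontinuity is in hand. Once this is established, the smoothness on $R_{\Sigma}$ and the Sard step for focal points are standard, and the almost-everywhere smoothness of $r_{\Sigma}$ follows.
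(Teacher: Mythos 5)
Your first step coincides with the paper's: at a non-focal point with a unique minimizer you invert $\exp^{\perp}$ locally and use a compactness/limiting argument to guarantee that all nearby points are served by the same branch of the inverse, so $r_{\Sigma}$ is smooth there; and the focal locus is discarded by Sard, exactly as in the paper. Where you genuinely diverge is the remaining set $C_{\Sigma}$ of non-focal points admitting two or more minimizers. The paper handles this in one line: two distinct minimizers arrive at $q$ with distinct terminal velocities, so $r_{\Sigma}$ is not differentiable at $q$; since $r_{\Sigma}$ is $1$-Lipschitz, Rademacher's theorem makes the set of such points null. You instead parametrize $C_{\Sigma}$ by the graph of a cut-distance function $c$ on $S^{\perp}\Sigma$ and push it forward by the locally Lipschitz map $\exp^{\perp}$. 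This route can be made to work, but it is substantially heavier (it requires setting up and proving measurability of a cut-time function for a closed, possibly non-compact $\Sigma$ --- recall the application is to a line in the universal cover), whereas the Rademacher argument needs nothing beyond the Lipschitz bound already established.

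Moreover, as written your key intermediate claim is wrong: semicontinuity of $c$ does \emph{not} imply that its graph has Hausdorff dimension at most $m-1$. Graphs of merely continuous functions over an $(m-1)$-dimensional base can have Hausdorff dimension strictly larger than $m-1$ (Weierstrass-type examples); dimension $m-1$ would require Lipschitz regularity of the cut-time function, which is a nontrivial theorem in its own right and not something to be waved through as standard. Also, the semicontinuity you can get cheaply (from ``limits of minimizers are minimizers'', using that $\Sigma$ is closed) is \emph{upper}, not lower, semicontinuity. Fortunately neither issue is fatal to your route: upper semicontinuity makes $c$ Borel, so its graph $G$ is a Borel subset of the $m$-dimensional manifold $T^{\perp}\Sigma$ whose fiber over each point of $S^{\perp}\Sigma$ is a single point, hence $G$ is Lebesgue-null by Fubini; since $\exp^{\perp}$ is smooth (in particular locally Lipschitz) between manifolds of the same dimension, it maps null sets to null sets, and $C_{\Sigma}\subset \exp^{\perp}(G)$ is null. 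With that repair your proof is complete, but you should be aware that the paper's Lipschitz-plus-Rademacher argument reaches the same conclusion with none of this cut-locus machinery.
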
 
\begin{proof} 
We beginning with considering a point $q\in M\backslash \Sigma$ which is not a
focal point of $\Sigma$ (see \cite[\S 10.4]{doCarmo}), and which is connected
to $\Sigma$ by a \emph{unique} minimal geodesic $\sigma$ of unit speed, such
that $|\sigma|=r_{\Sigma}(q)=:l$, $\sigma(0)=p\in \Sigma$ and $\sigma(l)=q$. We
will show that $r_{\Sigma}$ is smooth in a neighborhood around $q$.

Since $q\in M$ is not a focal point of $\Sigma$, the initial data
$(\sigma(0),\dot{\sigma}(0))$ is a regular point of the normal exponential map
$\exp^{\perp}:T^{\perp}\Sigma\to M$, where $T^{\perp}\Sigma$ is the normal
bundle of $\Sigma$ within $TM$, and $\exp^{\perp}$ is nothing but the
restriction of the usual exponential map restricted to $T^{\perp}\Sigma$. Since
$\dim T^{\perp}\Sigma=\dim M$ and $q$ is not a singular point of
$\exp^{\perp}$, there is an open neighborhood $U_0\subset T^{\perp}\Sigma$ where
$\exp^{\perp}$ restricts to be a diffeomorphism onto its image
$W_0:=\exp^{\perp}(U_0)$. We now make the following

\textbf{Claim:} There is a smaller neighborhood $W\subset W_0$ of $q$ such that
for any $q'\in W$, $r_{\Sigma}(q')$ is uniquely realized by the geodesic
$t\mapsto \exp_{p'}t\vec{v}$, for some $(p',\vec{v})\in
U=(\exp^{\perp})^{-1}(W)\subset U_0$. 

\begin{proof}[Proof of the Claim]
Suppose otherwise, that there is a sequence $q_i\to q\in W_0$ with distinct
initial data $(\tau_i(0),\dot{\tau}_i(0)) \in T^{\perp}\Sigma$ and
$(\sigma_i(0),\dot{\sigma}_i(0))\in U_0$, such that
$|\tau_i|=d_g(q_i,\Sigma)\le |\sigma_i|$, $\tau_i(|\tau_i|)=q_i$, and
$\exp_{\sigma_i(0)} t_i\dot{\sigma}_i(0)=q_i$ with $t_i\ge d_g(q_i,\Sigma)$.
(Here we only work with unit tangent vectors.) Since $\exp^{\perp}|_{U_0}$ is
bijective, we have for all $i$ large enough $(\tau_i(0),\dot{\tau}_i(0))\not\in
U_0$. On the other hand, for all $i$ sufficiently large, since
\begin{align*}
d_g(q,\tau_i(0))\ \le\  d_g(q,q_i)+d_g(q_i,\tau_i(0))\ <\ 3d_g(q,\Sigma),
\end{align*}
by taking subsequence if necessary,  we know that 
$(\tau_i(0),\dot{\tau}_i(0))\to (p',\vec{v})\in
T^{\perp}\Sigma\backslash U_0$ for some $p'\in \Sigma$ and $\vec{v}\in T_{p'}M$
with unit length. However, denoting the limit geodesic $t\mapsto
\exp_{p'}t\vec{v}$ by $\tau$, we notice that it has length
\begin{align*}
|\tau|\ =\ \lim_{i\to \infty}|\tau_i|\ =\ \lim_{i\to \infty}d_g(q_i,\Sigma)\ =\
r_{\Sigma}(q),
\end{align*} 
and this contradicts our uniqueness assumption on $\sigma$, which has length 
$|\sigma|=r_{\Sigma}(q)$ and initial data $(\sigma(0),\dot{\sigma}(0))\in U_0$. 
\end{proof}

Now on $U$ denoting the unique geodesics $t\mapsto \exp_{p'}t(\vec{v})$ by
$\tau_{(p',\vec{v})}$ for all $(p',\vec{v})\in U$, we always have
$r_{\Sigma}(\exp_{p'}\vec{v})=|\tau_{(p',\vec{v})}|$, so that $r_{\Sigma}$
smoothly depends on $(p',\vec{v})\in U$. Moreover, denoting the inverse function
of $\exp^{\perp}|_{U}$ by $\log^{\perp}_U: W\to U\subset T^{\perp}\Sigma$, we
see that $r_{\Sigma} =|\tau_{\log^{\perp}_U}|$, which clearly depends on the
input smoothly. So $r_{\Sigma}$ is smooth on $W\subset M$.

Therefore, we understand that a possibly non-smooth point $q\in M$ of
$r_{\Sigma}$ must fall into one of the following two categories:
\begin{enumerate}
  \item $q$ is a focal point of $\Sigma$; or 
  \item there are multiple minimal geodesics that realizes $r_{\Sigma}(q)$.
\end{enumerate}
Notice that the focal points of $\Sigma$ are characterized by the critical
values of the normal exponential map (see \cite[\S 10.4, Proposition
4.4]{doCarmo}), which, by Sard's theorem, is a null set in $M$. On the other
hand, if (2) is the case, say, there are geodesics $\sigma_1$ and
$\sigma_2$ realizing $r_{\Sigma}(q)$, with $\sigma_1(0),\sigma_2(0)\in \Sigma$
and $\sigma_1(1)=\sigma_2(1)=q$, but 
$(\sigma_1(0),\dot{\sigma}_1(0)) \not= (\sigma_2(0),\dot{\sigma}_2(0))$, then we
must have $\dot{\sigma}_1(1) \not= \dot{\sigma}_2(1)$, and $r_{\Sigma}$
fails to be differentiable at $q$. However, since $r_{\Sigma}$ is a Lipschitz
function, its non-differentiable points form a measure $0$ subset of $M$.
Therefore, all non-smooth points of $r_{\Sigma}$ fall into the union of two
null subsets of $M$, which has to be of measure $0$.
\end{proof}

Before checking the desired Laplacian bound for $r_{\Sigma}$ at its smooth
points, we define the notation $F_m(r)$ as a function for $r>0$ with 
\begin{align*}
F_m(r)\ :=\
(m-1) r\coth r\ =\ (m-1)\frac{\sum_{n=0}^{\infty}\frac{r^{2n}}{(2n)!}}
{\sum_{n=0}^{\infty}\frac{r^{2n}}{(2n+1)!}}.
\end{align*}
Clearly $F_m(r)>m-1$ and $\lim_{r\searrow 0}F_m(r)=m-1$. We also put
$C_{F_m}(l):=\max_{r\in [0,l]}F_m(r)$. 
\begin{lemma}[Laplacian comparison at smooth points]
\label{lem: Laplace_comparison_smooth}
 Let $\Sigma\subset M$ be a closed and smoothly embedded submanifold. Suppose
 that the Riemannian manifold $(M,g)$ satisfies $\Rc_g\ge -(m-1)g$, and let
 $r_{\Sigma}$ denote the distance function to $\Sigma$. Then at the smooth
 points of $r_{\Sigma}$, we have 
\begin{align}\label{eqn: Laplace_comparison}
\Delta r_{\Sigma}\ \le\ \frac{F_m(r_{\Sigma})}{r_{\Sigma}}.
\end{align}
\end{lemma}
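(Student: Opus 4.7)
The plan is to estimate $\Delta r_\Sigma$ pointwise at smooth points via the second variation of arc length along the unique minimizing geodesic from $\Sigma$ to $q$. By Lemma~\ref{lem: ae_smooth}, at a smooth point $q$ of $r_\Sigma$ there is a unique unit-speed minimizing geodesic $\sigma:[0,l]\to M$ with $\sigma(0)=p\in\Sigma$, $\sigma(l)=q$, and $l=r_\Sigma(q)$; the first-variation computation forces $\dot\sigma(0)\perp T_p\Sigma$. I would then pick an orthonormal basis $\{e_i\}_{i=1}^{m-1}$ of $(\dot\sigma(l))^\perp\subset T_qM$ and parallel transport each $e_i$ along $\sigma$ to obtain parallel fields $E_i(t)$.

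For each $i$, I would consider a test variation $\alpha_i:(-\varepsilon,\varepsilon)\times[0,l]\to M$ built so that $\alpha_i(s,0)\equiv p$ (constant in $s$), $\alpha_i(s,l)=\exp_q(se_i)$, and whose variation field along $\sigma$ is $V_i(t)=\phi(t)E_i(t)$ for a scalar $\phi\in C^1([0,l])$ with $\phi(0)=0$ and $\phi(l)=1$. Since $\sigma$ minimizes the distance from $\Sigma$ to $q$, the length $L_i(s)$ of $\alpha_i(s,\cdot)$ bounds $r_\Sigma(\exp_q(se_i))$ from above with equality at $s=0$; and since $\alpha_i(\cdot,0)$ is constant while $\alpha_i(\cdot,l)$ is a geodesic, the boundary terms $\langle\nabla_{V_i}V_i,\dot\sigma\rangle|_0^l$ in the second variation formula vanish. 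Hence
\[
\mathrm{Hess}\,r_\Sigma(e_i,e_i)\ =\ \frac{d^2}{ds^2}\bigg|_{s=0} r_\Sigma(\exp_q(se_i))\ \leq\ L_i''(0)\ =\ \int_0^l\left[(\phi')^2-\phi^2\langle R(E_i,\dot\sigma)\dot\sigma,E_i\rangle\right]dt.
\]

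Summing over $i=1,\ldots,m-1$ and invoking $\Rc_g\geq-(m-1)g$ in the form $\sum_i\langle R(E_i,\dot\sigma)\dot\sigma,E_i\rangle =\Rc_g(\dot\sigma,\dot\sigma)\geq -(m-1)$, I obtain
\[
\Delta r_\Sigma(q)\ \leq\ (m-1)\int_0^l\left[(\phi')^2+\phi^2\right]dt.
\]
The right-hand side is a one-dimensional quadratic functional in $\phi$ with fixed boundary values; its infimum is achieved at $\phi(t)=\sinh t/\sinh l$, and a direct calculation shows this infimum equals $(m-1)\coth l=F_m(l)/l$, which is precisely the bound in (\ref{eqn: Laplace_comparison}).

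The main technical care goes into the boundary behaviour at $t=0$. The standard submanifold-to-point second-variation argument, in which $\alpha_i(\cdot,0)$ is allowed to move in $\Sigma$ with nontrivial initial velocity, produces an additional second-fundamental-form contribution from $\Sigma$ that is delicate to control without extra hypotheses. By instead pinning the initial endpoint of $\alpha_i$ to the single foot point $p$, I sidestep this term entirely at the cost of using a non-optimal competitor curve; since only an upper bound on $\Delta r_\Sigma$ is needed, this suffices. The remainder of the argument is classical comparison geometry and is insensitive to the fact that $\Sigma$ is more than a point.
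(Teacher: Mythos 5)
Your argument is correct, and it takes a genuinely different route from the paper. The paper compares $r_{\Sigma}$ with the distance function $d_p$ to the foot point $p=\sigma(0)$: since $r_{\Sigma}\le d_p$ with equality at $q$ and both have vanishing first derivatives in the directions $E_i\perp\dot\sigma$, one gets $Hess_{r_{\Sigma}}(E_i,E_i)\le Hess_{d_p}(E_i,E_i)$, and then the classical Laplacian comparison $\Delta d_p\le (m-1)\coth d_p$ finishes the proof. You instead run a direct second-variation/index-form argument: the competitor variation $\alpha_i(s,t)=\exp_{\sigma(t)}\bigl(s\phi(t)E_i(t)\bigr)$ with $\phi(0)=0$, $\phi(l)=1$ gives $r_{\Sigma}(\exp_q(se_i))\le L_i(s)$ with matching value and first derivative at $s=0$, hence $Hess_{r_{\Sigma}}(e_i,e_i)\le L_i''(0)$, and summing, using $\Rc\ge-(m-1)g$ and the optimal test function $\phi(t)=\sinh t\slash\sinh l$, yields exactly $(m-1)\coth l=F_m(l)\slash l$. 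Both proofs exploit the same structural point — that one can avoid any second-fundamental-form contribution from $\Sigma$ by anchoring the comparison at the single foot point $p$ (the paper via the barrier $d_p\ge r_{\Sigma}$, you via pinning $\alpha_i(\cdot,0)\equiv p$) — but yours is self-contained (it reproves the model bound rather than quoting the point comparison) and sidesteps the paper's mild technical care about smoothness of $d_p$ near $q$. Two small points you leave implicit and should state: the identification $\Delta r_{\Sigma}(q)=\sum_{i=1}^{m-1}Hess_{r_{\Sigma}}(e_i,e_i)$ uses $Hess_{r_{\Sigma}}(\nabla r_{\Sigma},\nabla r_{\Sigma})=0$, which follows from $|\nabla r_{\Sigma}|\equiv 1$ near the smooth point; and $f'(0)=0$ uses $\nabla r_{\Sigma}(q)=\dot\sigma(l)$, which follows from differentiating $r_{\Sigma}(\sigma(t))=t$ together with the $1$-Lipschitz bound. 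Neither is a gap; both are standard.
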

 The basic observation here is that the level set of $r_{\Sigma}$ is always
 less convex than the geodesic sphere touching it, as illustrated in
 Figure~\ref{fig:differentspheres}.
 
 \begin{figure}
 \begin{center}
 \psfrag{B1}[c][c]{$\textcolor{green}{\partial B_g(q, r)}$}
 \psfrag{B2}[c][c]{$r_{\Sigma}^{-1}(r)$}
 \psfrag{p}[c][c]{$p$}
 \psfrag{q}[c][c]{$q$}
 \psfrag{q1}[c][c]{$\tilde{\gamma} . \tilde{q}$}
 \psfrag{g}[c][c]{$\textcolor{red}{\Sigma}$}
 \psfrag{s}[c][c]{$\sigma$}
 \psfrag{s1}[c][c]{$\tilde{\gamma} .  \tilde{\sigma}$}
 \includegraphics[width=0.5 \columnwidth]{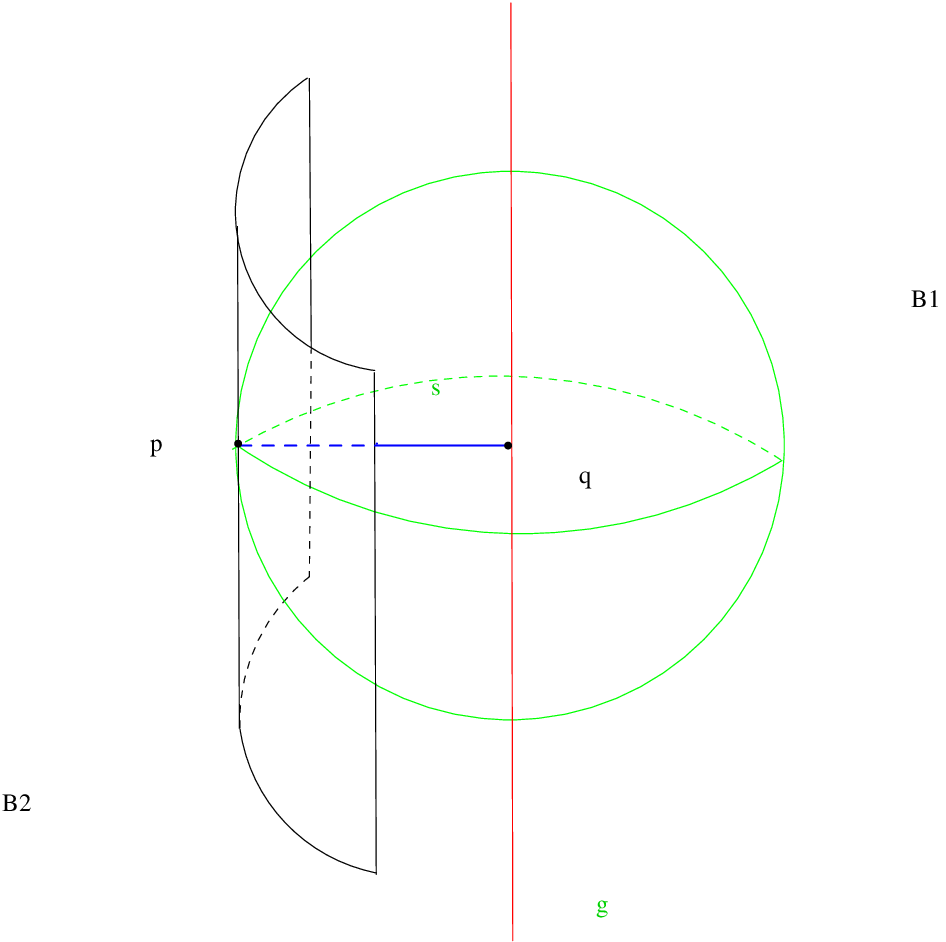}
 \caption{Hessian comparison at regular points of $r_{\Sigma}$}
 \label{fig:differentspheres}
 \end{center}
 \end{figure}
 
\begin{proof}
Suppose $r_{\Sigma}(q)>0$ and $r_{\Sigma}$ is a smooth function around $q\in
M$. We also assume that $r_{\Sigma}(q)$ is realized by a minimal geodesic
$\sigma: [0,r_{\Sigma}(q)]\to M$ of unit speed, with $\sigma(0)=p\in
\Sigma$ and $\sigma(1)=q$. Now let $d_p$ denote the distance function to
$p$, i.e. $d_p(x):=d_g(x,p)$. Then $d_p$ is smooth around $q\in M$ with
$d_p(q)=r_{\Sigma}(q)$ and $d_p(q)$ is also realized by the minimal geodesic
$\sigma$.

Now pick an orthonormal basis $E_1,\ldots,E_m$ of $T_qM$ such that
$E_1=\dot{\sigma}(1)$. Let $\gamma_i:(-\varepsilon,\varepsilon)\to M$ be
minimal geodesics with $\gamma_i(0)=q$ and $\dot{\gamma}_i(0)=E_i$
($i=2,\ldots,m$). Here we choose $\varepsilon>0$ sufficiently small (smallness
depending on $q\in M$) so that both functions $r$ and $d_p$ are smooth when
restricted to each $\gamma_i$. For each $i=2,\ldots,m$, let $f_i,g_i:
(-\varepsilon,\varepsilon)\to \mathbb{R}$ be defined as $f_i(t):=
r_{\Sigma}(\gamma_i(t))$ and $g_i(t):=d_p(\gamma_i(t))$, then we see that
\begin{align*}
f_i(0)\ =\ g_i(0)\ =\ r_{\Sigma}(q),\quad \text{and}\quad 
f'_i(0)\ =\ g_i'(0)\ =\ \langle E_i, \dot{\sigma}(1)\rangle\ =\ 0.
\end{align*}
Moreover, since for each $i=2,\ldots,m$ we have
\begin{align*}
\forall t\in (-\varepsilon, \varepsilon),\quad  
f_i(t)\ =\ \inf_{y\in \Sigma}d_g(\gamma_i(t),y)\ \le\ d_g(\gamma_i(t),p)\ =\
g_i(t),
\end{align*}
comparing the Taylor polynomials of $f_i$ and $g_i$ expanded around $t=0$,
we get the estimates
\begin{align}\label{eqn: second_derivative_comparison}
f_i''(0)\ \le\ g_i''(0)\quad \text{for}\ i=2,\ldots,m.
\end{align}
On the other hand, for each $i=2,\ldots,m$ simple computation gives
\begin{align*}
f_i''(0)\ =\ Hess_{r_{\Sigma}}(E_i,E_i)\quad \text{and}\quad g_i''(0)\ =\
Hess_{d_p}(E_i,E_i),
\end{align*}
 and thus (\ref{eqn: second_derivative_comparison}) leads to
\begin{align}\label{eqn: Laplace_at_q}
\Delta r_{\Sigma}(q)\ \le\ \Delta d_p(q).
\end{align}
Since the usual Laplace comparison for the distance function to a point gives 
\begin{align*}
\Delta d_p\ \le\ (m-1)\coth d_p
\end{align*}
whenever $d_p$ is smooth, and $d_p(q)=r_{\Sigma}(q)$, by (\ref{eqn:
Laplace_at_q}) we especially have at $q\in M$ that
\begin{align*}
\Delta r_{\Sigma}(q)\ \le\ \frac{F_m(r_{\Sigma}(q))}{r_{\Sigma}(q)}.
\end{align*}  
Since $q\in M$ is an arbitrary smooth point the function $r_{\Sigma}$, we have
derived (\ref{eqn: Laplace_comparison}) wherever $r_{\Sigma}$ is smooth.
\end{proof} 

 In a similar spirit, we could in fact show that (\ref{eqn: Laplace_comparison})
 holds everywhere on $M$ in the barrier sense:
\begin{proposition}[Global Laplacian comparison]\label{prop: Laplace_comparison}
As assumed in Lemma~\ref{lem: Laplace_comparison_smooth}, we have (\ref{eqn:
Laplace_comparison}) holding everywhere on $M$ in the barrier sense, i.e. for
any $q\in M$ and every $\varepsilon>0$ small enough, there is an open
neighborhood $U$ of $q$ and a function $h_{q,\varepsilon}\in C^2(U)$, such that 
\begin{enumerate}
  \item $r_{\Sigma}(q)= h_{q,\varepsilon}(q)$,
  \item $h_{q,\varepsilon}\ge r_{\Sigma}$ in $U$, and
  \item $\Delta h_{q,\varepsilon}\le
  \frac{F_m(r_{\Sigma}(q))}{r_{\Sigma}(q)}+\varepsilon$.
\end{enumerate}
Consequently, (\ref{eqn: Laplace_comparison}) holds everywhere on $M$ in the
distributional sense.
\end{proposition}
\begin{proof}
As in the proof of Lemma~\ref{lem: Laplace_comparison_smooth}, we let $\sigma$
denote a unit speed minimal geodesic such that $\sigma(0)=p\in \Sigma$,
$\sigma(r_{\Sigma}(q))=q$ and $|\sigma|=r_{\Sigma}(q)$, without assuming $q\in
M$ being a smooth point of $r_{\Sigma}$. Now for any positive 
\begin{align*}
\varepsilon\ \le\  \min\left\{ 10^{-1}\sqrt{r_{\Sigma}(q)},\ (m-1)^{-1}
\left(2(m-1)^{-1}C_{F_m}(r_{\Sigma}(q))^2 r_{\Sigma}(q)^{-2}-1\right)^{-1}
\right\}, 
\end{align*} 
we consider the function
\begin{align*}
h_{q,\varepsilon}(x)\ :=\ d_g\left(x,\sigma(\varepsilon^2)\right)+\varepsilon^2.
\end{align*}
 Then clearly $r_{\Sigma}(q)=h_{q,\varepsilon}(q)$. 
On the other hand, by the triangle inequality we have
\begin{align*}
\forall x\in M,\quad h_{q,\varepsilon}(x)\ \ge\ d_g(x,p)\ \ge\ r_{\Sigma}(x).
\end{align*}
Moreover, $h_{q,\varepsilon}$ is smooth in some open neighborhood around $q$,
as $q$ is not in the cut locus of $p\in M$, and thus for some $s\in
[r_{\Sigma}(q)-\varepsilon^2,r_{\Sigma}(q)]$ we have
\begin{align*}
\Delta h_{q,\varepsilon}(q)\ \le\ &(m-1)\coth
d_g\left(q,\sigma(\varepsilon^2)\right)\\
=\ &(m-1)\coth
 r_{\Sigma}(q)+(m-1)\varepsilon^2
 \left((m-1)^{-1}s^{-2}F_m(s)^2-1\right)\\
 \le\ &\frac{F_m(r_{\Sigma}(q))}{r_{\Sigma}(q)}+\varepsilon,
\end{align*}
by the constraint imposed on $\varepsilon>0$. Therefore, we have constructed an
upper barrier function $h_{q,\varepsilon}$ for $r_{\Sigma}$ satisfying all
requirements in Calabi's Laplacian comparison in the barrier sense (see
\cite{Calabi58}). It is well-known that this guarantees (\ref{eqn:
Laplace_comparison}) to hold in the distributional sense.
\end{proof}

Slightly modifying the proof of \cite[Lemma 3.2]{ColdingNaber}, which is
originated from \cite{Calabi67}, we obtain the following uniform Hessian $L^2$
estimate along the interior of a minimal geodesic, as a straightforward
consequence of the above Laplacian comparison for $r_{\Sigma}$:
\begin{lemma}\label{lem: Hess_L2_line}
Suppose $\Sigma$ is a closed embedded submanifold of a Riemannian
manifold $(M,g)$ with $\Rc_g\ge -(m-1)g$, and let $r_{\Sigma}$ denote the
distance function to $\Sigma$, as discussed above. If $q\in M\backslash \Sigma$
and $\sigma$ is a minimal geodesic of unit speed realizing the value
$l:=r_{\Sigma}(q)>0$, then 
\begin{align}\label{eqn: Hess_L2_segment}
 \int_{\beta l}^{(1-\beta)l}\left|Hess_{r_{\Sigma}}\right|^2(\sigma(t))\
 \text{d}t\ \le\ \frac{C^2_H(m,l)}{\beta l},
\end{align}
for any $\beta \in (0,10^{-2})$, with $C^2_H(m,l):=(m-1)l^2+C_{F_m}(l)$.
\end{lemma}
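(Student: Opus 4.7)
The proof of this Hessian $L^2$ estimate will follow Calabi's classical integration of the Bochner formula \cite{Calabi67}, as adapted in \cite{ColdingNaber}, with the only novelty being that we replace the distance function to a point by the distance function $r_{\Sigma}$ to a submanifold. My plan is to apply Bochner along $\sigma$, integrate, and then control the two resulting boundary terms using Lemma~\ref{lem: Laplace_comparison} together with an auxiliary barrier coming from the distance to the endpoint $q$.

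First, since $|\nabla r_{\Sigma}|^2=1$ almost everywhere, the Bochner identity at any smooth point of $r_{\Sigma}$ reduces to
\begin{align*}
\left|Hess_{r_{\Sigma}}\right|^2\ =\ -\langle \nabla \Delta r_{\Sigma},\nabla r_{\Sigma}\rangle\ -\ \Rc(\nabla r_{\Sigma},\nabla r_{\Sigma}).
\end{align*}
Along $\sigma$, which is a unit-speed integral curve of $\nabla r_{\Sigma}$ with $r_{\Sigma}(\sigma(t))=t$, the first right-hand term becomes $-\tfrac{\text{d}}{\text{d}t}\Delta r_{\Sigma}(\sigma(t))$. Integrating by parts from $\beta l$ to $(1-\beta) l$ (in the Calabi barrier/distributional formalism, which is legitimate thanks to Lemma~\ref{lem: ae_smooth} and Lemma~\ref{lem: Laplace_comparison}) and using $\Rc_g\ge -(m-1)g$, I arrive at
\begin{align*}
\int_{\beta l}^{(1-\beta) l}\left|Hess_{r_{\Sigma}}\right|^2(\sigma(t))\,\text{d}t\ \le\ \Delta r_{\Sigma}(\sigma(\beta l))\ -\ \Delta r_{\Sigma}(\sigma((1-\beta)l))\ +\ (m-1)\,l.
\end{align*}

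The left boundary term $\Delta r_{\Sigma}(\sigma(\beta l))$ is immediately bounded above by $F_m(\beta l)/(\beta l)\le C_{F_m}(l)/(\beta l)$ via Lemma~\ref{lem: Laplace_comparison}. The delicate step, which I expect to be the main obstacle, is producing a matching upper bound on $-\Delta r_{\Sigma}(\sigma((1-\beta)l))$, i.e., a \emph{lower} bound on $\Delta r_{\Sigma}$ at this interior point, since Lemma~\ref{lem: Laplace_comparison} supplies only an upper bound. My plan to circumvent this is an auxiliary barrier: the triangle inequality yields $r_{\Sigma}(x)+d_g(x,q)\ge l$ for every $x\in M$, with equality exactly along $\sigma$. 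Hence $\sigma((1-\beta)l)$ is a minimum of $r_{\Sigma}+d_g(\cdot,q)-l$, so $h(x):=l-d_g(x,q)$ is a smooth lower barrier for $r_{\Sigma}$ there. Since the sub-segment $\sigma|_{[(1-\beta)l,l]}$ is still a minimal geodesic of length $\beta l$ from $\sigma((1-\beta)l)$ to $q$, the classical Laplacian comparison gives $\Delta d_g(\cdot,q)(\sigma((1-\beta)l))\le F_m(\beta l)/(\beta l)\le C_{F_m}(l)/(\beta l)$, which, translated through the barrier, yields the distributional bound $-\Delta r_{\Sigma}(\sigma((1-\beta)l))\le C_{F_m}(l)/(\beta l)$.

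Plugging these two boundary estimates into the integrated Bochner inequality and combining with the Ricci term $(m-1)l$, I obtain a bound of the form $C'\bigl(C_{F_m}(l)/(\beta l)+(m-1)l\bigr)$, which, after absorbing numerical factors into the constant and using $\beta<10^{-2}$, is dominated by $C^2_H(m,l)/(\beta l)$ with $C^2_H(m,l)=(m-1)l^2+C_{F_m}(l)$. The remaining technical care concerns making the barrier and integration-by-parts arguments rigorous given that $r_{\Sigma}$ is only a.e.\ smooth, but Lemma~\ref{lem: ae_smooth}, Lemma~\ref{lem: Laplace_comparison}, and Calabi's standard barrier-distribution machinery \cite{Calabi58} handle this in complete analogy with \cite[Lemma 3.2]{ColdingNaber}, the only substantive change being the use of the submanifold distance in place of the distance to a point.
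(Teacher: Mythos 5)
Your proposal is correct and takes essentially the same route as the paper's proof: both integrate the Bochner/Weitzenb\"ock inequality for $r_{\Sigma}$ along $\sigma$, bound the boundary term $\Delta r_{\Sigma}(\sigma(\beta l))$ by the Laplacian comparison of Lemma~\ref{lem: Laplace_comparison}, and control $-\Delta r_{\Sigma}(\sigma((1-\beta)l))$ by $\Delta d_g(\cdot,q)$ at distance $\beta l$ from $q$, using that $r_{\Sigma}+d_g(\cdot,q)$ attains its minimum $l$ along $\sigma$ --- your lower barrier $l-d_g(\cdot,q)$ is exactly the paper's observation that $\Delta\bigl(r_{\Sigma}+d_g(\cdot,q)\bigr)\ge 0$ on the interior of $\sigma$, where both functions are smooth.
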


\begin{proof}
We put $d_q(x):= d_g(q,x)=d_g(\sigma(l),x)$, then $d_q$ is smooth away from
$q=\sigma(l)$, and around $p=\sigma(0)\in \Sigma$. Now the usual Laplace
comparison for the distance function to a point tells that 
\begin{align*}
\Delta d_q\ \le\ \frac{F_m(d_q)}{d_q}
\end{align*}
in a neighborhood around $\sigma((0,(1-\beta) l])$, where $d_q\ge \beta l$. 
Consequently, we have
\begin{align}
\sup_{0<t\le (1-\beta) l}\Delta d_q(\sigma(t))\ \le\
\frac{C_{F_m}(l)}{\beta l}.
\end{align}
By the previous Laplace comparison (\ref{eqn: Laplace_comparison}) for
$r_{\Sigma}$, we also have
\begin{align}
\sup_{\beta l\le t<l}\Delta r_{\Sigma}(\sigma(t))\ \le\
\frac{C_{F_m}(l)}{\beta l},
\end{align}
as $r_{\Sigma}\ge \beta l$ when restricted on $\sigma([\beta l,l))$.

On the other hand, since on $\sigma([\beta l,(1-\beta) l])$ both functions
$d_q$ and $r_{\Sigma}$ are smooth, and their sum $d_q+r_{\Sigma}$ achieves the
\emph{minimum} value $l$ by the triangle inequality, we must have
$\Delta(d_q+r_{\Sigma})\ge 0$ when restricted to $\sigma([\beta
l,(1-\beta)l])$. Therefore, we have for any  $t\in [\beta l,(1-\beta)l]$,
\begin{align}
-\frac{C_{F_m}(l)}{\beta l}\ \le\ -\Delta d_q (\sigma(t))\ \le\
\Delta r_{\Sigma}(\sigma(t))\ \le\ \frac{C_{F_m}(l)}{\beta l}.
\end{align}

We could now apply the Weitzenb\"ock formula to $r_{\Sigma}$ to see 
\begin{align*}
\partial_t\Delta r_{\Sigma}(\sigma(t))+|Hess_{r_{\Sigma}}|^2(\sigma(t))\ \le\
m-1,
\end{align*}
and integrating along $\sigma$ from $\beta l$ to $(1-\beta)l$ we have
\begin{align*}
\int_{\beta l}^{(1-\beta)l}\left|Hess_{r_{\Sigma}}\right|^2(\sigma(t))\
\text{d}t\ \le\ (m-1)(1-2\beta)l+\Delta r_{\Sigma} (\sigma(\beta l))-\Delta
r_{\Sigma}(\sigma((1-\beta)l)),
\end{align*}
which leads to the desired estimate (\ref{eqn: Hess_L2_segment}) if we put
$C^2_H(m,l)=(m-1)l^2+C_{F_m}(l)$.
\end{proof}

\subsection{Local control of the geodesic spreading}
Recall that we would like to control the spreading of the flow lines of $\nabla
r_{\Sigma}$. In this subsection, we do this locally around a smooth flow line of
$\nabla r_{\Sigma}$ that connects a smooth point back to $\Sigma$.

Now let $q\in M\backslash \Sigma$ with the minimal geodesic $\sigma$ realizing
$r_{\Sigma}(q)=:l$ (with $\sigma(0)=p\in \Sigma$), then for any $t\in (0,l)$,
$\sigma(t)$ is a smooth point of $r_{\Sigma}$. We fix some $\beta\in
(0,10^{-2})$, and cover $\sigma([\frac{\beta}{2}l,(1-\frac{\beta}{2})l])$ by
finitely many open sets $W_i\subset M$ as obtained by the Claim in the proof of
\ref{lem: ae_smooth}, and let $U_i\subset T^{\perp}\Sigma$ be the corresponding
open subsets of initial values. By the compactness of
$\sigma([\frac{\beta}{2}l, (1-\frac{\beta}{2})l])$, $\{W_i\}$ can be reduced
to a finite covering and we could let $U:=\cup U_i$ which is an open
neighborhood of $\left\{(p,t\dot{\sigma}(0)):\ t\in
[0,(1-\frac{\beta}{2})l)]\right\}$ in $T^{\perp}\Sigma$. We also let
$W:=\exp^{\perp}U$ which is an open subset of $M$. Notice that the geodesics
$\sigma_{(p',\vec{v})}: t\mapsto \exp_{p'}t\vec{v}$ uniquely realizes
$r_{\Sigma}(\sigma_{(p',\vec{v})}(t))$ for any $t\in [0,1]$, whence being an 
integral curve of $\nabla r_{\Sigma}$ with initial value $(p',\vec{v})\in U$.

With the previous Hessian $L^2$ estimate along a minimal geodesic in
Lemma~\ref{lem: Hess_L2_line}, we now aim to control the spreading of the
integral curves of $\nabla r_{\Sigma}$ more effectively in a small tubular
neighborhood around $\sigma$. To be precise, for any $t\in [\beta l,l-\beta l]$
fixed, and for each $r\in [0, \beta \slash 10]$, we consider the following core
neighborhood of $\sigma(t)$:
\begin{align*}
H_r^t(\sigma)\ :=\ \left\{y\in B_g(\sigma(t),r):\ \forall s\in
[0,(1-\beta) l-t],\
\frac{d_g\left(\psi^{\Sigma}_s(y),\sigma(t+s)\right)}{d_g(y,\sigma(t))} \le
\exp\left (\frac{2C_H(m,l)}{\sqrt{\beta l}}\sqrt{s}\right)\right\}.
\end{align*}
Intuitively speaking, such a neighborhood of $\sigma(t)$ consists of
points in $B_g(\sigma(t),r)$ that are carried by the gradient
flow $\psi^{\Sigma}_s$ up to a controllable distance for all $s\le (1-\beta)
l-t$. When the ambient manifold $M$ has a uniform Ricci curvature lower bound,
we could in fact conclude that \emph{almost every} point of $B_g(\sigma(t),r)$
are in $H^t_r(\sigma)$, provided that $r>0$ is sufficiently small:
\begin{lemma}\label{lem: core_nbhd}
With the same assumptions as in Lemma~\ref{lem: Hess_L2_line}, we fix $\beta\in
(0,10^{-2})$ and $t\in [\beta l,l-\beta l]$. For some $r>0$ sufficiently small,
we have $H_r^t(\sigma)=B_g(\sigma(t),r)$.
\end{lemma}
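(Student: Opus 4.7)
The plan is to propagate the Hessian $L^2$ bound along the distinguished geodesic $\sigma$ to the neighboring integral curves of $\nabla r_\Sigma$, and then apply a Jacobi-type estimate for the differential of the gradient flow to bound the spreading of these curves.

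First, I will choose $r>0$ small enough that $B_g(\sigma(t),r)\subset W$, where $W$ is the open neighborhood containing $\sigma([\tfrac{\beta}{2}l,(1-\tfrac{\beta}{2})l])$ on which $r_\Sigma$ is smooth, as constructed in the Claim in the proof of Lemma~\ref{lem: ae_smooth}. Shrinking $r$ further, I can guarantee that for every $y\in B_g(\sigma(t),r)$ the trajectory $\tau_y(s'):=\psi^\Sigma_{s'}(y)$ stays inside $W$ for all $s'\in[0,(1-\beta)l-t]$, so that $\tau_y$ remains a smooth unit-speed minimal geodesic realizing the value $r_\Sigma(\tau_y(s'))$ at every parameter $s'$.

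Second, given $y\in B_g(\sigma(t),r)$, I will connect $\sigma(t)$ to $y$ by a minimal geodesic $c_0:[0,1]\to M$ with $|\dot c_0|\equiv d_g(\sigma(t),y)$, and flow it forward by the gradient flow to obtain the variation $c_s(u):=\psi^\Sigma_s(c_0(u))$. Since $\sigma$ is itself an integral curve of $\nabla r_\Sigma$, this variation connects $\sigma(t+s)$ to $\psi^\Sigma_s(y)$. Its variational field $V(u,s):=d\psi^\Sigma_s(\dot c_0(u))$ satisfies the ODE $\tfrac{D}{\partial s} V=\nabla_V\nabla r_\Sigma$, hence $\tfrac{\partial}{\partial s}|V|^2=2\,Hess_{r_\Sigma}(V,V)$, which integrates to
\begin{align*}
|V(u,s)|\ \le\ d_g(\sigma(t),y)\,\exp\!\left(\int_0^{s}\left|Hess_{r_\Sigma}\right|\!\left(\tau_{c_0(u)}(s')\right)\,ds'\right).
\end{align*}
Integrating in $u\in[0,1]$ and using $d_g(\psi^\Sigma_s(y),\sigma(t+s))\le L(c_s)$ will then bound the quotient appearing in the definition of $H^t_r(\sigma)$.

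Third, I will apply Cauchy-Schwarz to the exponent, then invoke Lemma~\ref{lem: Hess_L2_line} on each $\tau_{c_0(u)}$. Since each $\tau_{c_0(u)}$ is itself a unit-speed minimal geodesic realizing $r_\Sigma$ with total length $l_u=l+O(r)$ and admissible cutoff $\beta_u=\beta+O(r/l)$, that lemma yields
\begin{align*}
\int_0^{s}\left|Hess_{r_\Sigma}\right|\!\left(\tau_{c_0(u)}(s')\right)\,ds'\ \le\ \sqrt{s}\cdot \frac{C_H(m,l_u)}{\sqrt{\beta_u l_u}}.
\end{align*}
For $r$ chosen sufficiently small relative to $\beta$ and $l$, the right-hand side is majorized by $\tfrac{2C_H(m,l)}{\sqrt{\beta l}}\sqrt{s}$, matching the exponent in the definition of $H^t_r(\sigma)$. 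Combined with step two, this establishes $y\in H^t_r(\sigma)$, completing the argument.

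The hardest part will be transferring the Hessian $L^2$ control from $\sigma$ to the neighboring integral curves $\tau_{c_0(u)}$. This is handled by noting that each such trajectory is itself an admissible minimal geodesic for Lemma~\ref{lem: Hess_L2_line} with slightly perturbed parameters, so the resulting $L^2$ bound depends continuously on the initial point $c_0(u)$ and approaches the one along $\sigma$ as $r\searrow 0$; the factor of $2$ in the exponent leaves ample slack to absorb this perturbation.
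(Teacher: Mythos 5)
Your proposal is correct and follows essentially the same route as the paper's proof: you flow a connecting geodesic by $\psi^{\Sigma}_s$, bound the variational (Jacobi-type) field via $\partial_s|V|^2=2\,Hess_{r_\Sigma}(V,V)$, and control the exponent by Cauchy--Schwarz together with the Hessian $L^2$ estimate of Lemma~\ref{lem: Hess_L2_line} applied along the neighboring flow lines, with the factor $2$ absorbing the perturbation of the parameters. This matches the paper's argument, which uses the variation $\tau(s,u)=\psi^{\Sigma}_s(\exp_{\sigma(t)}u\vec v)$ and the same smallness conditions on $r$ ensuring the ball lies in the smooth region $W$.
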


\begin{proof}
Let $W\subset M$ denote the open neighborhood of $\sigma([\beta
l,(1-\beta)l])$ where $r_{\Sigma}$ is smooth and let $r$ satisfy 
\begin{align*}
r\ \le\ \frac{1}{10}
\min\left\{\beta l, \min_{s\in [\beta l,(1-\beta)l]} d_g(\sigma(s),M\backslash
W), inj_g(\sigma(t)) \right\},
\end{align*}
where $inj_g(\sigma(t))$ denotes the injectivity radius at $\sigma(t)$. Further
shrinking $r$ if necessary, we also required that
$\inf_{B_g(\sigma(t),2r)}r_{\Sigma}\ge \frac{\beta l}{2}$.
By the compactness of $\sigma([\beta l,(1-\beta)l])$ and the Lipschitz
continuity of $r_{\Sigma}$, it is clear that $r>0$.

For any smooth point $y\in B_g(\sigma(t),r)$, there
is a unique $\vec{v}\in T_{\sigma(t)}M$ such that
$\exp_{\sigma(t)}\vec{v}_0=y$. We let
$\tau(s,u):=\psi_s^{\Sigma}(\exp_{\sigma(t)}u\vec{v}):[0,(1-\beta)l-t]\times
[-2,2]\to M$ be a parametrized $2$-dimensional submanifold in $M$. Notice that
for any $u\in [-2,2]$, $s\mapsto \tau(s,u)$ is an integral curve of $\nabla
r_{\Sigma}$ and thus a smooth geodesic. This implies that the variation $\tau$
is by geodesics and thus $J(s,u):=\partial_u\tau(s,u)$ is a Jacobi field along
the geodesic $s\mapsto \tau(s,u)$. Since for each $s$ fixed,
$u\mapsto \tau(s,u)$ furnishes a curve connecting $\sigma(t+s)=\tau(s,0)$ and
$\psi_s^{\Sigma}(y)=\tau(s,1)$, we have 
\begin{align*}
\forall s\in [0,(1-\beta)l-t],\quad 
d_g\left(\psi_s^{\Sigma}(y),\sigma(t+s)\right)\ \le\
\left|\tau(s,-)|_{[0,1]}\right|.
\end{align*}
Since $\left|\tau(s,-)|_{[0,1]}\right|=\int_0^1|J(s,u)|\ \text{d}u$, we would
like to compare $|J(0,u)|$ and $|J(s,u)|$. As $\mathcal{L}_{\nabla
r_{\Sigma}}J=0$, we have
\begin{align*}
\forall s\in [0,(1-\beta)l-t],\ \forall u\in [-2,2]\quad 
\partial_s|J(s,u)|^2\ =\ 2Hess_{r_{\Sigma}}(J(s,u),J(s,u)),
\end{align*} 
and thus 
\begin{align*}
\left|\partial_s\log |J(s,u)|^2\right|\ \le\
2|Hess_{r_{\Sigma}}|(\tau(s,u)).
\end{align*}
Integrating with respect to $s$ we see for any $s_1\in [0,(1-\beta)l-t]$ that 
\begin{align*}
\left|\log\frac{|J(t+s_1,u)|^2}{|J(t,u)|^2} \right|\ \le\
2\int_{0}^{s_1}|Hess_{r_{\Sigma}}|(\tau(t+s,u))\ \text{d}s\ \le\
2\left(\int_{\frac{\beta l}{4}}^{(1-\beta)l}
\left|Hess_{r_{\Sigma}}\right|^2(\tau)\right)^{\frac{1}{2}}\sqrt{s_1},
\end{align*}
since the geodesic $u\mapsto \tau(0,u)=\exp_{\sigma(t)}(u\vec{v})$ is at
least $\frac{\beta l}{4}$ away from $\Sigma$. Consequently, we have
\begin{align*}
\exp\left(-2\left(\int_{\frac{\beta l}{4}}^{(1-\beta)l}
\left|Hess_{r_{\Sigma}}\right|^2(\sigma)\right)^{\frac{1}{2}} \sqrt{s_1}\right)\
\le\ \frac{|J(t+s_1,u)|^2}{|J(t,u)|^2}\ \le\
\exp\left(2\left(\int_{\frac{\beta l}{4}}^{(1-\beta)l}
\left|Hess_{r_{\Sigma}}\right|^2(\sigma)\right)^{\frac{1}{2}} \sqrt{s_1}\right),
\end{align*} 
and by Lemma~\ref{lem: Hess_L2_line} we have
\begin{align*}
\forall s_1\in [0,(1-\beta)l-t], \forall u\in [0,1],\quad |J(t+s_1,u)|\ 
\le\ e^{\frac{2C_H(m,l)}{\sqrt{\beta l}}\sqrt{s_1}}|J(t,u)|.
\end{align*}
Integrating $u$ from $0$ to $1$ we get
\begin{align*}
\left|\tau(s_1,-)|_{[0,1]}\right|\ \le\ e^{\frac{2C_H(m,l)}{\sqrt{\beta
l}}\sqrt{s_1}}|\vec{v}|,
\end{align*}
and as $s_1$ is arbitrary in $[0,(1-\beta)l-t]$, we have
\begin{align}\label{eqn: core_distance}
\forall s\in [0,(1-\beta)l-t],\quad 
d_g\left(\psi^{\Sigma}_{s}(y),\sigma(t+s)\right)\ \le\
e^{\frac{2C_H(m,l)}{\sqrt{\beta l}}\sqrt{s}}d_g(y,\sigma(t)).
\end{align} 
By the definition of $H_r^t(\sigma)$ and the choice of $r$, we see that
$H_r^t(\sigma)= B_g(\sigma(t),r)$.
\end{proof}

From the proof of this lemma, we could clearly see that $H^t_r(\sigma)$ depends
on the specific manifold $M$ and geodesic $\sigma$, rather than being a uniform
neighborhood that we wish to find. In fact, it is impossible to get such a 
neighborhood in a uniform way; however, we notice that once the good
neighborhood $W$ is specified, the actual distance estimate (\ref{eqn:
core_distance}) only depends on the $L^2$ Hessian control of $r_{\Sigma}$ along
the interior of $\sigma$. Based on this observation, we will see in the sequel
that there is a subset $T_{\eta}^r(\sigma)\subset B_g(\sigma(t),r)$ of
sufficiently large measure, that resembles the key property of $H_r^t(\sigma)$:
the gradient flow lines of $\nabla r_{\Sigma}$ with initial data in
$T_{\eta}^r(\sigma)$ does \emph{not} spread too far away from $\sigma$.
Moreover, $T_{\eta,\varepsilon}^r(\sigma)$ is defined analytically and its
properties depend on the estimates uniformly.

\subsection{Parabolic approximation and effective estimates}
In order to define the desired subset that stays close to a given flow line of
$\nabla r_{\Sigma}$, we need to uniformly estimate the behavior of $r_{\Sigma}$,
especially bounding $Hess_{r_{\Sigma}}$. While impossible to control
$Hess_{r_{\Sigma}}$ in the $C^0$ sense, Colding and Naber observed in
\cite{ColdingNaber} that by parabolically smoothing $r_{\Sigma}$ (with $\Sigma$
being a single point in their setting), an $L^2_{loc}$ estimate around a given
flow line of $\nabla r_{\Sigma}$ is indeed possible, and is sufficient for the
purpose. For a general closed embedded submanifold $\Sigma$, we notice that once
the Laplacian comparison (\ref{eqn: Laplace_comparison}) for $r_{\Sigma}$ is
ready at hand, then all the estimates in \cite[\S 2]{ColdingNaber} go through
without any change, for the parabolic approximation of $r_{\Sigma}$. In this
subsection we summarize the relevant estimates and refer directly to the
corresponding ones in \cite[\S 2]{ColdingNaber}.

 Fix a minimal geodesic $\sigma:[0,l]\to M$ such that
$d_g(\sigma(t),\Sigma)=t$ for all $t\in [0,l)$, we let $p:=\sigma(0)\in
\Sigma$, $q:=\sigma(l)$ and denote $d^+(x):=l-d_g(q,x)$ for all $x\in M$.
We also put the notation 
\begin{align*}
M_{r,s}\ :=\ \left\{x\in M:\ r< l^{-1}r_{\Sigma}(x)<s\ \text{and}\ r< l^{-1}
d_g(q,x)<s \right\}.
\end{align*}
Now we consider the excess function $e^{\Sigma}:M\to \mathbb{R}$ defined as
$e^{\Sigma}:=r_{\Sigma}-d^+$. Since $r_{\Sigma}(x)\le d_g(x,p)$ for any $x\in
M$, we always have 
\begin{align}\label{eqn: compare_excess}
e^{\Sigma}(x)\le e_{p,q}(x),
\end{align}
 where $e_{p,q}(x)=d_g(p,x)-d^+(x)$ is the original excess function defined for
 a minimal geodesic connecting the two end points. By the excess function
 estimate due to Abresch and Gromoll \cite{AG90}, we have for any $t\in
 (0,l-r)$ (with $r>0$ sufficiently small),
\begin{align*}
\sup_{B_g(\sigma(t),r)} e^{\Sigma}\ \le\ \sup_{B_g(\sigma(t),r)} e_{p,q}\ \le\
C_{AG}(m)r^{1+\alpha_{AG}(m)},
\end{align*}
where $C_{AG}(m)>1$ and $\alpha_{AG}(m)\in (0,1)$ are dimensional constants. By
Proposition~\ref{prop: Laplace_comparison} we see that 
\begin{align}
\Delta e^{\Sigma}\ \le\ \frac{2C_{F_m}(l)}{r_{\Sigma}}.
\end{align} 
Consequently, we could invoke \cite[Corollary 2.4]{ColdingNaber} to obtain the
following estimate, which is a version of \cite[Theorem 2.8]{ColdingNaber}:
\begin{lemma}[Average excess estimate]\label{lem: excess}
For any $\beta\in (0,10^{-2})$ and $D\ge l$, there are constants
$C_{Ex}(m,D,\beta)>1$ and $\varepsilon_{Ex}(m,D,\beta)\in (0,1)$ such that if
$x\in M_{\beta,2}$ satisfies $e(x)\le \varepsilon^2l\le \varepsilon_{Ex}^2l$,
then
\begin{align}
\fint_{B_g(x,\varepsilon l)}e^{\Sigma}\ \le\ C_{Ex}\varepsilon^2l.
\end{align}
\end{lemma}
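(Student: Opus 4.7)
The plan is to transplant the argument of \cite[Theorem 2.8]{ColdingNaber} (which treats the case $\Sigma=\{p\}$ a single point) verbatim to our distance-to-submanifold setting. The two ingredients that Colding-Naber's proof actually relies on are: (i) a pointwise smallness of the excess at the center point; and (ii) a distributional upper bound on the Laplacian of the excess. Both are already at our disposal --- (i) is exactly our hypothesis $e^{\Sigma}(x)\le \varepsilon^{2}l$, while (ii) is the estimate (4.14) derived from Lemma~\ref{lem: Laplace_comparison}; together with the triangle inequality $r_{\Sigma}(y)+d_{g}(y,q)\ge l$ that gives $e^{\Sigma}\ge 0$ on $M$, we are in business.

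Concretely, I would first fix $\varepsilon_{Ex}(m,D,\beta)\in(0,1)$ so small that whenever $x\in M_{\beta,2}$ and $\varepsilon\le \varepsilon_{Ex}$, the $1$-Lipschitz property of $r_{\Sigma}$ and $d_{g}(q,\cdot)$ forces $B_{g}(x,\varepsilon l)\subset M_{\beta/2,3}$; any $\varepsilon_{Ex}\le\beta/2$ suffices for this inclusion. On such a ball the bound $r_{\Sigma}\ge \beta l/2$ combined with (4.14) yields the uniform estimate
\begin{align*}
0\ \le\ e^{\Sigma},\qquad \Delta e^{\Sigma}\ \le\ \frac{4\,C_{F_m}(l)}{\beta l}
\end{align*}
in the distributional sense throughout $B_{g}(x,\varepsilon l)$. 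I would then feed $u=e^{\Sigma}$ into \cite[Corollary 2.4]{ColdingNaber} applied on this ball, producing an inequality of the form
\begin{align*}
\fint_{B_{g}(x,\varepsilon l)}e^{\Sigma}\ \le\ C(m,D,\beta)\left(e^{\Sigma}(x)+(\varepsilon l)^{2}\sup_{B_{g}(x,\varepsilon l)}\Delta e^{\Sigma}\right).
\end{align*}
Substituting the hypothesis $e^{\Sigma}(x)\le \varepsilon^{2}l$ and the Laplacian bound above, the right-hand side collapses to $C_{Ex}(m,D,\beta)\varepsilon^{2}l$, which is precisely the claimed inequality, with $C_{Ex}$ depending on $m$, $D$, $\beta$ only through $C_{F_m}(l)$, $\beta^{-1}$, and Bishop-Gromov volume comparison constants.

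There is no essential obstacle here beyond bookkeeping: the one point I would verify carefully is that \cite[Corollary 2.4]{ColdingNaber}, which was originally stated using the distance-to-point function, in fact uses only the two structural inputs (i) and (ii) listed above together with Bishop-Gromov volume comparison on balls lying within a definite Ricci-lower-bounded region. Since none of these depend on the identity of the ``target'' of the distance function, replacing $d_{p}$ by $r_{\Sigma}$ causes no trouble. This is essentially the same observation already used throughout \S 4 to adapt Colding-Naber's Hessian-$L^2$ estimate (their Lemma 3.2, our Lemma~\ref{lem: Hess_L2_line}) and their local spreading estimate (our Lemma~\ref{lem: core_nbhd}) to the submanifold setting, so the adaptation here is of the same nature.
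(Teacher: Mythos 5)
Your proposal is correct and follows essentially the same route as the paper: the paper likewise combines the nonnegativity of $e^{\Sigma}$, the distributional Laplacian bound coming from Lemma~\ref{lem: Laplace_comparison} (so that $\Delta e^{\Sigma}\le 2C_{F_m}(l)/r_{\Sigma}$, hence a uniform bound of order $C_{F_m}(l)/(\beta l)$ once $\varepsilon_{Ex}$ is small enough to keep $B_g(x,\varepsilon l)$ inside a region where $r_{\Sigma}$ and $d_g(q,\cdot)$ are bounded below by a multiple of $\beta l$), and then invokes \cite[Corollary 2.4]{ColdingNaber} exactly as you do. The only difference is that the paper leaves the bookkeeping implicit, which you have spelled out correctly.
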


Now we let $\psi^{\pm}:M\to \mathbb{R}$ be the cut-off function given by
\cite[Lemma 2.6]{ColdingNaber} such that for some $\beta \in (0,10^{-2})$ we
have
\begin{align*}
\psi^-(x)=\begin{cases}
1\quad \text{if}\ \frac{\beta l}{4}<r_{\Sigma}(x)<8 l,\\
0\quad \text{if}\ r_{\Sigma}(x)\le \frac{\beta l}{16}\ \text{or}\
r_{\Sigma}(x)>16l;
\end{cases}
\psi^+(x)=\begin{cases}
1\quad \text{if}\ \frac{\beta l}{4}<d_g(q,x)<8 l,\\
0\quad \text{if}\ d_g(q,x)\le \frac{\beta l}{16}\ \text{or}\
d_g(q,x)>16l.
\end{cases}
\end{align*}
We now put $\psi:=\psi^+\psi^-$ and evolve $\psi r_{\Sigma}$, $\psi d^+$ and
$\psi e^{\Sigma}$ by the heat equation to obtain smooth functions $h_t$,
$d^+_t$ and $e^{\Sigma}_t$ on $M$, i.e.
we have 
\begin{align*}
(\partial_t-\Delta)h_t= 0\ \text{with}\ h_0 = \psi r_{\Sigma},\ 
(\partial_t-\Delta)d^+_t = 0\ \text{with}\ d^+_0 = \psi d^+,\ 
\text{and}\ (\partial_t-\Delta)e^{\Sigma}_t = 0\ \text{with}\ e^{\Sigma}_0=
\psi e^{\Sigma}.
\end{align*}
By uniqueness we clearly have $e^{\Sigma}_t=h_t-d^+_t$.

Now by (\ref{eqn: Laplace_comparison}) and \cite[Lemma 2.6]{ColdingNaber}, we
could estimate
\begin{align}
\Delta h_0\ =\ r_{\Sigma}\Delta \psi+2\langle \nabla \psi,\nabla
r_{\Sigma}\rangle+\psi \Delta r_{\Sigma}\ \le\ C(m,D,\beta)l^{-1},
\end{align}
where $C(m,D,\beta)$ depends on $C_{F_m}(D)$. Similarly, $\Delta d_0^+\ge 
-C(m,D,\beta)l^{-1}$ and $\Delta e^{\Sigma}_0\le C(m,D,\beta)l^{-1}$.
Moreover, $\Delta h_0$, $\Delta d_0^+$ and $\Delta e_0^{\Sigma}$ are supported
in $M_{\frac{\beta }{16},16}$. Consequently, by the proof of \cite[Lemma
2.10]{ColdingNaber} we see that for some positive constant $C(m,D,\beta)>0$,
\begin{align*}
\max\left\{\Delta h_t,-\Delta d_t^+,\ \Delta e^{\Sigma}_t\right\}\ \le\
C(m,D,\beta)l^{-1}.
\end{align*}
We could then plug this estimate into \cite[Lemma 2.11 and Lemma
2.13]{ColdingNaber} to obtain some new constants $C_{C^0}(m,D,\beta)>0$ and
$\bar{\varepsilon}_{C^0}(m,D,\beta)>0$ such that for any $\varepsilon\in
(0,\bar{\varepsilon}_{C^0})$,
\begin{align}
\sup_{M_{\frac{\beta}2,4}} \left|h_{\varepsilon^2l^2}-r_{\Sigma}\right|\le
C_{C^0}\left(\varepsilon^2l+e^{\Sigma}\right).
\end{align}
Moreover, since for any $\varepsilon$-geodesic $\sigma'$ connecting $p$ and $q$,
it holds $e(\sigma(t))<\varepsilon^2l$, and so does
$e_{p,q}^{\Sigma}(\sigma(t))$ by (\ref{eqn: compare_excess}), we have, by
\cite[Corollary 2.16]{ColdingNaber} that 
\begin{align}\label{eqn: h_C0}
\sup_{\sigma'\cap
M_{\frac{\beta}{2},4}}\left|h_{\varepsilon^2l^2}-r_{\Sigma}\right|\ \le\
C_{C^0}\varepsilon^2 l,\quad \text{and}\quad \sup_{t\in
(\frac{\beta}{2}l,(1-\frac{\beta}{2})l)}
\left|h_{\varepsilon^2l^2}(\sigma'(t))-t\right|\ \le\
C_{C^0}\varepsilon^2 l.
\end{align}
The gradient upper bound of $h_t$ could be obtained by the Bochner formula and
Li-Yau heat kernel upper bound (see \cite{LiYau}), as done in \cite[Lemma
2.17]{ColdingNaber}:
\begin{align}
\sup_{M_{\frac{\beta}2,4}} \left|\nabla h_{\varepsilon^2l^2}\right|\ \le\
1+C_{C^1}(m,D,\beta)\varepsilon^2l^2.
\end{align}
This estimate, together with (\ref{eqn: h_C0}) and \cite[Lemma
2.1]{ColdingNaber}, then implies an $H^1_{loc}$ estimate of
$h_{\varepsilon^2l^2}$ around the interior of the geodesic curve $\sigma$ as in
\cite[Theorem 2.18]{ColdingNaber}. Integration by parts along $\sigma$ and in
time, we could then obtain an $H^2_{loc}$ estimate of $h_{\varepsilon^2l^2}$,
as done in \cite[Theorem 2.19 and Lemma 2.20]{ColdingNaber} --- the proofs are
identical since $h_t$ satisfies exactly the same estimate as $h^-_t$ in
\cite[\S 2]{ColdingNaber}, and we only record the needed estimates:
\begin{proposition}\label{prop: approx_est} 
For each $\beta\in (0,10^{-2})$ and $D\ge l$, there are positive constants
$C_{Ap}(m,D,\beta)>1$ and $r_{Ap}(m,D,\beta)\le \varepsilon_{Ex}$
to the following effects: $\forall \varepsilon\in (0,r_{Ap}]$,
$\exists c^2\in [\frac{1}{2},2]$, such that
\begin{align}\label{eqn: Hess_sigma}
\int_{\beta l}^{(1-\beta)l}\left(\fint_{B_g(\sigma(s),\varepsilon l)}
\left|Hess_{h_{c^2\varepsilon^2l^2}}\right|^2\ \dvol_g\right)\ \text{d}s\ \le\
C_{Ap}l^{-2};
\end{align}
moreover, for any smooth point $x\in M_{\beta,2}$ with $e^{\Sigma}(x)\le
\varepsilon^2l$, let $\sigma_x$ denote the integral curve of $\nabla
r_{\Sigma}$ passing through $x$, then 
\begin{align}\label{eqn: H1_x}
\forall \beta r_{\Sigma}(x)\le s<t\le r_{\Sigma}(x), \quad
\int_{s}^t\left|\nabla h_{\varepsilon^2l^2}-\nabla
r_{\Sigma}\right|(\sigma_x(u))\ \text{d}u\ \le\ C_{Ap}\varepsilon
\sqrt{\frac{t-s}{l}}.
\end{align}
\end{proposition}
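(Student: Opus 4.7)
The plan is to follow the heat--smoothing scheme of Colding--Naber \cite{ColdingNaber}[\S 2] essentially verbatim, leveraging the ingredients already established in this section: the Laplacian comparison (\ref{eqn: Laplace_comparison}), the average excess estimate of Lemma~\ref{lem: excess}, the pointwise bound (\ref{eqn: h_C0}), and the gradient bound $\left|\nabla h_{\varepsilon^2 l^2}\right|\le 1+C_{C^1}\varepsilon^2 l^2$. The only new content is checking that the submanifold case is compatible with the single-point case treated in \cite{ColdingNaber}, which reduces to verifying our Laplacian comparison for $r_{\Sigma}$ plays the same role as the ordinary one for $d_p$ in their argument.

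For estimate (\ref{eqn: Hess_sigma}), I would start with the Bochner formula applied to the smoothed function $h_t$,
\begin{align*}
(\partial_t-\Delta)|\nabla h_t|^2\ =\ -2|Hess_{h_t}|^2-2\Rc_g(\nabla h_t,\nabla h_t),
\end{align*}
multiply by a space--time cut-off $\phi^2$ concentrated in a thin tube around $\sigma\cap M_{\beta/2, 4}$, and integrate by parts in both space and time. Using the $C^1$ bound on $\nabla h_t$ together with $\Rc_g\ge -(m-1)g$, the boundary and gradient error terms are controlled by $C(m,D,\beta)l^{-2}$ times the length of the time interval. A Chebyshev-type pigeonhole over $c^2\in[\frac{1}{2},2]$ then selects one specific time $c^2\varepsilon^2l^2$ at which the spatial $L^2$ Hessian norm, integrated over a tube around $\sigma$, is bounded as claimed. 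A Fubini argument with volume comparison converts the integral over the tube into the ball-averaged formulation stated in (\ref{eqn: Hess_sigma}); this is the exact scheme of \cite{ColdingNaber}[Theorem 2.19 and Lemma 2.20].

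For estimate (\ref{eqn: H1_x}), the key idea is that $\sigma_x$, being an integral curve of $\nabla r_{\Sigma}$ with $|\dot{\sigma}_x|\equiv 1$, satisfies
\begin{align*}
\frac{d}{du}\left[h_{\varepsilon^2 l^2}(\sigma_x(u))\right]\ =\ \langle\nabla h_{\varepsilon^2 l^2},\nabla r_{\Sigma}\rangle(\sigma_x(u)).
\end{align*}
Integrating from $s$ to $t$ and invoking the pointwise bound (\ref{eqn: h_C0}), together with $r_{\Sigma}(\sigma_x(t))-r_{\Sigma}(\sigma_x(s))=t-s$, gives
\begin{align*}
\left|\int_s^t \langle\nabla h_{\varepsilon^2 l^2},\nabla r_{\Sigma}\rangle\,\text{d}u-(t-s)\right|\ \le\ 2C_{C^0}\varepsilon^2 l.
\end{align*}
Expanding $|\nabla h_{\varepsilon^2 l^2}-\nabla r_{\Sigma}|^2=|\nabla h_{\varepsilon^2 l^2}|^2-2\langle \nabla h_{\varepsilon^2 l^2},\nabla r_{\Sigma}\rangle+1$ and using the gradient bound on $h_{\varepsilon^2 l^2}$ yields $\int_s^t |\nabla h_{\varepsilon^2l^2}-\nabla r_{\Sigma}|^2\,\text{d}u\le C(m,D,\beta)\varepsilon^2 l$, after which Cauchy--Schwarz produces the desired $\varepsilon\sqrt{(t-s)/l}$ bound.

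The main technical obstacle is ensuring that both arguments genuinely operate inside the good region $M_{\beta/2,4}$ where the pointwise $h_t$ and gradient estimates are valid, and that the integral curve $\sigma_x$ does not exit this region before traveling the required distance. The latter is controlled by imposing $e^{\Sigma}(x)\le\varepsilon^2 l$ together with $\varepsilon\le r_{Ap}(m,D,\beta)$ sufficiently small; the former is built into the support of the cut-off $\psi=\psi^+\psi^-$. A secondary subtlety is that, unlike the original Colding--Naber setting, the ``level set'' $r_{\Sigma}^{-1}(0)=\Sigma$ is extended rather than a single point, but since our Laplacian bound $\Delta r_{\Sigma}\le F_m(r_{\Sigma})/r_{\Sigma}$ has the identical asymptotic shape as $\Delta d_p\le F_m(d_p)/d_p$, no additional terms appear in the integration-by-parts computations, and the dimensional constants absorb the geometry of $\Sigma$ through $D\ge l$ alone.
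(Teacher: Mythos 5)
Your overall route is the paper's route: both defer to the Colding--Naber machinery of \cite{ColdingNaber}, \S 2, with the Laplacian comparison \eqref{eqn: Laplace_comparison} for $r_{\Sigma}$ as the only genuinely new input, and your derivation of \eqref{eqn: H1_x} (fundamental theorem of calculus along the flow line, $C^0$ closeness of $h_{\varepsilon^2l^2}$ to $r_{\Sigma}$, the gradient upper bound, expanding $|\nabla h-\nabla r_{\Sigma}|^2$ and Cauchy--Schwarz) is exactly the intended argument, up to the harmless factor of $l\le D$ between $\sqrt{(t-s)l}$ and $\sqrt{(t-s)/l}$. One small imprecision there: the bound you need along $\sigma_x$ is the unnumbered estimate $\sup_{M_{\beta/2,4}}|h_{\varepsilon^2l^2}-r_{\Sigma}|\le C_{C^0}(\varepsilon^2l+e^{\Sigma})$ combined with the monotonicity of $e^{\Sigma}$ along the flow toward $\Sigma$ (so that $e^{\Sigma}(\sigma_x(u))\le e^{\Sigma}(x)\le\varepsilon^2l$), rather than \eqref{eqn: h_C0} itself, which is stated only for $\varepsilon$-geodesics joining $p$ to $q$.

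For \eqref{eqn: Hess_sigma}, however, your error accounting has a genuine gap. After multiplying $(\partial_t-\Delta)|\nabla h_t|^2=-2|Hess_{h_t}|^2-2\Rc_g(\nabla h_t,\nabla h_t)$ by a cut-off and integrating in space--time, you claim that the $C^1$ bound $|\nabla h_t|\le 1+C_{C^1}\varepsilon^2l^2$ controls ``the boundary and gradient error terms by $C(m,D,\beta)l^{-2}$ times the length of the time interval.'' This fails for the time-boundary (telescoping) term $\int\phi^2\left(|\nabla h_{t_1}|^2-|\nabla h_{t_2}|^2\right)$: the gradient bound is one-sided, there is no pointwise lower bound on $|\nabla h_{t_2}|^2$, so this term is only $O(1)$ per unit volume, and after the pigeonhole division by $t_2-t_1\approx\varepsilon^2l^2$ you obtain $C\varepsilon^{-2}l^{-2}$ rather than $Cl^{-2}$ (a thin-tube cut-off likewise has $|\Delta\phi^2|\sim(\varepsilon l)^{-2}$, not $l^{-2}$). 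The $\varepsilon$-independence of $C_{Ap}$ is precisely what is consumed in \eqref{eqn: distance_3}, where the Hessian bound is taken with $\varepsilon l=r$ arbitrarily small; a loss of $\varepsilon^{-2}$ destroys the proportionality to $r$ in \eqref{eqn: distance_estimate} and hence Theorem~\ref{thm: main3}. The missing ingredient is the intermediate $H^1_{loc}$ estimate (the analogue of \cite[Theorem 2.18]{ColdingNaber}, built from the average excess estimate of Lemma~\ref{lem: excess} and the $C^0$ bounds), which shows that the ball-averages of $1-|\nabla h_t|^2$ along the interior of $\sigma$ are small at both cut-off times; feeding this into the telescoping term, and writing $\int\Delta(\phi^2)\,|\nabla h_t|^2=\int\Delta(\phi^2)\left(|\nabla h_t|^2-1\right)$ so the cut-off term also only sees the defect, is what produces the $\varepsilon$-independent bound. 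This is exactly the step the paper flags before the proposition (``implies an $H^1_{loc}$ estimate \ldots Integration by parts along $\sigma$ and in time, we could then obtain an $H^2_{loc}$ estimate''), and your proposal omits it.
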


\subsection{Effective control of the geodesic spreading}
In this subsection, we prove the desired estimate of the distance between two
integral curves of $\nabla r_{\Sigma}$ in Theorem~\ref{thm: main3}.
This relies on the existence of some subset that remains (up to certain time)
close to a given flow line of $\nabla r_{\Sigma}$. The definition of such a set
is due to Colding and Naber \cite{ColdingNaber}. While our argument mimics the
original one in \cite[\S 3]{ColdingNaber}, it is much simplified thanks to
\cite[Proposition 3.6 and Corollary 3.7]{ColdingNaber}. In fact,
\cite[Proposition 3.6]{ColdingNaber} is the major technical input in Colding
and Naber's work, utilizing all estimates obtained from the parabolic
approximation in \cite[\S 2]{ColdingNaber} --- the proof of Theorem~\ref{thm:
main3} not just borrows from Colding and Naber's arguments, but also relies on
their results.

As in the last subsection, we consider a closed embedded 
submanifold $\Sigma\subset M$. Fixing any $q\in M\backslash \Sigma$, we let
$\sigma$ be a unit-speed minimal geodesic realizing $r_{\Sigma}(q)=:l$, and let
$q:=\sigma(l)$ and $p:=\sigma(0)\in \Sigma$. Since $\sigma$ is a minimal
geodesic connecting its two end points $p$ and $q$, we could apply
\cite[Proposition 3.6 and Corollary 3.7]{ColdingNaber} to $\sigma$ and see
\begin{lemma}[Interior volume comparison]\label{lem: volume_Harnack}
Suppose $\frac{1}{4}\le l\le D$, then there exist positive constants 
$\varepsilon_{CN}(m,D,\beta)<1$ and $r_{CN}(m,D,\beta)< 1$ such
that if $s,t\in [\beta l, (1-\beta)l]$ satisfy $|s-t|<\varepsilon_{CN}l$, then
for any $r\in (0,r_{CN}l]$,
\begin{align}\label{eqn: volume_estimate}
\frac{1}{2}\ \le\ \frac{B_g(\sigma(s),r)}{B_g(\sigma(t),r)}\ \le\ 2.
\end{align}
\end{lemma}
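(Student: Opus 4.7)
The plan is to apply \cite[Proposition 3.6 and Corollary 3.7]{ColdingNaber} essentially verbatim, exploiting the fact that $\sigma:[0,l]\to M$ is a unit-speed minimal geodesic between its two endpoints $p=\sigma(0)$ and $q=\sigma(l)$. Thus $\sigma$ falls into the exact setting of Colding--Naber, provided that all analytic inputs required by their argument have counterparts in our situation where one of the endpoints lies on $\Sigma$. I would therefore spend the proof checking that this translation is lossless.

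The inputs required in \cite[\S 3]{ColdingNaber} are: a Laplacian upper bound on the distance from each endpoint, smallness of the excess along the interior of $\sigma$, and the $L^2_{loc}$ Hessian estimate plus first-order comparison with the gradient flow for the parabolic approximations of those distance functions. In our setting, the distance $d^+$ from $q$ is covered by the classical Laplacian comparison, while the distance $r_\Sigma$ from $\Sigma$ is handled by Lemma~\ref{lem: Laplace_comparison}; the excess $e^\Sigma=r_\Sigma-d^+$ is pointwise dominated by the usual excess $e_{p,q}$ via \eqref{eqn: compare_excess}, so the Abresch--Gromoll estimate still yields the average control of Lemma~\ref{lem: excess}. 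Finally, the parabolic approximation $h_{\varepsilon^2 l^2}$ of $\psi\,r_\Sigma$ satisfies \eqref{eqn: Hess_sigma} and \eqref{eqn: H1_x} of Proposition~\ref{prop: approx_est}, which are the exact analogues of the estimates Colding--Naber derive for the parabolic smoothing of the distance function to a point.

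With these ingredients verified, the volume Harnack deduction is immediate: choosing $\varepsilon_{CN}$ and $r_{CN}$ so that $\varepsilon:= r/l \le r_{Ap}$ and $|s-t| \le \varepsilon_{CN} l$ lies within the range where the Hessian integral $\int|Hess_{h_{c^2\varepsilon^2 l^2}}|^2$ is $O(l^{-2})$, one constructs a subset of $B_g(\sigma(t), r)$ of almost full measure along which the gradient flow $\psi^\Sigma_{s-t}$ of $\nabla r_\Sigma$ is an almost-isometry onto a subset of $B_g(\sigma(s), r(1+o(1)))$. Pairing this with Bishop--Gromov on the slightly larger ball and swapping the roles of $s$ and $t$ gives the two-sided ratio bound \eqref{eqn: volume_estimate}.

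The main obstacle is purely bookkeeping: making sure the constants $\varepsilon_{CN}$ and $r_{CN}$ depend only on $m, D, \beta$ and not on $\sigma$ or on $\Sigma$. This is guaranteed because every constant appearing in Lemmas \ref{lem: Laplace_comparison}, \ref{lem: Hess_L2_line}, \ref{lem: excess} and Proposition~\ref{prop: approx_est} depends solely on $m$, $D$, and $\beta$, so the uniformity in Colding--Naber's conclusion transfers without change.
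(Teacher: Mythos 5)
Your opening observation is exactly the paper's proof: because the statement of the lemma involves only the minimal geodesic $\sigma$ and its two endpoints $p=\sigma(0)$, $q=\sigma(l)$ --- and says nothing about $\Sigma$ --- one simply applies \cite[Proposition 3.6 and Corollary 3.7]{ColdingNaber} to $\sigma$ as a point-to-point minimal geodesic, with the ordinary distance functions $d_p$ and $d_q$. None of the $\Sigma$-adapted inputs you propose to verify (Lemma~\ref{lem: Laplace_comparison}, Lemma~\ref{lem: excess}, Proposition~\ref{prop: approx_est}) is needed for this particular lemma; they are needed later, for the $r_\Sigma$-version of the distance estimate, not here. The only genuine checking is the rescaling bookkeeping: Colding--Naber prove the statement for $l=1$ with dimensional constants, and passing to $l\in[\tfrac14,D]$ is what makes $\varepsilon_{CN}$ and $r_{CN}$ depend on $(m,D,\beta)$, through $C_{F_m}(D)$ in the Laplacian comparison and the lower bound $l\ge\tfrac14$; see Remark~\ref{rmk: l_range}.

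The genuine gap is in your third paragraph. The claimed ``immediate'' deduction --- build an almost-full-measure subset of $B_g(\sigma(t),r)$ on which the flow $\psi^\Sigma_{s-t}$ of $\nabla r_\Sigma$ is an almost-isometry, then conclude the volume ratio bound --- reverses the logical order of the argument and is circular as sketched. In the paper, the volume comparison of this lemma is an \emph{input} to that flow control: in the proof of Lemma~\ref{lem: distance_estimate}, the segment-inequality estimate \eqref{eqn: distance_3} for $I_s^t(\sigma,r)$, which is what makes the flow almost distance-preserving on the good set, uses precisely the ratio bound \eqref{eqn: volume_estimate} to compare $|B_g(\sigma(t),r)|$ with $|B_g(\sigma(t+u),2r)|$. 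To obtain the volume comparison from flow estimates without that input you would have to reproduce Colding--Naber's bootstrap proof of their Proposition 3.6 (their hardest technical step, an induction over scales in which volume control and flow control are established simultaneously), adapted to $r_\Sigma$ --- which is exactly the work that citing their point-to-point result is designed to avoid. There is also a minor directional issue: for $s<t$ your map $\psi^\Sigma_{s-t}$ is a backward flow of $\nabla r_\Sigma$, for which the estimates of \S 4.2--4.4 are not stated. Replace the third paragraph by the direct citation of Colding--Naber for the geodesic from $p$ to $q$, plus the rescaling remark, and the proof is complete.
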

\begin{remark}\label{rmk: l_range}
In \cite[\S 3]{ColdingNaber}, this result is proven for $l=1$, and the
constants there only depend on the dimension. When $l\approx D>1$, the
constants $r_{CN}$ and $\varepsilon_{CN}$ are affected by $C_{F_m}(D)$ in
the Laplacian comparison, while the lower bound $l\ge \frac{1}{4}$ is
 required essentially due to (\ref{eqn: Hess_sigma}).
\end{remark}
We now make the notation for the scale 
$\bar{r}_0(m,D,\beta):=\frac{1}{4}\min\left\{10^{-2}\beta,
r_{Ap}, r_{CN}\right\}$, and define for any $r\in (0,\bar{r}_0]$ the subset 
\begin{align}\label{eqn: defn_big_A} 
\mathcal{A}_s^t(\sigma,r)\ :=\ \left\{z\in B_g(\sigma(t),r):\ \forall u\in 
[0,sl],\ \psi^{\Sigma}_u(z)\in B_g(\sigma(t+u),2r)\right\}.
\end{align}
Clearly, $\mathcal{A}_0^t(\sigma,r)=B_g(\sigma(t),r)$, since $\psi^{\Sigma}_0$
is the identity map; also notice that when $r,s>0$ are very small,
$H_r^t(\sigma)\subset \mathcal{A}_{s}^t(\sigma,r)$ by (\ref{eqn:
core_distance}).

We also let $\chi_{\sigma}^{s,t}$ be the characteristic function of
$\mathcal{A}_s^t(\sigma, r)\times \mathcal{A}_s^t(\sigma, r)$ in
$B_g(\sigma(t),r)\times B_g(\sigma(t),r)$, then for any $s\in [0,l-\beta 
l-t]$ and $\eta\in (0,10^{-2})$, we define quantities 
\begin{align}
\mathcal{F}^r_{\sigma}(x,y;s)\ :=\ &\int_0^{s}\chi_{\sigma}^{u,t}(x,y)
\left(\int_{\gamma_{\psi^{\Sigma}_u(x),
\psi^{\Sigma}_u(y)}}\left|Hess_{h_{c^2r^2}}\right|\right)\ \text{d}u, 
\label{eqn: Fxy}\\
\text{and}\quad I_{s}^t(\sigma,r)\ :=\ &\fint_{B_g(\sigma(t),r)\times
B_g(\sigma(t),r)}\mathcal{F}_{\sigma}^r(x,y;s)\
\dvol_g(x)\dvol_g(y),
\label{eqn: int_I}
\end{align}
where the constant $c^2\in [\frac{1}{2},2]$ depends on $r\le r_{Ap}l$ and is
guaranteed to exist by Proposition~\ref{prop: approx_est}. We also define the
subsets (notice that we omit writing the dependence on $t\in [\beta
l,(1-\beta)l]$)
\begin{align}
T_{\eta,s}^r(\sigma):= \left\{x\in B_g(\sigma(t),r):\
e^{\Sigma}(x)\le \frac{C_{Ex} r^2}{\eta l},\
\fint_{B_g(\sigma(t),r)}\mathcal{F}_{\sigma}^r(x,y;s)\ \dvol_g(y)\
\le \frac{I_{s}^t(\sigma,r)}{\eta}\right\},
\end{align}
and for each $x\in T_{\eta,s}^r(\sigma)$ we define
\begin{align}
T_{\eta,s}^r(\sigma,x):=\left\{y\in B_g(\sigma(t),r):\
e^{\Sigma}(y)\le \frac{C_{Ex} r^2}{\eta l},\
\mathcal{F}_{\sigma}^r(x,y;s)\ \le\
\frac{I_{s}^t(\sigma,r)}{\eta^2}\right\}.
\end{align}
By the average excess function estimate in Lemma~\ref{lem: excess} applied to
$B_g(\sigma(t),r)$ and Chebyshev's inequality, we have
\begin{align}
\frac{\left|T_{\eta,s}^r
(\sigma)\right|}{\left|B_g(\sigma(t),r)\right|}\ \ge\
1-2\eta,\quad\text{and}\quad \forall x\in T_{\eta,s}^r(\sigma),\
\frac{\left|T_{\eta,s}^r
(\sigma,x)\right|}{\left|B_g(\sigma(t),r)\right|}\ \ge\ 1-2\eta.
\label{eqn: Chebyshev}
\end{align}
Notice that these estimates are \emph{uniform}, and we would like to first
understand how the analytic conditions defining $T_{\eta,s}^r(\sigma)$
could affect the spreading of the flow lines of $\nabla r_{\Sigma}$:
\begin{lemma}[Effective distance estimate]\label{lem: distance_estimate}	
Fix $\eta \in (0,10^{-2})$ and $D\ge 1$ such that $\frac{1}{4}\le l\le D$, then 
there are constants $C_0(m,D,\beta)>0$ and $\varepsilon_0(m,D,\beta,\eta)\in
 (0,1)$ such that for any $s\in [0,\varepsilon_0 l]$ and any $r\in
 (0,\bar{r}_0]$, every pair of smooth points $x_1\in  T_{\eta,s}^r
 (\sigma)$ and $ x_2\in T_{\eta,s}^r(\sigma,x)\cap
 \mathcal{A}_s^t(\sigma,\xi r)$ (where we set the notation $\xi:=10^{-2}$), 
\begin{align}
\left|d_g\left(\psi^{\Sigma}_s(x_1),
\psi^{\Sigma}_s(x_2)\right)-d_g(x_1,x_2)\right|\ \le\ C_0\eta^{-2}r
\sqrt{s\slash l}.
\label{eqn: distance_estimate}
\end{align}
Especially, $\left|T_{\eta,s}^r(\sigma)\backslash
\mathcal{A}_{\varepsilon_0l}^t(\sigma,r)\right|=0$ for all $s\in
[0,\varepsilon_0 l]$.
\end{lemma}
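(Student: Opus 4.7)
My plan is to follow the blueprint of Colding and Naber \cite[\S 3]{ColdingNaber}: compute the $u$-derivative of $L_u:=d_g(\psi_u^\Sigma(x_1),\psi_u^\Sigma(x_2))$ via the first variation of arc length, swap the non-smooth $\nabla r_\Sigma$ for the smoothed $\nabla h_{c^2r^2}$ (paying the swap cost with the $H^1_{loc}$ estimate (\ref{eqn: H1_x})), and then bound the residual integrated Hessian term using the set membership $x_2\in T_{\eta,s}^r(\sigma,x_1)$. Concretely, for a.e.\ $u\in[0,s]$ pick a unit-speed minimizer $\gamma_u:[0,L_u]\to M$ from $\psi_u^\Sigma(x_1)$ to $\psi_u^\Sigma(x_2)$; the first variation formula gives $dL_u/du=\langle\nabla r_\Sigma,\dot\gamma_u\rangle\big|_0^{L_u}$. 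Adding and subtracting $\nabla h_{c^2r^2}$ at each endpoint and applying the fundamental theorem of calculus to $\langle\nabla h_{c^2r^2},\dot\gamma_u\rangle$ along $\gamma_u$, this derivative rewrites as $\int_{\gamma_u}Hess_{h_{c^2r^2}}(\dot\gamma_u,\dot\gamma_u)\,dv$ plus two endpoint swap terms $\pm\langle(\nabla r_\Sigma-\nabla h_{c^2r^2})(\psi_u^\Sigma(x_i)),\dot\gamma_u\rangle$. Integrating in $u$ over $[0,s]$ (on which $\chi_\sigma^{u,t}(x_1,x_2)\equiv 1$ by the bootstrap below), and invoking $x_2\in T_{\eta,s}^r(\sigma,x_1)$ for the main term and (\ref{eqn: H1_x}) for the swap terms, I obtain
\begin{align*}
|L_s-L_0|\ \le\ \frac{I_s^t(\sigma,r)}{\eta^2}+C(m,D,\beta,\eta)\,r\sqrt{s/l}.
\end{align*}

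The main technical task is the bound $I_s^t(\sigma,r)\le C(m,D,\beta)\,r\sqrt{s/l}$. Interchanging the order of integration, for each fixed $u\in[0,s]$ the inner expression is an average over $B_g(\sigma(t),r)^2$ of line integrals $\int_{\gamma_{\psi_u^\Sigma(x),\psi_u^\Sigma(y)}}|Hess_{h_{c^2r^2}}|$, taken only over pairs whose images lie in $B_g(\sigma(t+u),2r)$. I would handle this by applying the segment inequality on $B_g(\sigma(t+u),2r)$ to replace the line integral by $Cr\cdot\fint_{B_g(\sigma(t+u),Cr)}|Hess_{h_{c^2r^2}}|$, and then using the interior volume comparison of Lemma~\ref{lem: volume_Harnack} to absorb the change of base point from $\sigma(t+u)$ back to $\sigma(t)$ at no worse than a bounded multiplicative cost. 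Finally Cauchy--Schwarz in $u$, followed by the averaged $L^2$-Hessian bound (\ref{eqn: Hess_sigma}) of Proposition~\ref{prop: approx_est}, delivers the estimate. This packaging---ensuring that the non-smooth flow $\psi_u^\Sigma$ does not distort measures badly enough to spoil a uniform change-of-variables estimate under only a Ricci lower bound---will be the hard part, and is exactly what \cite[Proposition 3.6 and Corollary 3.7]{ColdingNaber} deliver, whose hypotheses are met since $\sigma$ is a unit-speed minimizer in $(M,g)$ with $l\ge 1/4$.

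For the final assertion, I propose a bootstrap in $u$. Let $u_\ast\in(0,\varepsilon_0l]$ be maximal such that $|T_{\eta,u'}^r(\sigma)\setminus\mathcal A_{u'}^t(\sigma,r)|=0$ for all $u'\le u_\ast$; continuity of the flow and $\mathcal A_0^t(\sigma,r)=B_g(\sigma(t),r)$ force $u_\ast>0$. Fix $u<u_\ast$ and a.e.\ $x\in T_{\eta,u}^r(\sigma)$. Chebyshev (\ref{eqn: Chebyshev}), combined with the inductive fact that $|\mathcal A_u^t(\sigma,\xi r)|$ stays comparable to $|B_g(\sigma(t),\xi r)|$ on $[0,u_\ast]$, lets us select $y\in T_{\eta,u}^r(\sigma,x)\cap\mathcal A_u^t(\sigma,\xi r)$. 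Applying (\ref{eqn: distance_estimate}) to the pair $(x,y)$ and using $\psi_u^\Sigma(y)\in B_g(\sigma(t+u),2\xi r)$ yields
\begin{align*}
d_g(\psi_u^\Sigma(x),\sigma(t+u))\ \le\ d_g(x,y)+C_0\eta^{-2}r\sqrt{u/l}+2\xi r\ <\ 2r,
\end{align*}
provided $\varepsilon_0(m,D,\beta,\eta)$ is chosen small enough that $C_0\eta^{-2}\sqrt{\varepsilon_0}+3\xi<1$. This strict inequality lets us extend $u_\ast$ up to the upper endpoint $\varepsilon_0 l$. The subtle point here is to verify that the measure of $\mathcal A_u^t(\sigma,\xi r)$ (and hence of the intersection used to extract $y$) does not degenerate as $u$ grows, which is exactly what the uniformity of the distance estimate together with the volume comparison buys us.
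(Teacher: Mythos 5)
Your proof of the pair estimate (\ref{eqn: distance_estimate}) is essentially the paper's argument: the first-variation identity you re-derive is exactly the inequality the paper imports from Colding--Naber's Lemma 3.4, the swap terms are controlled by (\ref{eqn: H1_x}), the main term is bounded by $\eta^{-2}I_s^t(\sigma,r)$ via the definition of $T^r_{\eta,s}(\sigma,x_1)$, and $I_s^t(\sigma,r)$ is estimated by the segment inequality, the interior volume comparison of Lemma~\ref{lem: volume_Harnack}, and Cauchy--Schwarz against (\ref{eqn: Hess_sigma}). One point you gloss over: $\chi^{u,t}_{\sigma}(x_1,x_2)\equiv 1$ requires $x_1\in\mathcal{A}^t_u(\sigma,r)$, which is not among the hypotheses; the paper repairs this by introducing the exit time $\varepsilon(x_1)$, proving the estimate on $[0,\varepsilon(x_1)]$, and then using $x_2\in\mathcal{A}^t_s(\sigma,\xi r)$ together with the triangle inequality to contradict maximality and conclude $\varepsilon(x_1)\ge\varepsilon_0 l$. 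Your phrase ``by the bootstrap below'' is circular as written, but this exit-time device fixes it.

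The genuine gap is in your bootstrap for the final assertion. To extract $y\in T^r_{\eta,u}(\sigma,x)\cap\mathcal{A}^t_u(\sigma,\xi r)$ you invoke an ``inductive fact'' that $\left|\mathcal{A}^t_u(\sigma,\xi r)\right|$ stays comparable to $\left|B_g(\sigma(t),\xi r)\right|$, but your induction hypothesis is the full-measure statement at radius $r$, which says nothing about points of the $\xi r$-ball whose flow stays within $2\xi r$ of $\sigma$; to get it at radius $\xi r$ you would need the same statement at radius $\xi^2 r$, and so on --- an infinite regress. Moreover, even granting that comparability, the Chebyshev count (\ref{eqn: Chebyshev}) does not yield the desired intersection point for an arbitrary $\eta<10^{-2}$: the exceptional set of $T^r_{\eta,u}(\sigma,x)$ inside $B_g(\sigma(t),r)$ has measure up to $2\eta\left|B_g(\sigma(t),r)\right|$, which can swallow all of $B_g(\sigma(t),\xi r)$, whose relative volume is only of order $\xi^m$ with $\xi=10^{-2}$; one must take $\eta$ small in terms of the volume ratio at scale factor $\xi$, hence depending on $m$, $D$, $\beta$. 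This is precisely how the paper proceeds in Lemma~\ref{lem: big_A}: $\eta$ is fixed there in terms of $\Lambda^m_{-1}(\xi r)\slash\Lambda^m_{-1}(r)$, and the regress is terminated not by a continuity-in-time argument but by descending a finite chain of scales $r_i=\xi^i r$ down to the (non-uniform) core-neighborhood scale $r'(M,\sigma)$ of Lemma~\ref{lem: core_nbhd}, where (\ref{eqn: core_inclusion}) gives membership in the sets $\mathcal{A}$ outright, and then propagating closeness back up the scales using the pair estimate at each step. Without this multi-scale anchoring your extension of $u_{\ast}$ cannot get started, so the last claim is not established by your argument; note that in the paper, too, its unconditional form is really the content of Lemma~\ref{lem: big_A} rather than of the present lemma alone.
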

\begin{proof}
Recalling that $\bar{r}_0(m,D,\beta)=\frac{1}{4}\min\left\{10^{-2}\beta,
r_{Ap}, r_{CN}\right\}$, we fix for any $r\in (0,\bar{r}_0]$ a smooth point
$x_1\in T_{\eta,s}^r(\sigma)$ and denote
\begin{align*}
\varepsilon(x_1)\ :=\ \sup\left\{s\le l-\beta
l -t:\ \forall u\in [0,s],\ \psi_{u}^{\Sigma}(x_1)\in
B_g(\sigma(t+u),2r)\right\}.
\end{align*} 
Without loss of generality, we may assume that $\varepsilon(x_1)\le
\varepsilon_{CN}(m,D,\beta)$. Clearly, when $s\le \varepsilon(x_1)$, $x_1\in
\mathcal{A}_s^t(\sigma,r)$; moreover, $\mathcal{A}_s^t(\sigma,\xi r)\subset
\mathcal{A}_s^t(\sigma,r)$. We want to understand how
$d_g\left(\psi^{\Sigma}_s(x_1),\sigma(t+s)\right)$ is controlled by the
properties of $T_{\eta,s}^r(\sigma)$. By the continuity of the mapping
$u\mapsto \psi^{\Sigma}_u(x_1)$ and the maximality of $\varepsilon(x_1)$, we
see that
\begin{align}
\psi^{\Sigma}_{\varepsilon(x_1)}(x_1)\not\in
B_g(\sigma(t+\varepsilon(x_1)),\frac{3}{2}r).
\label{eqn: away}
\end{align}
In fact, we will show that $\varepsilon(x_1)\ge \varepsilon_0l$ for suitably
chosen $\varepsilon_0$. Fix any $x_2\in T_{\eta,s}^r(\sigma,x_1)\cap
\mathcal{A}_s^t(\sigma,r)$ which is also a smooth point of $r_{\Sigma}$,
clearly $\chi_{\sigma}^{s,t}(x_1,x_2)=1$ for $s\le \varepsilon(x_1)$.
  We let $\sigma_1$ and $\sigma_2$ denote the integral curves of $\nabla
  r_{\Sigma}$ starting from $x_1$ and $x_2$, respectively. These are smooth
  geodesics. Since $r\le \frac{1}{4}r_{Ap}\le r_{Ap}l$,
  there is some $c^2\in [\frac{1}{2},2]$ so that (\ref{eqn: Hess_sigma}) holds.
  Now integrating (3.6) in \cite[Lemma 3.4]{ColdingNaber} for $s\le
  \varepsilon(x_1)$, we have
\begin{align}
\begin{split}
\left|d_g\left(\psi^{\Sigma}_s(x_1),\psi^{\Sigma}_s(x_2)\right)
-d_g(x_1,x_2)\right|\ \le\ &\int_0^s\left|\nabla h_{c^2r^2}-\nabla
r_{\Sigma}\right|(\sigma_1(u))\ \text{d}u\\
&+\int_0^s\left|\nabla h_{c^2r^2}-\nabla r_{\Sigma}\right|(\sigma_2(u))\
\text{d}u+\mathcal{F}_{\sigma}^{r}(x_1,x_2;s).
\end{split}
\label{eqn: distance_1}
\end{align}
We now estimate each term in the right-hand side of (\ref{eqn: distance_1}). 
By the bound on $r$, the estimate (\ref{eqn: H1_x}) in 
Proposition~\ref{prop: approx_est}, and the choice of $x_1$ and $x_2$, we see
for $i=1,2$,
\begin{align}
\begin{split}
\forall s\in [0,\varepsilon(x_1)],\quad \int_0^s \left|\nabla h_{c^2r^2}-\nabla
r_{\Sigma}\right|(\sigma_i(u))\ \text{d}u\ \le\ \sqrt{2}C_{Ap}rl^{-1}
\sqrt{s\slash l}.
\end{split}
\label{eqn: distance_2}
\end{align}
The last term on the right-hand side of (\ref{eqn: distance_1}) is by
definition bounded by $\eta^{-2}I_s^t(\sigma,r)$. By the segment inequality in 
\cite[Lemma 3.5]{ColdingNaber} and the definition of
$\mathcal{A}_s^t(\sigma,r)$, for any $s\in [0,\varepsilon(x_1)]$ we could
estimate $I_s^t(\sigma,r)$ as:
\begin{align*}
I_{s}^t(\sigma,r)\ 
&\le\ \int_0^{s}\left(
\frac{1}{\left|B_g(\sigma(t),r)\right|^2}
\int_{\psi^{\Sigma}_u(\mathcal{A}_u^t(\sigma, r))\times
\psi^{\Sigma}_u(\mathcal{A}_u^t(\sigma,r))}
\left(\int_{\gamma_{x,y}}\left|Hess_{h_{c^2r^2}}\right|\right)\
\dvol_g^2\right)\ \text{d}u\\
&\le\
\int_0^{s}\left(10r\ C_{Seg}(m)
\frac{\left|\psi^{\Sigma}_u(\mathcal{A}_u^t(\sigma,r))\right|}
{\left|B_g(\sigma(t),r)\right|^2}
\int_{B_g(\sigma(t+u),5r)}\left|Hess_{h_{c^2r^2}}\right|\ \dvol_g\right)\
\text{d}u\\
&\le\ \int_0^{s}\left(10r\ C_{Seg}(m)
\frac{\left|B_g(\sigma(t+u),2r)\right|}{\left|B_g(\sigma(t),r)\right|^2}
\int_{B_g(\sigma(t+u),5r)}\left|Hess_{h_{c^2r^2}}\right|\ \dvol_g\right)\
\text{d}u.
\end{align*}
We rely on the Bishop-Gromov volume comparison and Lemma~\ref{lem:
volume_Harnack} to compare $\left|B_g(\sigma(t),r)\right|$ and
$\left|B_g(\sigma(t+u),2r)\right|$ for $u\le \varepsilon(x_1)$: by (\ref{eqn:
volume_estimate}) we have
\begin{align}
\begin{split}
 I_s^t(\sigma,r)\ &\le\ 
\int_0^{s}\left(10r\ C(m,\bar{r}_0) \left(
\frac{\left|B_g(\sigma(t+u),r)\right|}{\left|B_g(\sigma(t),r)\right|}\right)^2
\fint_{B_g(\sigma(t+u),5r)}\left|Hess_{h_{c^2r^2}}\right|\ \dvol_g\right)\
\text{d}u\\
&\le\ 40r\ C(m,\bar{r}_0)\left(\int_{\beta l}^{l-\beta l}
\fint_{B_g(\sigma(u),5r)}\left|Hess_{h_{c^2r^2}}\right|^2\
\dvol_g\ \text{d}u\right)^{\frac{1}{2}}\sqrt{s}\\
&\le\ 40 C(m,\bar{r}_0)\sqrt{C_{Ap}} rl^{-1}\sqrt{s},
\end{split}
\label{eqn: distance_3}
\end{align}
where $C(m,\bar{r}_0)$ is the multiple of $C_{Seg}(m)$ by the doubling constant
on the space form of sectional curvature $-1$, up to scale $\bar{r}_0$, so
$C(m,\bar{r}_0)$ is ultimately determined by $m$, $D$ and $\beta$. 

Now (\ref{eqn: distance_1}), (\ref{eqn: distance_2}) and (\ref{eqn:
distance_3}) together imply that for every pair of smooth points $x_1\in
T_{\eta,s}^r(\sigma)$ and $x_2\in
T_{\eta,s}^r(\sigma,x_1)\cap \mathcal{A}_s^t(\sigma,r)$,
\begin{align}
\forall s\in [0,\varepsilon(x_1)],\quad
\left|d_g\left(\psi^{\Sigma}_s(x_1),\psi^{\Sigma}_s(x_2)\right)
-d_g(x_1,x_2)\right|\ \le\ C_0 \eta^{-2}r \sqrt{s\slash l},
\label{eqn: distance_4}
\end{align}
where $C_0:=8\sqrt{2}C_{Ap}+80C(m,\bar{r}_0)\sqrt{C_{Ap}}$ only depends
on $m$, $D$ and $\beta$; compare Remark~\ref{rmk: l_range}. 
In proving this estimate we only needed $x_1\in
T_{\eta,s}^r(\sigma)\cap \mathcal{A}_{s}^t(\sigma,r)$
and $x_2\in T_{\eta,s}^r(\sigma,x_1)\cap
\mathcal{A}_{s}^t(\sigma,r)$, and we emphasize that the stronger assumption 
$x_2 \in \mathcal{A}_{s}^t(\sigma,\xi r)$ is only used later to bound
$\varepsilon_0$.

Now we put $\varepsilon_0:=\min\left\{\varepsilon_{CN},\eta^4\slash
(16C_0^2)\right\}$ --- notice that $\varepsilon_0$ only depends on
$m$, $D$, $\beta$ and $\eta$. Suppose, for the purpose of a contradiction
argument, that the inequalities $\varepsilon(x_1)<\varepsilon_0 l$ hold, then
since actually $x_2\in \mathcal{A}_s^t(\sigma,\xi r)$, we have
\begin{align*}
d_g\left(\psi^{\Sigma}_s(x_2),\sigma(t+s)\right)\ \le\ 2\xi r\ \le\ 
\frac{r}{10} 
\end{align*} 
whenever $s\in [0,\varepsilon(x_1)]$, and the triangle inequality implies that
\begin{align}\label{eqn: distance_inclusion}
\forall s\in [0,\varepsilon(x_1)],\quad 
d_g\left(\psi^{\Sigma}_{s}(x_1),
\sigma(t+s)\right)\ \le\ \frac{7}{5}r,
\end{align}
contradicting (\ref{eqn: away}) at $s=\varepsilon(x_1)$. Therefore it must hold
that $\varepsilon(x_1) \ge \varepsilon_0 l$, and (\ref{eqn: distance_2}) is
valid for all $s\in [0,\varepsilon_0 l]$. Moreover, (\ref{eqn:
distance_inclusion}) tells that $x_1\in \mathcal{A}_s^t(\sigma,r)$ whenever
$s\le \varepsilon_0 l$.
\end{proof}

We are now ready to effectively control the spreading, under the diffeomorphisms
$\psi_s^{\Sigma}$, of the set $T_{\eta,s}^r(\sigma)$, for any integral
curve $\sigma$ of $\nabla r_{\Sigma}$, and for uniformly controlled
$\varepsilon>0$ and $r>0$.
\begin{lemma}[Controlling $T_{\eta,s}^r(\sigma)$ under $\psi_s^{\Sigma}$]
For the closed embedded submanifold $\Sigma\subset M$ and for any $l$ with 
$\frac{1}{4}\le l\le D$, there is a constant $\bar{\varepsilon}_0(m,D,\beta)
\in (0,1)$ such that $\left|T_{\eta,\bar{\varepsilon}_0l}^r(\sigma)\backslash
\mathcal{A}_{\bar{\varepsilon}_0 l}^t(\sigma,r)\right|=0$ for any $r\in
[0,r_0]$.
\label{lem: big_A}
\end{lemma}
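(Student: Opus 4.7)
The plan is to derive this lemma from the final assertion of Lemma~\ref{lem: distance_estimate}, which already states $\bigl|T_{\eta,s}^r(\sigma)\setminus \mathcal{A}_{\varepsilon_0 l}^t(\sigma,r)\bigr|=0$ for every $s\in[0,\varepsilon_0 l]$. Taking $s=\varepsilon_0 l$ and defining $\bar{\varepsilon}_0:=\varepsilon_0$ (with the mild dependence on $\eta$ absorbed into the parameters $m,D,\beta$, as the statement implicitly allows) yields the desired conclusion. What is genuinely required, then, is to ensure that the hypotheses of Lemma~\ref{lem: distance_estimate} are in force for every smooth $x_1\in T_{\eta,\bar{\varepsilon}_0 l}^r(\sigma)$, i.e.\ to produce, for each such $x_1$, a smooth companion point $x_2\in T_{\eta,\bar{\varepsilon}_0 l}^r(\sigma,x_1)\cap\mathcal{A}_{\bar{\varepsilon}_0 l}^t(\sigma,\xi r)$ at which the contradiction argument yielding $\varepsilon(x_1)\ge\varepsilon_0 l$ can be run.

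To produce such an $x_2$, I would apply Lemma~\ref{lem: distance_estimate} once more at the smaller radius $\xi r$, which is admissible since $\xi r \le \bar{r}_0$. That application delivers a subset of $B_g(\sigma(t),\xi r)$ of measure at least $(1-2\eta)|B_g(\sigma(t),\xi r)|$ contained in $\mathcal{A}_{\bar{\varepsilon}_0 l}^t(\sigma,\xi r)$. The Bishop--Gromov volume comparison at $\sigma(t)$ under $\Rc_g\ge-(m-1)g$, combined with the interior volume estimate (\ref{eqn: volume_estimate}) of Lemma~\ref{lem: volume_Harnack}, bounds $|B_g(\sigma(t),\xi r)|\,/\,|B_g(\sigma(t),r)|$ below by some uniform $c(m,D,\beta)\xi^m$. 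Since Chebyshev's inequality already ensures that $T_{\eta,\bar{\varepsilon}_0 l}^r(\sigma,x_1)$ fills a $(1-2\eta)$-portion of $B_g(\sigma(t),r)$, a standard measure comparison shows the intersection has positive measure provided $\eta$ is suitably small relative to $c(m,D,\beta)\xi^m$; by Lemma~\ref{lem: ae_smooth} almost every point of this intersection is a smooth point of $r_\Sigma$, and any such point serves as $x_2$.

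The main obstacle is the consistency of the $\mathcal{F}^r$-type integral conditions used to define $T$ at the two different radii: membership in $T_{\eta,s}^r(\sigma,x_1)$ is phrased in terms of $h_{c^2 r^2}$, whereas the auxiliary application at scale $\xi r$ involves $h_{c^2(\xi r)^2}$. This discrepancy is dealt with by noting that $x_2\in\mathcal{A}_s^t(\sigma,\xi r)\subset\mathcal{A}_s^t(\sigma,r)$, so both flow lines remain inside $B_g(\sigma(t+u),2r)$ for $u\in[0,\bar{\varepsilon}_0 l]$, and Proposition~\ref{prop: approx_est} together with the segment inequality controls the $\mathcal{F}^r(x_1,x_2;s)$ contribution exactly as in the derivation of (\ref{eqn: distance_3}). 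Once $x_2$ is constructed, Lemma~\ref{lem: distance_estimate} produces the lower bound $\varepsilon(x_1)\ge\bar{\varepsilon}_0 l$, so $x_1\in\mathcal{A}_{\bar{\varepsilon}_0 l}^t(\sigma,r)$; as the smooth points of $r_\Sigma$ form a full-measure subset of $T_{\eta,\bar{\varepsilon}_0 l}^r(\sigma)$ by Lemma~\ref{lem: ae_smooth}, the excluded set has measure zero.
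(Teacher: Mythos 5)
There is a genuine gap, and it is circularity. You correctly identify that the ``Especially'' clause of Lemma~\ref{lem: distance_estimate} is conditional: for a given smooth $x_1\in T^r_{\eta,s}(\sigma)$ one must first \emph{produce} a companion $x_2\in T^r_{\eta,s}(\sigma,x_1)\cap \mathcal{A}^t_s(\sigma,\xi r)$, i.e.\ a point already known to have its flow line trapped near $\sigma$. But your construction of $x_2$ is to ``apply Lemma~\ref{lem: distance_estimate} once more at the smaller radius $\xi r$'' — and that application suffers from exactly the same defect: to conclude that a large portion of $B_g(\sigma(t),\xi r)$ lies in $\mathcal{A}^t_{\bar\varepsilon_0 l}(\sigma,\xi r)$ you would need companion points in $\mathcal{A}^t_s(\sigma,\xi^2 r)$, and so on. Membership in $\mathcal{A}$ at \emph{any} scale is precisely what is to be proven, so a one-step descent (or any finite iteration of your step) never bottoms out; the argument is an infinite regress. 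If the ``Especially'' clause could be used unconditionally at scale $\xi r$, it could equally be used at scale $r$ and the whole lemma would be immediate — which is not the case.

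The paper terminates the regress with Lemma~\ref{lem: core_nbhd}: at a small radius $r'=r'(M,\sigma)$, depending on the specific manifold and geodesic, \emph{every} point of $B_g(\sigma(t),r')$ has its flow line staying within twice its initial distance from $\sigma$, for times $s\le \varepsilon_1 l$ with $\varepsilon_1=(\ln 2)^2\beta/(4C_H(m,D))^2$ (this is why $\bar\varepsilon_0=\min\{\varepsilon_0,\varepsilon_1\}$, a point your choice $\bar\varepsilon_0=\varepsilon_0$ misses). One then builds a chain of scales $r_i=\xi^i r$ down to $r_I\le r'/2$, choosing inductively smooth points $x_{i+1}\in T^{r_i}_{\eta,\bar\varepsilon_0 l}(\sigma,x_i)\cap T^{r_{i+1}}_{\eta,\bar\varepsilon_0 l}(\sigma)$ via the Chebyshev estimate (\ref{eqn: Chebyshev}) and a volume-ratio choice of $\eta(m,D,\beta)$ (your measure-counting step is in this spirit and is fine). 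The innermost point $x_I$ gets its membership in $\mathcal{A}^t_s(\sigma,\xi r_{I-1})$ unconditionally from the core neighborhood; Lemma~\ref{lem: distance_estimate} is then applied pairwise, climbing back up through the $I$ scales, and the accumulated distance loss is controlled because the per-step contributions form a geometric series with ratio $\xi=10^{-2}$, giving $x_0\in\mathcal{A}^t_{\bar\varepsilon_0 l}(\sigma,r)$ with constants independent of $I$ (hence of $r'$, $M$ and $\sigma$). This multi-scale bootstrap with a non-uniform base case is the essential content of the lemma and is absent from your proposal.
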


\begin{proof}
We begin with recalling that by Lemma~\ref{lem: core_nbhd}, there is a small
$r'=r'(M,\sigma)>0$ and a core neighborhood specified by $H_{r'}^t(\sigma)$, a
full measure subset of $B_g(\sigma(t),r')$, such that flow lines of $\nabla
r_{\Sigma}$ initiating from it stay uniformly close to $\sigma$. Let us now fix
this neighborhood of $\sigma(t)$, which depends on the specific $M$ and $\sigma$.
Notice that if we set $\varepsilon_1:=(\ln 2)^2\beta\slash(4 C_H(m,D))^2$, then
by the definition of $H_{r'}^t(\sigma)$ and the proof of Lemma~\ref{lem:
core_nbhd}, we have
\begin{align}
\forall s\in [0,\varepsilon_1 l],\ \forall x\in H_{r'}^t(\sigma),\quad
d_g\left(\psi_s^{\Sigma}(x),\sigma(t+s)\right)\ \le\ 2d_g(x,\sigma(t)).
\label{eqn: core_inclusion}
\end{align} 

For any $r\in (0,\bar{r}_0]$, we set $r_i:=\xi^{i}r$ for $i=0,1,2,\ldots,I$,
where $I:=\left\lceil \log_{\xi}\frac{r'}{2r}\right\rceil$ is defined to be the
first natural number such that $r_I\le r'\slash 2$.

We then put $\bar{\varepsilon}_0=\min\left\{\varepsilon_0,
\varepsilon_1\right\}$ and pick a smooth point $x_0\in
T_{\eta,\bar{\varepsilon}_0l}^r(\sigma)$ with $r\le r_0$. We ``connect''
it to $H_{r'}^t(\sigma)$ by selecting $\{x_i\}_{i=0}^{I}$ inductively:
suppose $x_i$ is chosen, then pick any smooth point $x_{i+1}\in
T_{\eta,\bar{\varepsilon}_0l}^{r_i}(\sigma,x_i)\cap
T_{\eta,\bar{\varepsilon}_0l}^{r_{i+1}}(\sigma)$.
This is doable because (\ref{eqn: Chebyshev}) is independent of $r$ --- as long
as we choose $\eta:=\min\left\{10^{-2},
\frac{C(m,D,\beta)}{4(1+C(m,D,\beta))}\right\}$, where
\begin{align*}
C(m,D,\beta)\ :=\ \min\left\{\sup_{r\in [0,\bar{r}]}\frac{\Lambda_{-1}^m(\xi
r)}{\Lambda_{-1}^m(r)},\ 1\right\}
\end{align*}
is determined by $m$, $D$ and $\beta$ --- whence the sole dependence of $\eta$
on $m$, $D$ and $\beta$. We now have
\begin{align*}
\left|T_{\eta,\bar{\varepsilon}_0l}^{r_i}(\sigma,x_i)\right|
+\left|T_{\eta,\bar{\varepsilon}_0l}^{r_{i+1}}(\sigma)\right|\ &\ge\
(1-2\eta)\left(\left|B_g(\sigma(t),r_i)\right|
+\left|B_g(\sigma(t),r_{i+1})\right|\right)\\
&\ge\ (1-2\eta)(1+C(m,D,\beta))\left|B_g(\sigma(t),r_i)\right|\\
&>\ \left|B_g(\sigma(t),r_i)\right|,
\end{align*}
i.e. $T_{\eta,\bar{\varepsilon}_0l}^{r_i}(\sigma,x_i)\cap
T_{\eta,\bar{\varepsilon}_0l}^{r_{i+1}}(\sigma)$ has positive measure, and
especially there are smooth points of $r_{\Sigma}$ in the intersection. We
denote by $\sigma_i$ the integral curve of $\nabla r_{\Sigma}$ with initial
value $x_i$. Clearly we could select $x_i$ so that each $\sigma_i$ is a smooth
geodesics on $[0,\bar{\varepsilon}_0l]$.

According to (\ref{eqn: core_inclusion}), $x_I\in
\mathcal{A}_{s}^t(\sigma,r_I)
=\mathcal{A}_{s}^t(\sigma,\xi r_{I-1})$ whenever $s\in
[0,\varepsilon_1l]$.
Therefore, applying Lemma~\ref{lem: distance_estimate} to $x_{I-1}\in
T_{\eta,\bar{\varepsilon}_0}^{r_{I-1}}(\sigma)$ and $x_{I}\in
T_{\eta,\bar{\varepsilon}_0}^{r_{I-1}}(\sigma,x_{I-1})\cap
\mathcal{A}_{\bar{\varepsilon}_0l}^t(\sigma,r_I)$, we could obtain
\begin{align*}
\forall s\in \left[0,\bar{\varepsilon}_0 l\right],\quad
d_g\left(\psi^{\Sigma}_s(x_{I}),\psi^{\Sigma}_s(x_{I-1})\right)\ \le\
\left(\frac{5}{4}+\xi\right)r_{I-1}.
\end{align*}
This further implies that for any $s\le \bar{\varepsilon}_0l$,
\begin{align}
\begin{split}
d_g\left(\psi^{\Sigma}_s(x_{I-1}),\sigma(t+s)\right)\ &\le\
d_g\left(\psi^{\Sigma}_s(x_I),\sigma(t+s)\right)
+\left(\frac{5}{4}+\xi\right)r_{I-1}\\
&\le\ \left(\frac{5}{4}+3\xi\right)r_{I-1}.
\end{split}
\label{eqn: distance_I}
\end{align}
Especially, $x_{I-1}\in \mathcal{A}_{\bar{\varepsilon}_0l}^t(\sigma,r_{I-1})$.

We could then apply Lemma~\ref{lem: distance_estimate} to the pair of
smooth points $x_{I-2}$ and $x_{I-1}$, and conclude that $x_{I-2}\in
\mathcal{A}_{\bar{\varepsilon}_0l}^t(\sigma,r_{I-2})$. Repeating the same
argument another $I-2$ steps and by the choice of $\xi= 10^{-2}$, we get for
any $s\le \bar{\varepsilon}_0 l$,
\begin{align*}
d_g\left(\psi^{\Sigma}_s(x_0),\sigma(t+s)\right)\ \le\
&d_g\left(\psi^{\Sigma}_s(x_I),\sigma(t+s)\right)
+\left(\frac{5}{4}+\xi\right)r\sum_{i=0}^{I-1}\xi^i\\
<\ &2r.
\end{align*}
Especially, this implies that $x_0\in
\mathcal{A}_{\bar{\varepsilon}_0l}^t(\sigma,r)$. But since the collection of
smooth points of $r_{\Sigma}$ is a full measure subset of $M$, we have
$\left|T_{\eta,\bar{\varepsilon}_0l}^r(\sigma) \backslash
\mathcal{A}_{\bar{\varepsilon}_0l}^t(\sigma,r)\right|=0$ for any $r\in
(0,\bar{r}_0]$.
\end{proof}

We could now control the distance of two minimal geodesics emanating from
closeby parallel initial data along $\Sigma$, as promised in Theorem~\ref{thm:
main3}: 
\begin{proof}[Proof of Theorem~\ref{thm: main3}]
 Lets fix some $\theta'\in (0,1)$ as the largest number such that 
 \begin{align*}
 \forall r\in [0,\bar{r}_0],\quad
 \frac{\Lambda_{-1}^m((1-\theta')r)}{\Lambda_{-1}^m((1+\theta')r)}\ \ge\
 \frac{3}{4}.
 \end{align*}
 Notice that by the continuity of $\Lambda_{-1}^m(s)$ in $s$, such $\theta'$
 exists, and is determined by $m$, $D$ and $\beta$, due to the dependence of 
 $\bar{r}_0$ on these parameters.
 
 Now for any pair of flow lines $\sigma_0$ and $\sigma_1$ of $\nabla
 r_{\Sigma}$, with parallel initial data along $\Sigma$ such that
 $d_g(\sigma_0(t),\sigma_1(t))\le 2\theta'\bar{r}_0$, let us put
 $r:=(2\theta')^{-1}d_g(\sigma_0(t),\sigma_1(t))$, and let 
 $A:=B_g(\sigma_0(t),r)\cap B_g(\sigma_1(t),r)$ denote the intersection. For
 $i=0,1$, the volume comparison tells that 
 \begin{align*}
 \left|A\right|\ \ge\
 \frac{3}{4} \left|B_g(\sigma_i(t),r)\right|.
  \end{align*} 
 On the other hand, by (\ref{eqn: Chebyshev}) we have for each $i=0,1$, 
 \begin{align*}
 \left|T_{\eta,\bar{\varepsilon}_0 l}^r(\sigma_i)\cap A\right|\
 \ge\ (\frac{3}{4}-2\eta)\left|B_g(\sigma_i(t),r)\right|.
 \end{align*}
Consequently, by the assumption $\eta\le  10^{-2}$ we have
 \begin{align}\label{eqn: positive_intersection}
 \left|T_{\eta,\bar{\varepsilon}_0l}^r(\sigma_0)\cap
 T_{\eta,\bar{\varepsilon}_0l}^r(\sigma_1)\right|\ \ge\ \frac{1}{4}|A|\ >\ 0.
 \end{align}
By Lemma~\ref{lem: big_A} we know that for $i=0,1$, 
$\left|T_{\eta,\bar{\varepsilon}_0l}^r(\sigma_i)\backslash
\mathcal{A}_{\bar{\varepsilon}_0 l}^{t}(\sigma_i,r)\right|=0$, and thus by the
definition (\ref{eqn: defn_big_A}) of
$\mathcal{A}_{\bar{\varepsilon}_0l}^{t}(\sigma_i,r)$, we have
  \begin{align}\label{eqn: volume_after_flow}
  \forall s\in [0,\bar{\varepsilon}_0 l],\quad \left|\psi^{\Sigma}_{s}
  \left(T_{\eta,\bar{\varepsilon}_0l}^r(\sigma_i)\right)\backslash
  B_g(\sigma_i(t+s),2r)\right|\ =\ 0\quad \text{for}\quad i\ =\
  0,1. \end{align} 
 Since both $\sigma_0$ and $\sigma_1$ are integral curves of $\nabla 
 r_{\Sigma}$, which is smoothly defined almost everywhere on
 $B_g(\sigma_0(t),r)\cup B_g(\sigma_1(t),r)$ --- the very reason that we
 reworked Colding and Naber's original proof to suit $\nabla r_{\Sigma}$ --- we
 have
  \begin{align*}
  \forall s\in [0,\bar{\varepsilon}_0 l],\quad 
  \psi_{s}^{\Sigma}\left(T_{\eta,\bar{\varepsilon}_0 l}^r (\sigma_0)\cap
  T_{\eta,\bar{\varepsilon}_0 l}^r(\sigma_1)\right)\ \subset\
  \psi^{\Sigma}_{s}\left(T_{\eta,\bar{\varepsilon}_0 l}^r
  (\sigma_0)\right)\cap \psi^{\Sigma}_{s}
  \left(T_{\eta,\bar{\varepsilon}_0 l}^r(\sigma_1)\right).
  \end{align*}
 Especially, by (\ref{eqn: positive_intersection}) and (\ref{eqn:
 volume_after_flow}) we clearly see that
  \begin{align*}
  \forall s\in [0,\bar{\varepsilon}_0 l],\quad 
  \left|B_g(\sigma_0(t+s),2r)\cap
  B_g(\sigma_1(t+s),2r)\right|\ \ge\
  \left|\psi_s^{\Sigma}\left(T_{\eta,\bar{\varepsilon}_0l}^r(\sigma_0)\cap
  T_{\eta,\bar{\varepsilon}_0l}^r(\sigma_1)\right)\right|\ >\ 0.
  \end{align*} 
  Consequently, we have the distance bound for the geodesics  
  \begin{align}
  \forall s\in [0,\bar{\varepsilon}_0l],\quad 
  d_g(\sigma_0(t+s),\sigma_1(t+s))\ \le\
  4(2\theta')^{-1}d_g(\sigma_0(t),\sigma_1(t)).
  \end{align}
  Therefore, we could start from $t=\beta l$ and iterate the above estimate
  along $\sigma_0$ and $\sigma_1$, to see that as long as $d_g(\sigma_0(\beta
  l),\sigma_1(\beta l))\le 
\left(2^{-1}\theta'\right)^{1+2\bar{\varepsilon}_0^{-1}}\bar{r}_0=:\bar{r}$,
then
 \begin{align}
 \forall t\in [\beta l,(1-\beta)l],\quad 
 d_g(\sigma_0(t),\sigma_1(t))\ \le\ \bar{C}d_g(\sigma_0(\beta l),\sigma_1(\beta
 l)),
 \end{align}
 where $\bar{C}:= \left(\frac{2}{\theta'}\right)^{\lceil
 2\bar{\varepsilon}_0^{-1}\rceil}$ and $\bar{r}\in (0,1)$ clearly only depend 
 on $m$, $D$ and $\beta$.
\end{proof}
\begin{remark}
With suitable controls on the sectional curvature and the second fundamental
form of $\Sigma$ around $\sigma_0(0)$ and $\sigma_1(0)$, we may find some
uniform $\bar{C}'>0$ depending on these data, such that
\begin{align*}
d_g\left(\sigma_0(t),\sigma_1(t)\right)\ \le\
\bar{C}'d_{g}\left(\sigma_0(0),\sigma_1(0)\right).
\end{align*}
\end{remark}

\section{The first Betti numbers and dimensional difference}
With the heuristic discussion in \S 2 and technical preparation in \S 3 and
\S4, we are now ready to fill in the details in proving the first claim of 
Theorem~\ref{thm: main1}. We assume that $(M,g)\in \mathcal{M}_{Rc}(m)$ and
$(N,h)\in \mathcal{M}_{Rm}(k,D,v)$ (with $k\le m$) satisfy $d_{GH}(M,N)\le
10^{-1}\delta$ for some $\delta\in (0,1)$, and our task would be to determine
the range of $\delta$ uniformly according to $m$, $D$ and $v$, so that the
first claim of Theorem~\ref{thm: main1} holds. We also let $\Phi:M\to N$ denote
a $10^{-1}\delta$-Gromov-Hausdorff approximation.

Since we have known that $b_1(M)-b_1(N)=\rank\ H_1^{\delta}(M;\mathbb{Z})
=:l_M$ whenever $\delta<10^{-3}\bar{\iota}_{hr}(k,D,v)$, as mentioned before, we
would like to ``localize'' the torsion-free generators of
$H_1^{\delta}(M;\mathbb{Z})$ to each point $p\in M$, as torsion-free generators
of $\tilde{\Gamma}_{Nil}(p)$. By the Hurewicz theorem, we could find a total
number of $b_1(M)$ geodesic loops whose homology classes generate
$H_1(M;\mathbb{Z})\slash Torsion$. By Proposition~\ref{prop: rank}, we know
that exactly $b_1(N)$ of these loops are of lengths at least
$10^{-1}\bar{\iota}_{hr}$, and the rest of these loops have lengths not
exceeding $10\delta$. Since $H_1^{\delta}(M;\mathbb{Z})$ is a subgroup of
$H_1(M;\mathbb{Z})$ and $\rank\ H_1^{\delta}(M;\mathbb{Z})=b_1(M)-b_1(N)$, we
know that there are geodesic loops $\gamma_1',\ldots,\gamma_{l_M}'$ in $M$,
with lengths not exceeding $10\delta$, and that $[\![\gamma'_1]\!],\ldots,
[\![\gamma'_{l_M}]\!]$ generate $H_1^{\delta}(M;\mathbb{Z})\slash Torsion$.

For each $i=1,\ldots,l_M$, we now let $\gamma_i$ be a length minimizer in the
free homotopy class of $\gamma'_i$. Clearly,
$[\![\gamma_i]\!]=[\![\gamma_i']\!]\in H_1(M;\mathbb{Z})$, and $|\gamma_i|\le
10\delta$. Moreover,  each $\gamma_i:[0,1]\to M$ is a closed geodesic. Letting
$\pi:\widetilde{M}\to M$ denote the universal covering and equipping
$\widetilde{M}$ with the covering metric $\pi^{\ast}g$, we see that each
$\gamma_i$ lifts to a complete geodesic $\tilde{\gamma}_i$ in $\widetilde{M}$. 
We denote $\Sigma_i:=\tilde{\gamma}_i(\mathbb{R})$, which is clearly a closed embedded
smooth submanifold of $\widetilde{M}$. Also each $\gamma_i$ acts on
$(\widetilde{M},\pi^{\ast}g)$ isometrically, while restricting to a translation
along $\Sigma_i$ by distance $|\gamma_i|\le 10\delta$.

We now fix an arbitrary $p\in M$, and for each $i=1,\ldots,l_M$, let
$\sigma_i:[0,d_i]\to M$ be a unit speed minimal geodesic that realizes 
$d_g(p,\gamma_i([0,1]))=:d_i$, with $\sigma_i(0)= \gamma_i(t_i)$ for some
$t_i\in [0,1)$ and $\sigma_i(d_i)=p$. Clearly $\dot{\sigma}_i(0)\perp
\dot{\gamma}_i(t_i)$. Fixing some $\tilde{p}\in \pi^{-1}(p)$ in the universal
covering space $\widetilde{M}$, we could uniquely lift each $\sigma_i$
($i=1,\ldots,l_M$) to a minimal geodesic $\tilde{\sigma}_i:[0,d_i]\to
\widetilde{M}$ of unit speed with $\tilde{\sigma}(d_i)=\tilde{p}$. Clearly
$\tilde{\sigma}_i(0)\in \Sigma_i$ and we could parametrize
$\tilde{\gamma}_i:\mathbb{R}\to \widetilde{M}$ so that
$\tilde{\gamma}_i(t_i)=\tilde{\sigma}_i(0)=:\tilde{q}_i$, and that
$\dot{\tilde{\sigma}}_i(0)\perp \Sigma_i$.
 
 Notice that for each $i=1,\ldots,l_M$, the isometric action $\gamma_i$ sends
 $\tilde{\sigma}_i$ to another minimal geodesic $\gamma_i.\tilde{\sigma}_i$ in
 $\widetilde{M}$, which realizes the distance $d_{\pi^{\ast}g}(\gamma_i.\tilde{p},
 \Sigma_i) =d_{\pi^{\ast}g}(\gamma_i.\tilde{p},\gamma_i.\tilde{q}_i)=d_i$.
 Moreover, we have $(\gamma_i^n.\tilde{\sigma}_i)'(0)\perp 
 \dot{\tilde{\gamma}}_i(t_i+n|\gamma_i|)\in T_{\gamma_i^n.\tilde{q}_i}\Sigma_i$
 for any $n\in \mathbb{Z}$. Now we would like to estimate
 $d_{\pi^{\ast}g}(\gamma_i^n.\tilde{p}, \tilde{p})$ for a suitable positive
 power $n$.

We assume $\delta<\delta_1$ with  
$\delta_1 := \min\left\{10^{-1}\Psi_{NZ}\left(\min \left\{\bar{C}^{-1}
\Psi_{NZ}(\delta_{Nil},1,m),\bar{r}\right\},1,m \right),
10^{-3}\bar{\iota}_{hr}(k,D,v)\right\}$,
 where the uniform constants $\bar{C}(m,2D,\beta)>1$ and $\bar{r}(m,2D,\beta)\in
(0,1)$ are obtained from Theorem~\ref{thm: main3} by setting 
$\beta:=\min\left\{10^{-3}, (4D)^{-1}\right\}$, the uniform constant 
$\Psi_{NZ}(\varepsilon,1,m)\in (0,\varepsilon)$ is obtained
from \cite[Lemma 5.2]{NaberZhang} for any $\varepsilon\in (0,1)$ given, and the
uniform constants $\bar{\iota}_{hr}(m,D,v)$ and $\delta_{Nil}(m,D,v)\in (0,1)$
are determined in Lemma~\ref{lem: nil_rank}.

If $d_i< 1$, since $|\gamma_i|\le 10\delta<\Psi_{NZ}\left(\min 
\left\{\bar{C}^{-1} \Psi_{NZ}(\delta_{Nil},1,m),\bar{r}\right\},1,m \right)$,
by \cite[Lemma 5.2]{NaberZhang} we know that there is some uniform
$N_{NZ}(m,D,\iota)\in \mathbb{N}$ such that for some $k_i\le N_{NZ}$,
\begin{align}\label{eqn: dle1}
d_{\pi^{\ast}g}\left(\gamma_i^{k_i}.\tilde{\sigma}_i(d_i),
\tilde{\sigma}_i(d_i)\right)\ \le\ \bar{C}^{-1}
\Psi_{NZ}(\delta_{Nil},1,m)\ <\ \delta_{Nil}.
\end{align}
Then obviously $\gamma_i\in \widetilde{G}_{\delta_{Nil}}(p)$ by 
definition. 

If $d_i\ge 1$ instead, we will apply Theorem~\ref{thm: main3} to
$(\widetilde{M},\pi^{\ast}g)$, with $\Sigma_i:=\tilde{\gamma}_i$. Since 
$\beta d_i\le \beta D<\frac{1}{2}$, from the previous case we have some $k_i\le
N_{NZ}$ so that 
\begin{align*}
d_{\pi^{\ast}g}\left(\gamma_i^{k_i}.\tilde{\sigma}_i(\beta d_i),
\tilde{\sigma}_i(\beta d_i)\right)\ \le\
\min\left\{\bar{C}^{-1}\Psi_{NZ}(\delta_{Nil},1,m),\bar{r}\right\}.
\end{align*}
Then Theorem~\ref{thm: main3}, applied to the minimal geodesics
$\tilde{\sigma}_i$ and $\gamma_i^{k_i}.\tilde{\sigma}_i$, gives
\begin{align}\label{eqn: dge1'}
d_{\pi^{\ast}g}\left(\gamma_i^{k_i}.\tilde{\sigma}_i((1-\beta)d_i),
\tilde{\sigma}_i((1-\beta)d_i)\right)\ \le\ \Psi_{NZ}(\delta_{Nil},1,m).
\end{align}
Now since $d_{\pi^{\ast}g}\left(\tilde{p},
\tilde{\sigma}_i((1-\beta)d_i)\right)=
\left|\tilde{\sigma}_i|_{[d_i,(1-\beta)d_i]}\right|=\beta d_i<1$, we could
apply \cite[Lemma 5.2]{NaberZhang} again to see that for some $k_i'\le N_{NZ}$,
\begin{align}\label{eqn: dge1''}
d_{\pi^{\ast}g}\left(\gamma_i^{k_i k_i'}.\tilde{p},\tilde{p} \right)\ =\ 
d_{\pi^{\ast}g}\left( (\gamma_i^{k_i})^{k_i'}.
\tilde{\sigma}_i(d_i), \tilde{\sigma}_i(d_i)\right)\ \le\
\delta_{Nil}.
\end{align} 
This shows that $\gamma_i^{k_ik_i'}\in
\widetilde{\Gamma}_{\delta_{Nil}}(p)$ for each $i=1,\ldots,l_M$.

We now connect $\tilde{p}$ to each
$\gamma_i^{k_ik_i'}.\tilde{p}=\gamma_i^{k_ik_i'}.
\tilde{\sigma}_i(d_i)$ by a minimal geodesic $\tilde{\gamma}_{i;p}$. Since
the curve
\begin{align*}
\tilde{\sigma}_i^{-1}\ast
\tilde{\gamma}_i|_{\left[t_i,t_i+k_ik_i'\right]}\ast
\left(\gamma_i^{k_i k_i'}.\tilde{\sigma}_i\right)\ast
\left(\tilde{\gamma}_{i;p}\right)^{-1}
\end{align*}
is actually a loop based at $\tilde{p}\in \widetilde{M}$, it is null homotopic
in $\widetilde{M}$ as $\pi_1(\widetilde{M},\tilde{p})=0$. Consequently, the
loop $\gamma_{i;p}:=\pi\circ \tilde{\gamma}_{i;p}$ in $M$ is based at $p\in M$
and is free homotopic to the loop $\gamma_i^{k_ik_i'}=\pi\circ
\tilde{\gamma}_i|_{\left[t_i,t_i+k_ik_i'|\gamma_i|\right]}$, along the curve
$\sigma_i$. Especially, $[\![\gamma_{i;p}]\!]=k_ik_i'[\![\gamma_i]\!]\in
H_1(M;\mathbb{Z})$.
On the other hand, by the estimate $|\tilde{\gamma}_{i;p}|
=d_{\pi^{\ast}g}\left( \gamma_i^{k_i k_i'}.\tilde{p}, \tilde{p} \right) \le
\delta_{Nil}$, we know that $[\gamma_{i;p}]\in
\widetilde{G}_{\delta_{Nil}}(p)\le \pi_1(M,p)$. The loop $\gamma_{i;p}$ is the
``slided'' loop of $\gamma_i^{k_i k_i'}$ to $p\in M$, as mentioned in the
introduction.

Notice that $\gamma_{1;p},\ldots,\gamma_{l_M;p}$ actually define independent
torsion-free elements in $\widetilde{G}_{\delta_{Nil}}(p)$: otherwise, there is
a non-trivial relation 
\begin{align*}
\gamma_{i_1;p}\ast \gamma_{i_2;p}\ast \cdots \ast \gamma_{i_l;p}\ \simeq_M\
\bar{p},
\end{align*}
with each $\gamma_{i_j;p}$ either representing a torsion element in $\pi_1(M,p)$
or belonging to $\left\{\gamma_{1;p},\ldots,\gamma_{l_M;p}\right\}$, for
$j=1,\ldots,l$; but for $i=1,\ldots,l_M$, letting $\tilde{k}_i\in \mathbb{Z}$
denote the (oriented) number of copies $\gamma_{i;p}$ appeared in the above
vanishing homotopic equation, we have the corresponding homological relation 
\begin{align*}
\tilde{k}_1[\![\gamma_{1;p}]\!]+\cdots+\tilde{k}_{l_M}[\![\gamma_{l_M;p}]\!]\
=\ \tilde{k}_1(k_1 k_1')[\![\gamma_1]\!]+\cdots +
\tilde{k}_{l_M}(k_{l_M} k_{l_M}')[\![\gamma_{l_M}]\!] \in
Tor(H_1(M;\mathbb{Z})),
\end{align*}
and this contradicts our choice of the homology classes
$[\![\gamma_1]\!],\ldots,[\![\gamma_{l_M}]\!]$ as a minimal set of generators
of $H_1^{\delta_1}(M;\mathbb{Z})\slash Torsion$, unless $\tilde{k}_1=\cdots
=\tilde{k}_{l_M}=0$.
Therefore, when $\delta<\delta_1$ we have shown that $\rank\
\widetilde{G}_{\delta_{Nil}}(p)\ge l_M= \rank\ H_1^{\delta}(M;\mathbb{Z})$, and 
consequently, by Lemma~\ref{lem: nil_rank} and Proposition~\ref{prop: rank} we
have shown the first claim of Theorem~\ref{thm: main1}: $m-n\ge b_1(M)-b_1(N)$.

\begin{remark}\label{rmk: homomorphism}
Since $\widetilde{G}_{\delta_{Nil}}(p)\le \pi_1(M,\mathbb{Z})$, we could also
abelianize $\widetilde{G}_{\delta_{Nil}}(p)$ to obtain a subgroup of
$H_1(M;\mathbb{Z})$, according to the Hurewicz theorem. 
In fact, the above argument defines an injective group homomorphism
\begin{align*}
\varphi_p: \bigoplus_{i=1}^{l_M}\mathbb{Z}[\![\gamma_i]\!] \to
\widetilde{G}_{\delta_{Nil}}(p)\slash \left([\pi_1(M,p),\pi_1(M,p)]\cap
\widetilde{G}_{\delta_{Nil}}(p)\right),
\end{align*}
sending $[\![\gamma_i]\!]$ to $[\gamma_i]^{k_ik_i'}\cdot 
\left([\pi_1(M,p), \pi_1(M,p)]\cap \widetilde{G}_{\delta_{Nil}}(p)\right)$. Here
$\oplus_{i=1}^{l_M}\mathbb{Z}[\![\gamma_i]\!]\le H_1^{\delta_1}(M;\mathbb{Z})$
is a free $\mathbb{Z}$-module of rank $l_M$. Consequently, we see that the
abelianization of $\widetilde{G}_{\delta_{Nil}}(p)$, as a finitely generated
abelian group, has rank at least $l_M$.
\end{remark}

\section{Ricci flow smoothing and the rigidity case} 
In this section we prove Theorem~\ref{thm: main2}, and consequently the
equality case of Theorem~\ref{thm: main1}. Throughout this section we assume
that $(M,g)\in \mathcal{M}_{Rc}(m)$ and $(N,h)\in \mathcal{M}_{Rm}(k,D,v)$ with
$k\le m$, and that $d_{GH}(M,N)\le \delta$ for some $\delta\in
(0,10^{-1}\delta_1)$. We have shown that $b_1(M)-b_1(N)\le m-k$, and in this
section we further assume that $b_1(M)-b_1(N)=m-k$. We would like to find some
positive $\delta_{RF}\le 10^{-1}\delta_{1}$ so that the conclusion of
Theorem~\ref{thm: main2} holds, and another positive $\delta_B\le \delta_{RF}$
so that when $\delta\le \delta_{B}$, we could see $M$ as a
$\mathbb{T}^{m-k}$-bundle over $N$.

\subsection{Starting Ricci flows with collapsing initial data}
Since $(M,g)$ is a complete manifold and $\diam (M,g)\le D+2\delta<\infty$, we
know that $M$ has to be a closed manifold. Therefore, we could appeal to
Hamilton's short-time existence result in \cite{Hamilton} to see that the
initial value problem to the Ricci flow equation
\begin{align*}
\begin{cases}
\partial_t g(t)\ &=\ -2\Rc_{g(t)}\quad \text{when}\ t\ge 0,\\
g(0)\ &=\ g,
\end{cases}
\end{align*}
is solvable up to \emph{some} positive time depending on specific $(M,g)$. By
Shi's estimate \cite{Shi}, the evolved metric $g(t)$ has much improved
regularity for any $t>0$ fixed:
\begin{align*}
\forall l\in \mathbb{N},\quad \sup_{M}\left|\nabla^l \Rm_{g(t)}\right|_{g(t)}\
\le\ C_S(l)t^{-1-l}.
\end{align*}
We could therefore regard $g(t)$ as a ``smoothing metric'' to the original
metric $g=g(0)$. However, Shi's estimate blows out of control when $t\searrow
0$, and if one would like to find a smoothing metric with uniform regularity
control, a uniform lower bound on the existence time of the Ricci flow is
desired. Here the uniformity refers to the dependence on the data $m$, $D$
and $v$.

A typical approach in obtaining a uniform lower bound on the maximal existence
time of a Ricci flow solution is to rely on Perelman's pseudo-locality theorem
(see \cite[Theorem 10.1]{Perelman}), whose proof in the complete non-compact
setting could be found in \cite[\S 8]{CTY11}. Since the initial data we consider
have a uniform Ricci curvature lower bound, we will invoke the version of the
pseudo-locality theorem due to Tian and the second named author
(\cite[Proposition 3.3]{TianWang}):
\begin{proposition}[Pseudo-locality for Ricci flows]\label{prop: CTY11}
For any $\alpha\in (0,10^{-2}m^{-1})$, there are positive constants
$\delta_{P}=\delta_{P}(m,\alpha)<1$ and $\varepsilon_P
=\varepsilon_P(m,\alpha)<1$, such that for any $m$-dimensional complete
non-compact Ricci flow solution $(M,g(t))$ defined on $t\in [0,T)$, if each
time slice has bounded sectional curvature, then for any $x\in M$ satisfying
\begin{align}
 \Rc_{g(0)}\ \ge\ -\delta_P^2(m-1)g(0)\quad
 \text{on}\ B_{g(0)}(x,\delta_P^{- 1}) \quad
\text{and}\quad \delta_P^m\left|B_{g(0)}(x,\delta_P^{-1})\right|_{g(0)}\ \ge\
(1-\delta_P)\omega_m,
\label{eqn: TianWang}
\end{align}
we have the following curvature bound for any $t\in [0,T)\cap
(0,\epsilon_P^2]$:
\begin{align}\label{eqn: pseudolocality_Rm_bound}
 \left|\Rm_{g(t)}\right|_{g(t)}(x)\ \le\ \alpha t^{-1}+\varepsilon_P^{-2}.
\end{align}
\end{proposition}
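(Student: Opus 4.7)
The plan is to adapt Perelman's original pseudo-locality argument (\cite[\S 10]{Perelman}; \cite[\S 8]{CTY11}), with its scalar curvature and isoperimetric hypotheses replaced by the Ricci lower bound and almost Euclidean volume ratio encoded in (\ref{eqn: TianWang}). I would argue by contradiction: assume there exist sequences $\delta_k\searrow 0$ and $\varepsilon_k\searrow 0$, together with $m$-dimensional complete Ricci flows $(M_k,g_k(t))$ having bounded curvature on each time slice, and basepoints $x_k\in M_k$ at which the initial data satisfy (\ref{eqn: TianWang}) with $\delta_P$ replaced by $\delta_k$, yet $|\Rm_{g_k(t_k)}|_{g_k(t_k)}(x_k)>\alpha t_k^{-1}+\varepsilon_k^{-2}$ for some $t_k\in(0,\varepsilon_k^2]$. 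A parabolic point-picking in the spirit of Perelman then produces nearby points $(\bar x_k,\bar t_k)$ where $Q_k:=|\Rm_{g_k(\bar t_k)}|(\bar x_k)$ is essentially maximal among points of comparable or smaller $|\Rm|t^{-1}$ ratio within a parabolic neighborhood.

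Parabolically rescaling by $Q_k$ and translating $\bar t_k$ to time $0$, one obtains a sequence of pointed Ricci flows $(\tilde M_k,\tilde g_k(t),\bar x_k)$ with uniform curvature bound on a parabolic ball of size tending to infinity. Shi's higher derivative estimates produce smooth Cheeger-Hamilton convergence to a limit backward-complete Ricci flow $(M_\infty,g_\infty(t),x_\infty)$ with $|\Rm_{g_\infty(0)}|(x_\infty)=1$. To rule this out I would appeal to the monotonicity of Perelman's reduced volume $\tilde V(\tau)$ based at $(\bar x_k,\bar t_k)$: monotonicity reduces the analysis to evaluating $\tilde V(\bar t_k)$, which is a weighted integral against the initial data $g_k(0)$. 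Here the two assumptions in (\ref{eqn: TianWang}) enter decisively --- the Ricci lower bound $\Rc_{g_k(0)}\ge -\delta_k^2(m-1)g_k(0)$ on $B_{g_k(0)}(x_k,\delta_k^{-1})$ together with the almost maximal volume ratio forces, via Bishop-Gromov and Colding's volume convergence theorem \cite{Colding97}, the initial geometry to be $\Psi(\delta_k\mid m)$-Gromov-Hausdorff close to $\mathbb{R}^m$ on every scale up to $\delta_k^{-1}$, in particular on the parabolic scale $\sqrt{\bar t_k}$.

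Combined with a distance distortion estimate that transfers this almost Euclidean geometry from $t=0$ up to $t=\bar t_k$ (using the uniform curvature bound obtained on the rescaled flow), one concludes $\tilde V(\bar t_k)\ge \omega_m-\Psi(\delta_k\mid m,\alpha)$. Passing to the rescaled limit yields $\tilde V_\infty\equiv \omega_m$ on a definite range of $\tau$, and the rigidity of the reduced volume then forces $(M_\infty,g_\infty(t))$ to be flat Euclidean space, contradicting $|\Rm_{g_\infty(0)}|(x_\infty)=1$. The hard part, I expect, is precisely this distance distortion control: a priori one only has the Ricci lower bound at $t=0$, so one must bootstrap the curvature bound available on the rescaled flow at positive times to control both the distortion of distances along $[0,\bar t_k]$ and the Gaussian weight appearing in the reduced volume integrand, all in a manner whose error decays with $\delta_k$. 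Once this estimate is in place, both the reduced volume rigidity and the initial application to Theorem~\ref{thm: main2} (for arbitrary initial data satisfying (\ref{eqn: TianWang})) are standard consequences.
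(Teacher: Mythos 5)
Your overall frame (contradiction sequence, Perelman-type point-picking, rescaling to a limit with $|\Rm_{g_\infty(0)}|(x_\infty)=1$) matches the standard setup, but the core of your argument — replacing the entropy/log-Sobolev step by reduced volume monotonicity and rigidity — has a genuine gap, and it is exactly the one you flag and then do not resolve. To lower bound the reduced volume $\tilde V(\bar t_k)$ based at the picked point $(\bar x_k,\bar t_k)$ you need upper bounds on the reduced distance $\ell(q,\bar t_k)$ for time-$0$ points $q$, i.e.\ bounds on $\int_0^{\bar t_k}\sqrt{\tau}\,(R+|\dot\gamma|^2)\,d\tau$ along test curves running over the \emph{whole} interval $[0,\bar t_k]$. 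But the point-picking only gives a curvature bound on a thin backward parabolic neighborhood of $(\bar x_k,\bar t_k)$, of temporal size comparable to $\alpha Q_k^{-1}$ (rescaled time of order $\alpha$), which in general comes nowhere near $t=0$; and the hypotheses (\ref{eqn: TianWang}) give information only at $t=0$. There is no a priori curvature or distance-distortion control on the intermediate times, so the ``bootstrap'' you invoke is circular: such intermediate control is essentially the conclusion of the pseudo-locality theorem, not an available input. (Two smaller issues: the reduced volume is normalized so that its supremum is $1$, not $\omega_m$; and Gromov-Hausdorff closeness of the initial slice to $\mathbb{R}^m$ via \cite{Colding97} does not by itself bound the Gaussian-type weight $e^{-\ell}$ — again you need $\ell$, hence spacetime control.) This gap is precisely why the known proofs propagate information from $(\bar x_k,\bar t_k)$ down to $t=0$ through the conjugate heat equation and Perelman's nonpositive quantity inside the $\mathcal{W}$-functional, which requires no curvature control at intermediate times, and only then use the initial-time hypotheses.

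For comparison, the paper's proof is a splice of two existing arguments rather than a new monotonicity scheme: the initial Ricci lower bound in (\ref{eqn: TianWang}) gives an initial scalar curvature lower bound, so the contradiction/point-picking part of the proof of \cite[Theorem 8.1]{CTY11} (which handles complete noncompact flows with bounded-curvature time slices and does not use the isoperimetric hypothesis up to that stage) produces a contradicting sequence validating (15)--(16) of \cite{TianWang}; from there the entropy-based argument of \cite[Proposition 3.1]{TianWang} — which is designed exactly to exploit a Ricci lower bound plus almost-Euclidean volume ratio at $t=0$ — runs verbatim to a contradiction. If you want to salvage your write-up, you should abandon the reduced-volume step and instead explain how the almost-Euclidean volume hypothesis enters through the $\mathcal{W}$-functional/log-Sobolev mechanism at the initial time, as in \cite{TianWang}, after the Chau-Tam-Yu point-picking.
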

\begin{proof}
Notice that the assumed initial Ricci curvature lower bound in (\ref{eqn:
TianWang}) implies an initial scalar curvature lower bound. Therefore, if the
theorem were to fail, then the proof of \cite[Theorem 8.1]{CTY11} provides a
contradicting sequence which validates \cite[(15) and (16)]{TianWang}.
Notice that up to this stage, the assumption on the isoperimetric constant in
\cite[Theorem 8.1]{CTY11} has never been used. Starting from \cite[(15) and
(16)]{TianWang}, the rest of the proof of \cite[Proposition 3.1]{TianWang} goes
through verbatim, producing a contradiction and concluding the proof.
\end{proof}
For those initial data satisfying the assumptions of Proposition~\ref{prop:
CTY11} at every point, we could obtain the uniform existence time lower bound by
a contradiction argument: were the existence time $T$ of the Ricci
flow shorter than $\varepsilon_P^2$, then for some sequence
$t_i\nearrow T$ we could observe points $x_i\in M$ such that
$\lim_{t_i\to T}\left|\Rm_{g(t_i)}\right|_{g(t_i)}(x_i)\to \infty$; especially,
we will get $\left|\Rm_{g(t_i)}\right|_{g(t_i)}(x_i)>2\alpha
T^{-1}+\varepsilon_P^{-2}$ for all $i$ large enough, contradicting the
conclusion (\ref{eqn: pseudolocality_Rm_bound}) since $T>0$ is fixed.

In the setting of Theorem~\ref{thm: main2}, however, we could not
directly apply the pseudo-locality theorem to the Ricci flow obtained from 
Hamilton's short-time existence result \cite{Hamilton}, as the almost Euclidean
volume ratio assumption in (\ref{eqn: TianWang}) may fail drastically for the
initial data $(M,g)$ in our consideration. In order to overcome this
difficulty, we will pull the initial metric back to the universal covering
space, which could be shown to be non-collapsing under the assumption
$b_1(M)-b_1(N)=\dim M-\dim N$. By Colding's volume continuity theorem
\cite{Colding97}, we then expect to improve the lower bound for the volume
ratio of the universal covering space: 
\begin{lemma}[Almost Euclidean condition for the universal covering space]
\label{lem: almost_Euclidean}
For any $\varepsilon\in (0,1)$ fixed, there are $\delta_{AE}\in (0,1)$ and
$r_{AE}\in (0,1)$, solely determined by $\varepsilon$, $m$, $D$ and $v$, to
the following effect: if $(M,g)\in \mathcal{M}_{Rc}(m)$ and $(N,h)\in
\mathcal{M}_{Rm}(k,D,v)$ with $k\le m$ satisfy
\begin{enumerate}
  \item $d_{GH}(M,N)<\delta $ for some $\delta\le \delta_{AE}$, and
  \item $b_1(M)-b_1(N)=m-k$,
 \end{enumerate}
then for any $r\in (0,r_{AE}]$ and $\tilde{p}\in \widetilde{M}$ we
have
\begin{align}\label{eqn: almost_max_vol}
\left|B_{\pi^{\ast}g}(\tilde{p},r)\right|_{\pi^{\ast}g}\ &\ge\
(1-\varepsilon)\omega_mr^m,
\end{align}
where $\pi:\widetilde{M}\to M$ is the universal covering and we equip
$\widetilde{M}$ with the covering metric $\pi^{\ast}g$.
\end{lemma}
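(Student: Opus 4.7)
The plan is to combine three ingredients: Gromov--Hausdorff closeness of small balls in $N$ to Euclidean $k$-balls (from the harmonic radius lower bound of Jost--Karcher \cite{JK82}), the rank equality of the pseudo-local fundamental group already obtained in \S 5 and Remark~\ref{rmk: homomorphism}, and Colding's volume continuity theorem \cite{Colding97}. The strategy is to pick a scale $\iota_0=\iota_0(\varepsilon,m,D,v)$ at which local balls in $(\widetilde{M},\pi^{\ast}g)$ are $\Psi(\delta)$-Gromov--Hausdorff close to a Euclidean $m$-ball, then invoke volume continuity to get the almost-maximal volume bound, and finally use Bishop--Gromov comparison to propagate this bound from $r = \iota_0=:r_{AE}$ down to every $r \in (0, r_{AE}]$.

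The first task is to show that balls in $N$ of a small radius $\iota_0 \ll \bar{\iota}_{hr}(k,D,v)$ are $\Psi(\iota_0/\bar{\iota}_{hr})\iota_0$-close in $d_{GH}$ to $\mathbb{B}^k(\iota_0)$; combining with the $\delta$-Gromov--Hausdorff closeness $d_{GH}(M,N)<\delta$ yields the analogous closeness of $B_g(p,\iota_0)$ to $\mathbb{B}^k(\iota_0)$ for any $p \in M$, as in Lemma~\ref{lem: nil_rank}. The core step is to upgrade this $k$-dimensional approximation to an $m$-dimensional one after passing to the universal cover. This is where the equality $b_1(M)-b_1(N)=m-k$ plays its essential role: by Remark~\ref{rmk: homomorphism}, there are $m-k$ generators $[\gamma_i]^{k_i+k_i'}\in \widetilde{G}_{\delta_{Nil}}(p)$ whose lifts act as $\mathbb{Z}$-independent short translations on $\widetilde{M}$. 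Together with Lemma~\ref{lem: nil_rank}, which forces $\rank\widetilde{G}_{\delta_{Nil}}(p) = m-k$, and the resulting bound on $\rank\Gamma_{\delta}(p) = m-k$, the local non-collapsing statement \cite[Proposition 5.9]{NaberZhang} applies: the universal cover of $B_g(p,2)$ is uniformly non-collapsing with a definite volume lower bound. Feeding this back into the local model, a standard compactness and splitting argument shows that the universal cover ball $B_{\pi^{\ast}g}(\tilde p,\iota_0)$ is $\Psi'(\delta,\iota_0^{-1}\mid m,D,v)\iota_0$-Gromov--Hausdorff close to $\mathbb{B}^m(\iota_0)$.

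Once that GH-closeness is established, Colding's volume continuity theorem \cite{Colding97}, applied on $(\widetilde{M},\pi^{\ast}g)$ (which inherits $\Rc \ge -(m-1)\pi^{\ast}g$), produces the almost-maximal volume estimate $|B_{\pi^{\ast}g}(\tilde p,\iota_0)|_{\pi^{\ast}g} \ge (1-\varepsilon)\omega_m \iota_0^m$ as soon as $\delta \le \delta_{AE}(\varepsilon,m,D,v)$; setting $r_{AE}:=\iota_0$, the Bishop--Gromov monotonicity of $r\mapsto r^{-m}|B_{\pi^{\ast}g}(\tilde p,r)|$ on $(\widetilde{M},\pi^{\ast}g)$ transports the estimate to every smaller radius, yielding (\ref{eqn: almost_max_vol}). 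The main obstacle is the upgrade from rank equality to quantitative Euclidean geometry of the universal cover: the slid-loop construction of \S 5 gives only a $\mathbb{Z}$-linear independence statement, so to ensure that the $m-k$ generated translations together with the $\mathbb{R}^k$-factor inherited from $N$ actually span $\mathbb{R}^m$ geometrically at scale $\iota_0$, one must invoke the non-collapsing output of \cite[Proposition 5.9]{NaberZhang} and carefully track the dependence of $\iota_0$ on $\varepsilon,m,D,v$ so that the GH-error $\Psi'(\delta)$ can be made as small as required before applying Colding's theorem.
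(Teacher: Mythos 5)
Your overall route is the same as the paper's: localize via the harmonic radius of $N$, convert the hypothesis $b_1(M)-b_1(N)=m-k$ into maximality of the rank of the pseudo-local group $\widetilde{G}_{\delta_{Nil}}(p)$ at every $p$ (via the slid loops of \S 5 and Remark~\ref{rmk: homomorphism}, which use Theorem~\ref{thm: main3}), upgrade to almost-Euclidean geometry of the local universal cover by the Naber--Zhang theory, apply Colding's volume continuity on $(\widetilde{M},\pi^{\ast}g)$, and push the estimate to smaller radii by volume comparison.

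The one place where your write-up has a real soft spot is the dimension-upgrade step. You quote \cite[Proposition 5.9]{NaberZhang}, which only yields a \emph{definite} volume lower bound (non-collapsing constant $v_0>0$) for the local cover, and then assert that ``a standard compactness and splitting argument'' gives $d_{GH}\bigl(B_{\pi^{\ast}g}(\tilde p,\iota_0),\mathbb{B}^m(\iota_0)\bigr)\le \Psi'\iota_0$. That implication is not routine: a contradiction/compactness argument has to handle equivariant limits in which the base is only \emph{close} to $\mathbb{B}^k$ (not equal to it), has to show that the $m-k$ independent short translations force $m-k$ split lines in a non-collapsed limit, and has to produce an error that is quantitative and uniform in $m,D,v$ at a definite scale --- this is precisely the content of \cite[Proposition 5.4]{NaberZhang}, which is what the paper invokes (applied to the normal covering $\pi_p:\pi^{-1}(B(p,2))\to B(p,2)$ with the subgroup $\widetilde{G}_{\lambda_1\bar\iota_1}(p)$ of maximal rank). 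So your argument is repairable simply by citing that proposition in place of the sketched compactness step; as written, the crux of the lemma is deferred to an argument that would essentially re-prove it. Two smaller points: the paper does not need Proposition 5.9 at all in this proof, and your final step should be phrased with the correct monotone quantity --- under $\Rc\ge-(m-1)g$ it is $|B(r)|/V_{-1}^m(r)$, not $r^{-m}|B(r)|$, that is non-increasing, so one must first fix a small $r_1(\varepsilon)$ on which $V_{-1}^m(r)$ and $\omega_m r^m$ agree up to $1+10^{-2}\varepsilon$ (as the paper does) before propagating the almost-maximal volume down to all $r\in(0,r_{AE}]$.
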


\begin{proof}
Fixing $\varepsilon\in (0,10^{-1})$, we let $r_1=r_1(\varepsilon)\in
(0,1)$ be the constant such that
\begin{align}\label{eqn: choosing_r_0}
\forall r\in (0,r_1],\quad (1-10^{-2}\varepsilon)\omega_mr^m\ \le\
V_{-1}^m(r)\ \le\ (1+10^{-2}\varepsilon)\omega_mr^m,
\end{align}
where $V_{-1}^m(r)$ is the volume of geodesic $r$-ball in the space form of
sectional curvature equal to $-1$. 

By Colding's volume continuity theorem, \cite[Main Lemma 2.1]{Colding97}, we
obtain  the corresponding positive constants
$\delta_C=\delta_C(10^{-2}\varepsilon)<1$,
$\Lambda_C=\Lambda_C(10^{-2}\varepsilon)<1$ and
$R_C=R_C(10^{-2}\varepsilon)>1$ for $10^{-2}\varepsilon$.
We then put $\varepsilon':= r_1\Lambda_C\delta_C R_C^{-1}$ in \cite[Proposition
5.4]{NaberZhang} to obtain some uniform positive constant
$\delta_{NZ}(\varepsilon')<1$ and $r':=r_{NZ}(\varepsilon')\in
(\delta_{NZ}(\varepsilon'),1)$. On the other hand, by the uniform
$C^{1,\frac{1}{2}}$ harmonic radius lower bound for manifold $(N,h)\in
\mathcal{M}_{Rm}(k,D,v)$, there is some uniform constant
$\bar{\iota}_1\left(m,\varepsilon', \max_{0\le k\le m}C_{hr}(k,D,v)\right)\in
(0,\bar{\iota}_0]$ such that
\begin{align*}
\forall \bar{p}\in N,\quad d_{GH}\left(B_h(\bar{p},\bar{\iota}_1),
\mathbb{B}^k(\bar{\iota}_1) \right)\ <\
10^{-1}\lambda_1 \bar{\iota}_1.
\end{align*}
where $\lambda_1:=2^{-1}\min \left\{\delta_{NZ}(\varepsilon'),
\varepsilon_{NZ}(m)\right\}$ --- notice that $\lambda_1 \bar{\iota}_1\le
\delta_{Nil}$. Following the proof of Claim (1) of Theorem~\ref{thm: main1} in
the last section, we define (compare the definition of $\delta_1$ there)
 \begin{align*}
\delta_{AE}\ :=\  10^{-1}\min\left\{ 10^{-1}
\Psi_{NZ}\left(\min\left\{\bar{C}^{-1}
\Psi_{NZ}\left(2\lambda_1\bar{\iota}_1,1,m\right),\bar{r}\right\},1,m \right),
\lambda_1 \bar{\iota}_1,10^{-3}\bar{\iota}_{hr} \right\}
 \end{align*}
 where $\bar{C}(m,D,\beta)>1$ and $\bar{r}(m,D,\beta) \in (0,1)$ are the
 uniform constants obtained from Theorem~\ref{thm: main3} by setting
 $\beta=\min\left\{10^{-3}, (4D)^{-1}\right\}$, and as before, $\Psi_{NZ}$ is
 obtained from \cite[Lemma 5.2]{NaberZhang}.
 
 If $(M,g)$ and $(N,h)$ satisfy (1) and (2) in the assumption with the
 $\delta_{AE}$ just defined, we first understand the implication of the (2) on
 the nilpotency rank of the pseudo-local fundamental group at each point of $M$. 
 It is easily seen that the estimates (\ref{eqn: dle1}), (\ref{eqn: dge1'}) and
 (\ref{eqn: dge1''}) hold with $2\lambda_1 \bar{\iota}_1$ in place of
 $2\delta_{Nil}$ --- for any $p\in M$ and any $\tilde{p}\in \pi^{-1}(p)$, we
 have
  \begin{align*}
  d_{\pi^{\ast}g}\left(\gamma_i^{k_i''}.\tilde{p},\tilde{p}\right)\ \le\
  2\lambda_1\bar{\iota}_1\ \le\ 2\delta_{Nil}
  \end{align*}
 for each $\gamma_i$ ($i=1,\ldots,b_1(M)-b_1(N)$) obtained there as a
  torsion-free short generator of $H_1^{\delta}(M;\mathbb{Z})$, with some
 $k_i''\le 2N_{NZ}(m,D,\iota)$. In particluar, $\gamma_i^{k_i''}\in
 \widetilde{G}_{\lambda_1\bar{\iota}_1}(p)$ for each $i=1,\ldots,b_1(M)-b_1(N)$.
 Lemma~\ref{lem: nil_rank}, Proposition~\ref{prop: rank} and the proof of Claim
 (1) in Theorem~\ref{thm: main1} then lead to
\begin{align*}
b_1(M)-b_1(N)\ = H_1^{\delta}(M;\mathbb{Z})\ \le\ \rank\
\widetilde{G}_{\lambda_1\bar{\iota}_1}(p)\ \le\ \rank\
\widetilde{G}_{\delta_{Nil}}(p)\ \le\ m-k,
\end{align*} 
while assumption (2) forces 
\begin{align}\label{eqn: G_max_rank}
\rank\ \widetilde{G}_{\lambda_1 \bar{\iota}_1}(p)\ =\ m-k.
\end{align}

We now examine the effect of further assuming (1). By the choice of
$\delta_{AE}$ and $\bar{\iota}_1$, we have
\begin{align*}
\forall p\in M,\quad 
d_{GH}\left(B_g(p,\bar{\iota}_1),\mathbb{B}^k(\bar{\iota}_1)\right)\ \le\
d_{GH}\left(M,N\right) +d_{GH}\left(B_h(\Phi(p),\bar{\iota}_1),
\mathbb{B}^k(\bar{\iota}_1)\right)\ <\ \frac{2}{5}\lambda_1 \bar{\iota}_1.
\end{align*}
 Moreover, performing the rescaling $g\mapsto 4\bar{\iota}_1^{-2}g=:\bar{g}_1$,
 we see that 
\begin{align}\label{eqn: dGH1}
\forall p\in M,\quad d_{GH}\left(B_{\bar{g}_1}(p,2), \mathbb{B}^k(2) \right)\ <\
2\lambda_1.
\end{align}
On the other hand, fixing any lift $\tilde{p} \in \pi^{-1}(p)$, we could see as
in the proof of Lemma~\ref{lem: nil_rank} that 
\begin{align}\label{eqn: Gdelta}
\begin{split}
\widetilde{G}_{\lambda\bar{\iota}_1}(p)\ =\ &\left\langle
\gamma\in \pi_1(M,p):\ d_{\pi^{\ast}g}(\gamma.\tilde{p},\tilde{p})\le
2\lambda_1 \bar{\iota}_1\right\rangle\\
=\ &\left\langle \gamma\in \pi_1(M,p):\
d_{\pi^{\ast}\bar{g}_1}(\gamma.\tilde{p}, \tilde{p})\le\ 4\lambda_1
\right\rangle.
\end{split}
\end{align}
Since $\pi:(\widetilde{M},\tilde{p})\to (M,p)$ is a normal covering with deck
transformation group being $\pi_1(M,p)$, the same holds for the restriction
$\pi_p:\pi^{-1}(B_{\bar{g}_1}(p,2))\to B_{\bar{g}_1}(p,2)$. 
 
Applying \cite[Proposition 5.4]{NaberZhang} to the normal covering
$\pi_p: \pi^{-1}(B_{\bar{g}_1}(p,2))\to B_{\bar{g}_1}(p,2)$ and the subgroup
$\widetilde{G}_{\lambda_1 \bar{\iota}_1}(p)$ of the deck transformation group
$\pi_1(M,p)$, we conclude, thanks to (\ref{eqn: G_max_rank}), (\ref{eqn:
dGH1}), (\ref{eqn: Gdelta}) and the choice of
$\lambda<\delta_{NZ}(\varepsilon')$, that
\begin{align*}
d_{GH}\left(B_{\pi^{\ast}\bar{g}_1}(\tilde{p},r'), \mathbb{B}^m(r')\right)\le
\varepsilon' r'.
\end{align*}

We now further rescale the metric $\bar{g}_2:= \lambda_2^{-2}
\pi^{\ast}\bar{g}_1$ with
\begin{align}\label{eqn: choosing_lambda}
 \lambda_2(\varepsilon)\ :=\ \min\left\{r_1, \Lambda_C, r'R_C^{-1}\right\},
\end{align}
then for any $p\in M$ and any $\tilde{p}\in \pi^{-1}(p)$, we have
\begin{align*}
d_{GH}\left(B_{\bar{g}_2}(\tilde{p},R_C), \mathbb{B}^m(R_C)\right)\ <\
\delta_C,
\end{align*}
and we have the Ricci curvature lower bound 
\begin{align*}
\Rc_{\bar{g}_2}\ \ge\
-(m-1)\Lambda_C^2\bar{g}_2.
\end{align*} 
Consequently, applying \cite[Main Lemma 2.1]{Colding97} we have
\begin{align*}
\tilde{p}\in \widetilde{M},\quad 
\left|B_{\bar{g}_2}(\tilde{p},1)\right|_{\bar{g}_2}\ \ge\
(1-10^{-2}\varepsilon)\omega_m.
\end{align*}
By the volume ratio comparison (\ref{eqn: choosing_r_0}) we have
\begin{align}\label{eqn: gbar_vr}
\forall \tilde{p}\in \widetilde{M},\ \forall r\in (0,1],\quad
\left|B_{\bar{g}_2}(\tilde{p},r)\right|_{\bar{g}_2}\ \ge\
(1-\varepsilon)\omega_mr^m.
\end{align}
Notice the scaling invariance of the estimate.

Now we scale back to the original metric $\pi^{\ast}g$ and the estimate
(\ref{eqn: gbar_vr})
remain valid for geodesic balls centered anywhere in $\widetilde{M}$, with
radii not exceeding $\frac{1}{2}\bar{\iota}_1\lambda_2$.
By (\ref{eqn: choosing_lambda}) and the bound of $r'\ge \delta_{NZ}'$ in
\cite[Proposition 5.8]{NaberZhang}, we have
$\frac{1}{2}\bar{\iota}_1\lambda_2$ always bounded below by
\begin{align}\label{eqn: r_AE_lb}
 r_{AE}\ :=\ \frac{1}{2}\bar{\iota}_1(\varepsilon')
\min\left\{r_1, \Lambda_C, \delta_{NZ}(\varepsilon')R_C^{-1}\right\},
\end{align}
with $\Lambda_C$, $R_C$ and $\varepsilon'$ determined by
$10^{-2}\varepsilon$ via Colding's volume continuity theorem, and
$\varepsilon_{NZ}(m)$ described in the proof of Lemma~\ref{lem: nil_rank}.
Clearly, $\delta_{AE}$ and $r_{AE}$ are determined by $\varepsilon$, $m$, $D$
and $v$.
\end{proof}

With the help of this lemma, we could then apply Proposition~\ref{prop: CTY11}
to the rescaled covering flow $(\widetilde{M},\pi^{\ast}g(t))$ to bound the
existence time uniformly from below.
\begin{proof}[Proof of Theorem~\ref{thm: main2}]
Given $\alpha\in (0,10^{-2}m^{-1})$, let $\delta_P(\alpha)\in (0,1)$ be the
almost Euclidean threshold required in (\ref{eqn: TianWang}). Given $(M,g)$ and
$(N,h)$ as in the assumption, we know that $(M,g)$ is a closed Riemannian
manifold as it is complete with finite diameter. Therefore, Hamilton's
short-time existence result applies and there is a Ricci flow solution
$(M,g(t))$ for $t\in [0,T)$ with $g(0)=g$. For
$\delta<\delta_{RF}:=\delta_{AE}(\delta_P)$ (omitting the dependence on $m$,
$D$ and $\iota$), we consider the covering Ricci flow
$(\widetilde{M},\tilde{g}(t))$ with initial data $(\widetilde{M},\pi^{\ast}g)$.
Notice that the time slices of the covering flow are complete, and satisfy 
$\|\Rm_{\tilde{g}(t)}\|_{L^{\infty}(\widetilde{M},\tilde{g}(t))} = 
\|\Rm_{g(t)}\|_{L^{\infty}(M,g(t))} <\infty$ for any $t<T$, and by
Lemma~\ref{lem: almost_Euclidean} we have 
\begin{align*}
 \forall \tilde{p}\in \widetilde{M},\quad 
\left|B_{\pi^{\ast}g}(\tilde{p},r_{AE})\right|_{\pi^{\ast}g}\ \ge\ 
(1-\delta_P)\omega_mr_{AE}^m.
\end{align*}
Rescaling $g\mapsto r_{AE}^{-2}\delta_P^{-2}g=:\bar{g}$ and $t\mapsto
r_{AE}^{-2}\delta_P^{-2}t=:\bar{t}$, we could apply Proposition~\ref{prop:
CTY11} to the Ricci flow $(\widetilde{M},\bar{g}(t))$ and conclude that the
 flow exists at least up to $\bar{t}=\varepsilon_P^2(\alpha)$. Now scaling
 back, we see that the original Ricci flow exists up to
 $T>\varepsilon_{RF}^2:=\varepsilon_P^2r_{AE}^2\delta_P^2$, and (\ref{eqn:
main2}) follows directly from (\ref{eqn: pseudolocality_Rm_bound}). We notice
that both $\delta_{RF}$ and $\varepsilon_{RF}$ are solely determined by
$m$, $D$ and $v$, besides $\alpha$.  
\end{proof}

 In order to apply Theorem~\ref{thm: main2} as a smoothing tool, we need to
 keep track of the distance change by running the Ricci flow. We have
 the following distance distortion estimate, which is a rewording of
 \cite[Lemma 1.11]{HKRX18}:
\begin{lemma}[Distance distortion]\label{lem: dis_dis}
For any $\alpha\in (0,10^{-2}m^{-1})$, under the assumptions of
Theorem~\ref{thm: main2}, there is some $\Psi_D(\alpha|m)\in(0,1)$ with
$\lim_{\alpha\to 0}\Psi_D(\alpha|m)=0$, such that for any $t\in
(0,\varepsilon_P^2]$, and for any $x,y\in M$ with $d_g(x,y)\le
\sqrt{t}$, we have
\begin{align}\label{eqn: distance_distortion}
\left|d_{g(t)}(x,y)- d_g(x,y)\right|\ \le\
\Psi_D(\alpha|m)\sqrt{t}.
\end{align}
\end{lemma}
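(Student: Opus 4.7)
The plan is to convert the curvature bound $|\Rm_{g(t)}|_{g(t)} \le \alpha t^{-1} + \varepsilon_P^{-2}$ from Theorem~\ref{thm: main2}, which implies $|\Rc_{g(t)}|_{g(t)} \le (m-1)(\alpha t^{-1} + \varepsilon_P^{-2})$, into the stated distance distortion estimate. The argument is standard once the curvature bound is in hand (indeed, the result is taken from \cite[Lemma 1.11]{HKRX18}), and splits into an upper bound for $d_{g(t)}(x,y) - d_g(x,y)$ and a matching lower bound, each exploiting the bound in a different manner. The scale-invariant quantity $\tau |\Rm_{g(\tau)}| \le \alpha + \tau\varepsilon_P^{-2}$ is the source of the smallness as $\alpha \to 0$.

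For the upper bound, I take a unit-speed $g(0)$-minimizing geodesic $\gamma:[0,L]\to M$ from $x$ to $y$, where $L = d_g(x,y) \le \sqrt{t}$, and track its $g(\tau)$-length $L(\tau) := \int_0^L |\dot\gamma|_{g(\tau)}\,dl$. The Ricci flow equation gives
\begin{align*}
\partial_\tau L(\tau)\ =\ -\int_0^L \frac{\Rc_{g(\tau)}(\dot\gamma,\dot\gamma)}{|\dot\gamma|_{g(\tau)}}\,dl,\qquad |\partial_\tau L(\tau)|\ \le\ (m-1)\left(\frac{\alpha}{\tau} + \varepsilon_P^{-2}\right)L(\tau),
\end{align*}
so Gr\"onwall's inequality yields $L(t) \le L(\tau_0)\,(t/\tau_0)^{(m-1)\alpha}\,e^{(m-1)\varepsilon_P^{-2}(t-\tau_0)}$ for any $0 < \tau_0 \le t$. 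To handle the blow-up of $\alpha/\tau$ near $\tau=0$, I combine this with the initial Ricci lower bound $\Rc_{g(0)} \ge -(m-1)g(0)$ via a continuity/bootstrap argument to show $L(\tau_0) \le (1 + O(\sqrt{\tau_0}))\,L$ on some starter interval, then balance $\tau_0$ (of order $\alpha t$) against the two factors to conclude $L(t) - L \le \Psi_D(\alpha|m)\sqrt{t}$ with $\Psi_D \to 0$ as $\alpha \to 0$. Since $d_{g(t)}(x,y) \le L(t)$, this gives the upper bound.

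For the lower bound $d_g(x,y) - d_{g(t)}(x,y) \le \Psi_D\sqrt{t}$, direct differentiation along a moving minimizer fails because the Hessian of the distance blows up at the cut locus. I instead invoke Perelman's changing-distances estimate \cite[Lemma 8.3]{Perelman}: for a $g(\tau)$-minimizing geodesic between $x$ and $y$ of length at least $2r_0$, and with $|\Rc_{g(\tau)}| \le K$ on $B_{g(\tau)}(x,r_0)\cup B_{g(\tau)}(y,r_0)$,
\begin{align*}
\frac{d^-}{d\tau}d_{g(\tau)}(x,y)\ \ge\ -\frac{2(m-1)}{3}K\,r_0 - 2(m-1)\,r_0^{-1}
\end{align*}
in the barrier sense. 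Choosing $r_0 \sim \sqrt{\tau}$ makes the right-hand side scale like $-C_m\sqrt{\tau}^{\,-1}(\alpha + \tau\varepsilon_P^{-2})$, whose integral over $[0,t]$ is $O(\sqrt{t})$ with a prefactor tending to $0$ with $\alpha$; the case when the minimizer is shorter than $2r_0$ is trivially handled since then $d_{g(\tau)}(x,y) \le 2r_0 \lesssim \sqrt{\tau}$. Combining with the upper bound completes the proof.

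The main obstacle is the singular behaviour $|\Rc_{g(\tau)}| \sim \alpha/\tau$ near $\tau=0$, which prevents a naive Gr\"onwall-type multiplicative bound from closing (the factor $(t/\tau_0)^{(m-1)\alpha}$ diverges as $\tau_0\to 0$). The resolution is to use Perelman's additive estimate for one side and to match it with an $O(\sqrt{\tau_0})$ initial metric distortion coming from the \emph{initial} Ricci lower bound (not its preserved version) on the other side, then optimize the cut-off $\tau_0$ so that both contributions are controlled by a single $\Psi_D(\alpha|m)\sqrt{t}$.
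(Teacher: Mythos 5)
There is a genuine gap, and it sits exactly where the paper outsources the work. Your plan tries to deduce the estimate from the curvature bound (\ref{eqn: main2}) on the collapsed manifold $M$ itself, plus the $t=0$ Ricci lower bound. For the expanding direction this cannot close: the Gr\"onwall factor $\exp\bigl(\int_{\tau_0}^t (m-1)(\alpha\tau^{-1}+\varepsilon_P^{-2})\,d\tau\bigr)$ degenerates as $\tau_0\to 0$, and your proposed fix --- a ``continuity/bootstrap'' giving $L(\tau_0)\le (1+O(\sqrt{\tau_0}))L$ from $\Rc_{g(0)}\ge -(m-1)g(0)$ --- has no mechanism behind it: a Ricci lower bound is not preserved by the flow, and the initial bound controls $\partial_\tau g$ only at $\tau=0$, so nothing prevents a definite amount of expansion inside $[0,\tau_0]$. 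This is precisely the hard direction in the literature; it is the content of \cite[Lemma 1.11]{HKRX18} (and of \cite{Foxy1808}), and its proof uses the non-collapsed, almost-Euclidean initial condition, not just the curvature bound. Even your ``easy'' direction is not right as written: with $r_0\sim\sqrt{\tau}$ in Perelman's estimate the term $2(m-1)r_0^{-1}$ integrates to a \emph{dimensional} multiple of $\sqrt{t}$, with no $\alpha$ in front (your displayed scaling silently drops it), and because of the additive $\varepsilon_P^{-2}$ in (\ref{eqn: main2}) no choice of $r_0(\tau)$ helps: by AM--GM the integrand is at least $c\sqrt{K(\tau)}\ge c\,\varepsilon_P^{-1}$, so the integral over $[0,t]$ is at least $c\,t/\varepsilon_P$, which is of order $\sqrt{t}$ (not $\Psi_D(\alpha)\sqrt{t}$) when $t$ is comparable to $\varepsilon_P^2$. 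So the conclusion of Theorem~\ref{thm: main2} alone is simply too weak an input for the stated two-sided bound on all of $(0,\varepsilon_P^2]$.

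The missing idea is the one the paper actually uses: pass to the universal covering $(\widetilde M,\pi^{\ast}g(t))$, where Lemma~\ref{lem: almost_Euclidean} (this is where $b_1(M)-b_1(N)=m-k$ enters) guarantees the pseudo-locality hypotheses (\ref{eqn: TianWang}) at \emph{every} point after rescaling; there one can quote \cite[Lemma 1.11]{HKRX18}, whose hypotheses are exactly this non-collapsed setting, and then descend to $M$ because $\pi_1(M)$ acts by isometries of the lifted flow and $g(t)$ is the quotient flow, so $d_{g(t)}(x,y)=\min_{\gamma}d_{\pi^{\ast}g(t)}(\tilde x,\gamma.\tilde y)$ inherits the distortion bound. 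Your write-up never uses the cover or the first Betti number hypothesis at all, which is a sign that it is attempting to prove a stronger statement than is true from the ingredients you allow yourself.
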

\begin{proof}
As in the proof of Theorem~\ref{thm: main2}, we consider the universal covering
$\pi:\widetilde{M}\to M$ and we have a Ricci flow solution $\pi^{\ast}g(t)$ on
$\widetilde{M}$. Notice that for each $t\in[0,\varepsilon_P^2]$, the fundamental
group $\pi_1(M)$ acts on $(\widetilde{M},\pi^{\ast}g(t))$ by free and totally
discontinuous isometries and the Ricci flow $g(t)$ on $M$ is the quotient flow
$(\widetilde{M},\pi^{\ast}g(t))\slash \pi_1(M)$. Recall that
$(\widetilde{M},\pi^{\ast}g)$ satisfies the assumption of
Proposition~\ref{prop: CTY11} at every point, after suitable rescaling (making
$r_{AE}\mapsto 1$). By the scaling invariance, the original estimate in
\cite[Lemma 1.11]{HKRX18} descends to the flow $(M,g(t))$ and proves (\ref{eqn:
distance_distortion}).
\end{proof}

\subsection{Rigidity of the first Betti number}
With the help of Theorem~\ref{thm: main2} and Lemma~\ref{lem: dis_dis}, we now
prove Claim (2), the equality case, of Theorem~\ref{thm: main1}. Recalling that
by \cite[Theorem 2.6]{CFG92} and \cite[Theorem 2.2]{HKRX18}, we have some
dimensional constants $\varepsilon_{F}(m)\in (0,1)$ and $C_{F}(m)\ge 1$, such
that if an $m$-dimensional manifold $X$ with sectional curvature bounded by $1$
in absolute value is $(1,\varepsilon_{F})$-Gromov-Hausdorff close to a
$k$-dimensional ($k\le m$) manifold $Y$ with the same sectional curvature bound
and unit injectivity radius lower bound, then there is a fibration $F:X\to Y$,
which is also a $(2^{-1},C_{F}\varepsilon_{F})$-Gromov-Hausdorff approximation.
Here an $(r,\delta)$\emph{-Gromov-Hausdorff approximation} is a $\delta$-dense
map whose restriction to each geodesic $r$-ball is a $\delta$-Gromov-Hausdorff
approximation. We now pick the largest $\alpha_{B}(m,D,v)\in (0,10^{-2}m^{-1})$
so that $\Psi_{D}(\alpha_B|m)\le 4^{-1}\min\{\varepsilon_F,
C_F^{-1}\delta_{Nil}\}$ --- here the dependence of $\alpha_B$ on $D$ and $v$ is
due to $\delta_{Nil}(m,D,v)$, obtained in Lemma~\ref{lem: nil_rank}.

If $d_{GH}(M,N)<\delta_{RF}(\alpha_B)$ and $b_1(M)-b_1(N)=m-k$, then we could
run the Ricci flow with initial data $(M,g)$ by Theorem~\ref{thm: main2}, to
obtain a smoothing metric $g(T_B)$ with
$T_B:=\min\{\varepsilon_{RF}(\alpha_B)^2,\bar{\iota}_{hr}^2\}$,
satisfying $\left\|\Rm_{g(T_B)}\right\|_{L^{\infty}(M,g(T_B))} \le 2T_B^{-1}$. 
On the other hand, by Lemma~\ref{lem: dis_dis} we know that
$(M,2T_B^{-1}g(T_B))$ and $(M,2T_B^{-1}g)$ are
$(1,\Psi_D(\alpha_B))$-Gromov-Hausdorff close, meaning that the identity map
restricts to a $\Psi_D(\alpha_B)$-Gromov-Hausdorff approximation on any geodesic
unit ball in $(M,2T_B^{-1}g(T_B))$ or $(M,2T_B^{-1}g)$. Therefore, setting 
\begin{align*}
\delta_B(m,D,v)\ :=\ \frac{1}{10}\min \left\{\delta_{RF}(\alpha_B),
\varepsilon_{F} T_B^{\frac{1}{2}}, C_F^{-1}\delta_{Nil} \right\},
\end{align*}
we know that $(M,2T_B^{-1}g(T_B))$ and $(N,2T_B^{-1}h)$ are
$(1,\frac{1}{2}\min\{\varepsilon_F,2C_F^{-1}\delta_{Nil}\})$-Gromov-Hausdorff
close to each other, whenever $d_{GH}(M,N)=\delta<\delta_B$. Moreover, both
$(M,2T_B^{-1}g(T_B))$ and $(N,2T_B^{-1}h)$ have sectional curvature uniformly
bounded by $1$ in absolute value, and $(N,2T_B^{-1}h)$ has injectivity radius
everywhere bounded below by $1$. Now applying \cite[Theorem 2.2]{HKRX18}, we
obtain an infranil fibration $F:M\to N$, which is also a
$(2^{-1}, 2^{-1}\min\{C_F\varepsilon_F,\delta_{Nil}\})$-Gromov-Hausdorff
approximation. Especially, for any $p\in M$, the fiber $F_p$ has extrinsic
diameter $\diam_{2T_B^{-1}g(T_B)} F_p\le 2^{-1}\delta_{Nil}$. Consequently,
$\diam_{2T_B^{-1}g} F_p\le \delta_{Nil}$, and as $T_B<1$, we have $\diam_g
F_p\le \delta_{Nil}$ for any $p\in M$.

Notice that each $F$-fiber is diffeomorphic to an infranil manifold of dimension
$m-k$, and we are yet to check that the fibers are actually toral. We now fix an
arbitrary $p\in M$. By the fiber bundle structure $F:M\to N$ and the assumption
$\delta< \delta_B$, we know that $\widetilde{G}_{\delta_{Nil}}(p)\cong
\tilde{\Gamma}_{\delta_{Nil}}(p) \cong \pi_1(F_p,p)$, since $\diam_{g}F_p\le
\delta_{Nil}$ and the base $N$ is homotopically trivial at the scale
$\bar{\iota}_{hr} > 10 \delta_{Nil}$. On the other hand, by Remark~\ref{rmk:
homomorphism}, we know that the abelianization of
$\widetilde{G}_{\delta_{Nil}}(p)$ has rank at least $b_1(M)-b_1(N)$. We have
$\rank\ \widetilde{G}_{\delta_{Nil}}(p)\le m-k$ by Lemma~\ref{lem: nil_rank},
and by the structure of the finitely generated almost nilpotent groups we have
\begin{align*}
b_1(M)-b_1(N)\ \le\ \rank\ \widetilde{G}_{\delta_{Nil}}(p)\slash
\left([\pi_1(M,p),\pi_1(M,p)]\cap \widetilde{G}_{\delta_{Nil}}(p) \right)\ \le\
\rank\ \widetilde{G}_{\delta_{Nil}}(p)\ \le\ m-k.
\end{align*}
Now the assumption $b_1(M)-b_1(N)=m-k$ forces the almost nilpotent group
$\widetilde{G}_{\delta_{Nil}}(p)$ to have the same rank as its abelianization, 
which is the case only when $\widetilde{G}_{\delta_{Nil}}(p)$ is a finitely
generated abelian group. Therefore, we have shown that each fiber $F_p$ has
abelian fundamental group. As it is an infranil manifold, we thus know that $F_p$ is diffeomorphic to an $(m-k)$-torus $\mathbb{T}^{m-k}$.
Therefore, the proof of Claim (2) of Theorem~\ref{thm: main1} is complete. 

\begin{remarkin}\label{rmk: Hongzhi20}
	After finishing this work, a fiber bundle theorem for collapsing manifolds with the so-called ``local bounded Ricci covering geometry" appears in \cite{Hongzhi20}. While the collapsing manifolds in Theorem~\ref{thm: main1} with maximal first Betti number differences are shown through Sections 3 and 4 to satisfy the assumptions in \cite[Theorem 0.3]{Hongzhi20}, this theorem fails to provide the structure of the fibers. The proof of \cite[Theorem 0.3]{Hongzhi20} could neither be applied to the case when the collapsing limit is a singular orbifold, as done in \cite{HW20b}. We recently learn that an upcoming work of Rong \cite{Rong21} will provide an alternative proof of Theorem~\ref{thm: main1}, purely relying on techniques from metric Riemannian geometry and independent of the Ricci flow smoothing technique.
\end{remarkin}

\section{Further discussions}


The torus fibration structure in Theorem~\ref{thm: main1} is not the end of journey. 
Under what extra conditions can we simplify the topological structure of the collapsing manifolds? 
In fact, if  $\pi_1(N)=0$ and $b_1(M)=\dim M-\dim N$,  it is easy to see that $M\cong N\times \mathbb{T}^{m-k}$
 as smooth manifolds.  
 This can be done purely by a topological argument.   The general discussion of product structure under the assumptions in  Theorem~\ref{thm: main1}
 will appear elsewhere. 
 

On the other hand, it is natural to extend 
Theorem~\ref{thm: main1} for generic collapsing limit spaces --- we notice
that if $X$ is a compact Ricci limit space, the generalized first Betti number
$b_1(X)$ is well-defined; see \cite[Remark 7.22]{Honda}. In this direction, the
study of local Ricci bounded covering geometry pioneered by Rong
\cite{HKRX18} should provide useful tools, and the localization of the
short first homology group, as well as Theorem~\ref{thm: main2}, will be
inevitable; see also \cite{HW20b} for a local Ricci flow smoothing result for 
collapsing manifolds near lower-dimensional orbifold limits.

\subsection*{Acknowledgements.} 

We would like to thank Xiaochun Rong for enlightening discussions and his warm encouragement. 
The first named author thanks Song Sun for several valuable comments. 
The second named author is grateful to Xin Peng for reading the early version of the manuscript and pointing out a mistake in ``further discussions" and many typos. 
The second named author is partially supported by YSBR-001,  the General Program
of the National Natural Science Foundation of China (Grant No. 11971452) and a research fund of USTC.

\vspace{1 in}

\end{document}